\documentclass{amsart}
\usepackage{amsmath,amssymb,epic,graphicx,mathrsfs,enumerate,xcolor}
\usepackage{amsfonts}
\usepackage{amsthm}
\usepackage{amssymb}
\usepackage{latexsym}
\usepackage{longtable}
\usepackage{epsfig}
\usepackage{amsmath}
\usepackage{hhline}
\usepackage{float}
\restylefloat{table}


\input xy
\xyoption{all}

\setlength{\parskip}{1mm}

\newtheorem{teor}{Theorem}
\newtheorem{cor}{Corollary}
\newtheorem{prop}{Proposition}

\newtheorem{lemma}{Lemma}


         


%
%

\DeclareMathOperator{\Aut}{Aut}

\DeclareMathOperator{\soc}{soc}

\begin{document}

\title[]{On minimal coverings and pairwise generation of some primitive groups of wreath product type}

\author{Julia Almeida} 
\address{Departamento de Matem\'atica, Universidade de Bras\'ilia, Campus 
Universit\'ario Darcy Ribeiro, Bras\'ilia-DF, 70910-900, Brazil}
\email{julia\_aredes\_almeida@hotmail.com}

\author{Martino Garonzi}
\address{Departamento de Matem\'atica, Universidade de Bras\'ilia, Campus 
Universit\'ario Darcy Ribeiro, Bras\'ilia-DF, 70910-900, Brazil \newline
ORCID: https://orcid.org/0000-0003-0041-3131}
\email{mgaronzi@gmail.com}

\thanks{The authors acknowledge the support of Conselho Nacional 
de Desenvolvimento Cient\'ifico e Tecnol\'ogico (CNPq), PhD fellowship and Universal
- Grant number 402934/2021-0.}

\date{}


\keywords{Permutation group, Primitive group, Covering, Group generation}

\begin{abstract}
The covering number of a finite group $G$, denoted $\sigma(G)$, is the smallest positive integer $k$ such that $G$ is a union of $k$ proper subgroups. We calculate $\sigma(G)$ for a family of primitive groups $G$ with a unique minimal normal subgroup $N$, isomorphic to $A_n^m$ with $n$ divisible by $6$ and $G/N$ cyclic. This is a generalization of a result of E. Swartz concerning the symmetric groups. We also prove an asymptotic result concerning pairwise generation.
\end{abstract}

\maketitle

\section{Introduction}

In this paper, all groups are assumed to be finite. A covering of a group $G$ is a family of proper subgroups of $G$ whose union is $G$ and the covering number of $G$, denoted $\sigma(G)$, is the smallest size of a covering of $G$. This interesting invariant was introduced by J. Cohn in \cite{cohn} and it was later studied by many authors. In this paper, we focus our attention on a family of groups closely related to symmetric groups, so let us shortly recall what is known about $\sigma(S_n)$. It is easy to prove that $\sigma(S_3)=\sigma(S_4)=4$. In his paper, J. Cohn proved, among other things, that $\sigma(S_5)=16$. A. Abdollahi et al \cite{s6} proved that $\sigma(S_6)=13$. A. Mar\'oti \cite{marotisym} proved that $\sigma(S_n)=2^{n-1}$ for $n \geqslant 7$ odd and $n \neq 9$, L.-C. Kappe, D.  Nikolova-Popova and E. Swartz \cite{kappe} proved that $\sigma(S_8)=64$, $\sigma(S_9)=256$, $\sigma(S_{10})=221$, $\sigma(S_{12})=761$, R. Oppenheim and E. Swartz \cite{s14} proved that $\sigma(S_{14})=3096$ and E. Swartz \cite{Eric} proved that $\sigma(S_{18})=36773$ and gave a precise formula for $\sigma(S_n)$ when $n \geqslant 30$ is divisible by $6$, which coincides with the formula in our Theorem \ref{main_1} below setting $m=1$.

If $G$ is $2$-generated, the generating graph of $G$ is the simple graph whose vertices are the elements of $G$ and two vertices are connected by an edge if together they generate $G$. A clique of a simple graph is a complete subgraph and its clique number is the maximal size of a clique. We denote by $\omega(G)$ the clique number of the generating graph of $G$, in other words $\omega(G)$ is the maximal size of a subset $S$ of $G$ with the property that $\langle x,y \rangle = G$ whenever $x,y \in S$ and $x \neq y$. Since any proper subgroup of $G$ can contain at most one element of such a set $S$, we have $\omega(G) \leqslant \sigma(G)$. It is very natural to ask whether equality occurs for some families of groups, at least asymptotically. S. Blackburn \cite{Blackburn} proved that $\sigma(S_n)=\omega(S_n)$ if $n$ is odd and sufficiently large, and later L. Stringer \cite{Stringer} proved that $\omega(S_n)=\sigma(S_n)$ for all odd $n$ different from $9$ and from $15$, and that $\sigma(S_9) \neq \omega(S_9)$. It is not known wheter $\omega(S_{15})$ equals $\sigma(S_{15})$ or not. In a joint work with F. Fumagalli and A. Mar\'oti \cite{MFA}, the second author proved that $\omega(S_n)/\sigma(S_n)$ tends to $1$ when $n$ is even and tends to infinity. A. Lucchini and A. Mar\'oti \cite{lm} proved that $\sigma(G)=\omega(G)$ if $G$ is a solvable group of Fitting length at most $2$.

A group $G$ is called primitive if it admits a maximal subgroup with trivial normal core. The study of $\sigma(G)$ and $\omega(G)$ for primitive groups is crucial for the understanding of the general behaviour of these invariants. Indeed, for a general $G$, we have $\sigma(G)=\sigma(G/\Phi(G))$ and, if $G$ is $2$-generated, $\omega(G)=\omega(G/\Phi(G))$, where $\Phi(G)$ denotes the Frattini subgroup of $G$, and it is easy to see that $G/\Phi(G)$ is a subdirect product of primitive groups. Denote by $d(G)$ the minimal size of a set of generators of $G$. If $G$ is noncyclic and has a unique minimal normal subgroup, call it $N$, then \cite[Theorem 1.1]{Lucchini} implies that $d(G)=\max\{2,d(G/N)\}$. Moreover, in this case, if $\Phi(G)=\{1\}$ then $G$ is primitive. Since we are interested in $2$-generated groups, the first case to consider is the one in which $G/N$ is cyclic.

 Our objective in this paper is to continue the work started in \cite{MA} and \cite{Martino} concerning the covering number of specific families of primitive groups with nonabelian socle. Recall that the socle of a group $G$ is the subgroup generated by the minimal normal subgroups of $G$, and if $G$ is primitive with nonabelian socle $N$, then $N$ is the unique minimal normal subgroup of $G$ and it is a direct power $T^m$ of a nonabelian simple group $T$. Assume $G/N$ is cyclic and that $T$ is isomorphic to an alternating group $A_n$ with $n \geqslant 5$. If $T_1$ denotes the first direct factor of $N$, then the structure of $G$ is determined by the almost-simple group $X=N_G(T_1)/C_G(T_1)$, which has socle isomorphic to $T$, so it can be one of $A_n$ and $S_n$, if $n \neq 6$. If $X \cong A_n$, then $G$ is the wreath product $A_n \wr C_m$, where $C_m$ acts as an $m$-cycle. In this paper, we are interested in the case $X \cong S_n$. In this case, the structure of $G$ is as follows. 

Let $G = G_{n,m}$ be the semidirect product $A_n^m \rtimes \langle \gamma \rangle$ where $\gamma = (1, \ldots, 1, \tau) \delta \in S_n \wr S_m$, with $\tau = (1 \; 2)$ and $\delta = (1 \ldots  m)$. If $x_1,\ldots,x_m \in A_n$, we have
$$(x_1, \ldots, x_m )^{\gamma} = ({x_m}^{\tau}, x_1, \ldots, x_{m-1}).$$
In this paper, we establish the following result, generalizing the main result of \cite{Eric} about $\sigma(S_n)$, which corresponds to the case $m = 1$.

\begin{teor} \label{main_1}
Let $G = G_{n,m}$, for $n \geqslant 30$ divisible by $6$ and $m \geqslant 2$. Denote by $\alpha(x)$ the number of distinct prime factors of the positive integer $x$. Then
$$\sigma(G) = \alpha(2m) + \left( \frac{1}{2} \binom{n}{n/2} \right)^m + \sum_{i=1}^{n/3-1} \binom{n}{i}^m.$$
Moreover, $G$ has a unique minimal covering consisting of maximal subgroups.
\end{teor}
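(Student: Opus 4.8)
The plan is to follow the general strategy for covering-number computations of primitive groups: identify a large family of maximal subgroups that must appear in every covering (the "essential" maximal subgroups), show these already cover $G$, and then prove minimality via a counting/weight argument. Concretely, I would first recall the classification of maximal subgroups of $G = A_n^m \rtimes \langle\gamma\rangle$. By the structure of primitive groups of wreath product type (the $\gamma$ acts as an $m$-cycle twisted by the transposition $\tau$ on the first coordinate), the maximal subgroups of $G$ fall into a few families: (i) those arising from maximal subgroups of the "diagonal-type" quotient controlling the $S_n$-action on a single block — these pull back the intransitive subgroups $(S_i \times S_{n-i}) \cap$ (appropriate parity) and the imprimitive subgroup $(S_{n/2} \wr S_2)\cap A_n$-type stabilizer, raised to the $m$-th power and extended by $\gamma$; and (ii) those of the form $N_G(M)$ for $M$ running over a transversal, together with subgroups containing $\gamma$-conjugates that control the cyclic quotient, which contribute the $\alpha(2m)$ term (one maximal subgroup for each prime divisor of $|G/N| = 2m$, since $G/N \cong C_{2m}$).

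The three summands in the formula should then be matched to three families of maximal subgroups whose union is all of $G$: the term $\alpha(2m)$ counts the maximal subgroups containing $N$ (equivalently the maximal subgroups of the cyclic group $C_{2m}$, one per prime); the term $(\frac12\binom{n}{n/2})^m$ counts the "imprimitive-type" maximal subgroups coming from the $n/2$-subset-pair structure on each of the $m$ blocks; and $\sum_{i=1}^{n/3-1}\binom{n}{i}^m$ counts the "intransitive-type" maximal subgroups coming from choosing an $i$-subset in each block, where the bound $i \le n/3-1$ (rather than $i \le n/2$) is exactly the delicate point inherited from Swartz's $m=1$ analysis: for $n/3 \le i \le n/2$ the intransitive maximal subgroups are redundant because every permutation they would be needed to cover is already covered more efficiently by the imprimitive-type or primitive-type subgroups, so including them would not reduce the total. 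I would verify covering (every element of $G$ lies in one of these) by projecting to a single block: an element $g$ with nontrivial $\gamma$-part lies in a subgroup containing $\gamma$-conjugate data, while an element of $N = A_n^m$ is covered because in each coordinate a permutation in $A_n$ either fixes a set of size $\le n/3-1$ or its complement, or stabilizes a partition into two halves, or is covered by considerations handled in the $m=1$ case (this uses $6 \mid n$, which guarantees $n/2$ and $n/3$ are integers and makes the parity bookkeeping for $A_n$ versus $S_n$ consistent).

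For minimality and uniqueness, the plan is a weighting argument. Assign to each element $g \in G$ a "cost", and to each maximal subgroup $M$ the total cost of the elements it contains that are not contained in any other maximal subgroup of smaller index; then show that the elements covered \emph{only} by subgroups in our chosen families force every covering to contain at least one representative from each $\gamma$-coset-determined class and from each choice of subset/partition data per block. The key tool is that $n$-cycles (and more generally permutations with few cycles) in a single $A_n$-block lie in very few maximal subgroups of $S_n$ — by a result going back to Jordan and used by Maróti and Swartz, an $n$-cycle or an $(n-1)$-cycle lies in no intransitive or imprimitive subgroup — and their $m$-fold analogues together with suitable $\gamma$-twisted elements pin down each required maximal subgroup uniquely. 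Since the chosen covering consists of maximal subgroups and every covering must contain (a subgroup containing) each of these essential elements, and the essential maximal subgroups are pairwise incomparable, the covering is forced, giving both the exact value of $\sigma(G)$ and uniqueness.

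\textbf{The main obstacle} I anticipate is the redundancy analysis at the boundary $n/3 \le i < n/2$: showing that the intransitive-type maximal subgroups $\mathrm{Stab}_G(\text{$i$-subsets in each block})$ with $i$ in this range are genuinely unnecessary, i.e. can be deleted from any minimal covering and replaced (coordinate-wise) by imprimitive-type or primitive-type maximal subgroups without loss. In the $m=1$ case this is the technical heart of Swartz's argument (it is where the constant $30$ and the divisibility by $6$ enter, controlling when a product of an $i$-set-stabilizer's cosets is absorbed), and for general $m$ one must run this absorption argument simultaneously in all $m$ coordinates while keeping track of the $\gamma$-action permuting the coordinates and the transposition $\tau$ acting on the first block — so the combinatorial estimate $\binom{n}{i}^m$ versus $\binom{n}{n/2}^m \big/ 2$ and the primitive-subgroup count must be compared with the $m$-th powers amplifying small per-coordinate discrepancies. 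A secondary difficulty is bounding the contribution of maximal subgroups of "primitive type" (those projecting to a primitive maximal subgroup of $A_n$ or $S_n$ in some block): one must show, using the smallness of such primitive subgroups (they have order at most, say, $4^n$ by Praeger–Saxl, hence are far too small to cover a positive proportion of $A_n^m$), that they can never be forced into a minimal covering and never help, so they contribute $0$ to the formula; this is where the hypothesis $n \ge 30$ does most of its work on the lower-order terms.
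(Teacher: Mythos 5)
You correctly identify the covering $\mathcal{C}$ --- the $\alpha(2m)$ maximal subgroups containing the socle, the product-type subgroups built from $(S_{n/2}\wr S_2)\cap A_n$, and those built from $(S_i\times S_{n-i})\cap A_n$ for $1\leqslant i\leqslant n/3-1$ --- and the overall shape ``exhibit the covering, then prove minimality and uniqueness by a weighted counting of special elements'' is indeed what the paper does, via Lemma 3.1 of Swartz. But the mechanism you propose, both for the covering and for the lower bound, is per-coordinate, and that is the wrong mechanism. An element $(x_1,\ldots,x_m)\gamma$ lies in $N_G(M^{a_1}\times\cdots\times M^{a_m})$ if and only if the single twisted \emph{product} $x_1x_2\cdots x_m\tau$ lies in the appropriate conjugate of $N_{S_n}(M)$ (the remaining $a_j$ are then forced up to cosets of $N_{S_n}(M)$); there is no condition on the individual coordinates. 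Moreover the elements that actually need to be covered by the product-type subgroups are precisely those \emph{outside} $N=A_n^m$ whose image generates $G/N$; the elements of $N$, which your covering check concentrates on, are trivially covered by any socle-containing maximal subgroup. So your verification that $\mathcal{C}$ covers $G$ is aimed at the wrong elements and uses the wrong membership criterion.

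For the lower bound, your stated key tool --- that an $n$-cycle or an $(n-1)$-cycle lies in no intransitive or imprimitive subgroup --- is false: an $n$-cycle lies in $S_{n/2}\wr S_2$ (this is exactly why the paper uses the class of $n$-cycles to pin down the imprimitive two-block subgroups), and an $(n-1)$-cycle lies in $S_{n-1}\times S_1$. What is actually needed is, for each $i$, a conjugacy class $B_i$ of \emph{odd} permutations with carefully chosen cycle types (three parts, with lengths depending on parities of $i$ and $n/2$) whose members lie only in the subgroups of $\mathcal{F}_i$ and in no other member of $\mathcal{F}$, lifted to $G$ via the condition $x_1\cdots x_m\tau\in B_i$; and, separately, sets $\Pi_{0,r}$ built from $n$-cycles, $(n-2)$-cycles and (when $m$ is odd and $r=2$) a class of type $(p,n-p)$ with $p$ prime, to force each of the $\alpha(2m)$ normal maximal subgroups into every covering. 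You also omit the maximal subgroups of diagonal type $N_G(\Delta_\varphi)$, which are not ``primitive per block'' and require their own estimate (with a special argument when $m=2$, where one shows they meet none of the distinguished elements). Finally, your proposed absorption/replacement treatment of the intransitive subgroups with $n/3\leqslant i<n/2$ could at best show these are dispensable; it would not yield the uniqueness of the minimal covering, whereas in the paper these subgroups are handled uniformly by the weight bound $d(H)<1$ together with the fact that each distinguished element lies in a unique member of $\mathcal{C}$.
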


Let $G$ be the group in the above statement. Denote by $d(G)$ the minimal number of elements needed to generate $G$ and let $N$ be the socle of $G$. Since $N$ is the unique minimal normal subgroup of $G$ and $G/N$ is cyclic, the main Theorem of \cite{Lucchini} implies that $d(G)=2$. So it makes sense to consider $\omega(G)$. Two explicit generators of $G$ can be constructed as follows: let $x_1,x_2 \in A_n$ be such that $\langle x_1 \tau, x_2 \tau \rangle = S_n$. Then $\langle \alpha_1,\alpha_2 \rangle = G$ where $\alpha_i = (x_i,1,\ldots,1)\gamma$ for $i=1,2$. Our second result is the following.

\begin{teor} \label{main_2}
Set $G:=G_{n,m}$. For fixed $m \geqslant 2$, $\omega(G)$ is asymptotically equal to $\left( \frac{1}{2} \binom{n}{n/2} \right)^m$ for $n \to \infty$, $n$ even, and $\omega(G)/\sigma(G)$ tends to $1$ as $n \to \infty$, $n$ even.
\end{teor}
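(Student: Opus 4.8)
The plan is to prove Theorem~\ref{main_2} by sandwiching $\omega(G)$ between a lower bound coming from an explicit large clique in the generating graph and the upper bound $\omega(G) \leqslant \sigma(G)$ furnished by Theorem~\ref{main_1}, and then checking that both bounds are asymptotic to $\left(\frac12\binom{n}{n/2}\right)^m$. For the upper side, Theorem~\ref{main_1} already gives $\sigma(G) = \alpha(2m) + \left(\frac12\binom{n}{n/2}\right)^m + \sum_{i=1}^{n/3-1}\binom{n}{i}^m$, and since $\binom{n}{i} \leqslant \binom{n}{n/3}$ for $i \leqslant n/3-1$ while $\binom{n}{n/2}/\binom{n}{n/3}$ grows exponentially in $n$ (by Stirling, the ratio of the entropy exponents $1$ versus $H(1/3)/\log 2 < 1$ is strictly less than $1$), the whole sum $\sum_{i=1}^{n/3-1}\binom{n}{i}^m$ is negligible compared to $\left(\frac12\binom{n}{n/2}\right)^m$ once raised to the $m$-th power; the constant $\alpha(2m)$ is fixed. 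Hence $\sigma(G) \sim \left(\frac12\binom{n}{n/2}\right)^m$, so it suffices to produce a clique of that asymptotic size.

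For the lower bound I would construct the clique explicitly, mimicking the structure of the ``intransitive'' maximal subgroups $(S_{n/2} \times S_{n/2})$-type that account for the dominant term $\left(\frac12\binom{n}{n/2}\right)^m$ in the covering. Concretely, for each unordered partition $\{A, A^c\}$ of $\{1,\dots,n\}$ into two halves of size $n/2$, one picks inside $A_n$ an element $g_{\{A,A^c\}}$ whose cycle structure ``detects'' that partition — for instance an element with an $(n/2)$-cycle on $A$ and an $(n/2)$-cycle on $A^c$ (adjusted to lie in $A_n$, e.g. using the fact that $n/2$ is even since $6 \mid n$, so such a product is even), chosen generically. Then for $m$-tuples of such partitions one forms elements $(g_1,\dots,g_m)\gamma \in G$ and argues that two such elements generate $G$ unless they ``share'' the same partition in some coordinate. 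The standard tool here is the Lucchini--Maróti / crown-based machinery plus a probabilistic/character-theoretic input: one needs that a random pair of such elements generates $G$, which reduces (via the structure of $G = A_n^m \rtimes \langle\gamma\rangle$ and the fact that the relevant quotient is $S_n$ acting on each block) to showing the pair generates $S_n$ in the first coordinate and then lifts — using Hall's / Gaschütz-type arguments on the socle $A_n^m$ — to all of $G$. Counting: there are $\frac12\binom{n}{n/2}$ choices per coordinate, giving $\left(\frac12\binom{n}{n/2}\right)^m$ elements, and one shows all but a vanishing proportion of the resulting pairs are generating, then extracts a sub-clique of the claimed asymptotic size by a Turán-type / greedy argument (e.g. deleting one vertex from each non-edge).

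The main obstacle is the verification that a ``generic'' pair of the constructed elements $(g_1,\dots,g_m)\gamma$, $(h_1,\dots,h_m)\gamma$ genuinely generates $G$ and not merely a large subgroup. Two difficulties intertwine: first, the projection to $\langle\gamma\rangle$ is automatic but one must rule out the pair landing in a proper subgroup of wreath-product type or a diagonal-type subgroup of $A_n^m\rtimes\langle\gamma\rangle$; second, one must ensure the ``first-coordinate'' projections together with $\tau$ generate $S_n$ and that no imprimitive or primitive-but-proper overgroup in $S_n$ traps them — this is where one invokes known asymptotic generation results for $S_n$ (e.g. that a random pair of elements of prescribed large-support cycle type generates $A_n$ or $S_n$ with probability $1-o(1)$, combined with the classification of maximal subgroups of $S_n$ to eliminate the intransitive, imprimitive and primitive cases) and a counting estimate showing the ``bad'' pairs are $o\!\left(\left(\frac12\binom{n}{n/2}\right)^{2m}\right)$. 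A secondary technical point is choosing the $g_{\{A,A^c\}}$ so that distinct partitions give distinct (indeed non-conjugate-under-the-relevant-action) behaviour, guaranteeing that two clique elements sharing a partition in some coordinate are the \emph{only} obstruction — so that the clique, after the greedy cleanup, still has size $(1-o(1))\left(\frac12\binom{n}{n/2}\right)^m$. Once both bounds match asymptotically, $\omega(G)/\sigma(G)\to 1$ follows immediately.
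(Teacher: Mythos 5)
Your overall skeleton---upper bound from Theorem~\ref{main_1}, lower bound from a large clique supported on the family of maximal subgroups responsible for the dominant term---matches the paper's strategy in spirit, but the execution has two genuine gaps. First, the quantitative heart of the argument is missing, and the patch you propose does not work: knowing that all but a vanishing \emph{proportion} of pairs generate, i.e.\ that the number of bad pairs is $o(N^2)$ with $N=\left(\frac12\binom{n}{n/2}\right)^m$, and then ``deleting one vertex from each non-edge'' yields a clique of size $N-o(N^2)$, which is vacuous. To get $(1-o(1))N$ by deletion you need the number of bad pairs to be $o(N)$, i.e.\ a per-pair failure probability that is $o(1/N)=o(2^{-m(n-1)})$. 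Establishing a bound of that strength against every type of maximal overgroup---intransitive, primitive, imprimitive with $3$, $4$, or at least $5$ blocks, and the diagonal-type subgroups $N_G(\Delta_{\varphi})$, which you flag as a difficulty but never actually handle---is precisely the content of the paper's proof of Theorem~\ref{main_2}; there it is packaged via the Lov\'asz Local Lemma, choosing $g_H$ uniformly in $C(H)=H\cap\Pi$ for each $H$ in the conjugacy class $\mathcal{N}$ and verifying $P(E_v)\leqslant 1/(e(d+1))$ with $d<2^{m(n-1)+1}$, which is exactly the same $o(1/N)$ threshold you would need. Without those estimates (the analogues of Lemmas~\ref{cvj}--\ref{conjnm} and the six case analyses), the lower bound is not proved.

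Second, your clique is indexed by the wrong data. The dominant term $\left(\frac12\binom{n}{n/2}\right)^m$ counts the conjugates of $N_G(M^m)$ with $M=(S_{n/2}\wr S_2)\cap A_n$ maximal \emph{imprimitive} with two blocks (index $\frac12\binom{n}{n/2}$ in $A_n$), not the intransitive subgroups $S_{n/2}\times S_{n/2}$ (index $\binom{n}{n/2}$); accordingly the natural ``detecting'' elements are those $(x_1,\ldots,x_m)\gamma$ for which $x_1\cdots x_m\tau$ is an $n$-cycle, not elements of cycle type $(n/2,n/2)$ in each coordinate. More seriously, which product-type maximal subgroups of $G$ contain $(g_1,\ldots,g_m)\gamma$ is governed by the twisted partial products $g_1\cdots g_j$ and the total product $g_1\cdots g_m\tau$ (see the computation in Proposition~\ref{unico}), not by the cycle types of the individual $g_j$; so assigning one partition per coordinate through the cycle structure of $g_j$ does not control containment in the subgroups $N_G(M\times M^{a_2}\times\cdots\times M^{a_m})$, and your claim that ``sharing a partition in some coordinate'' is the only obstruction to generation does not follow from your construction. (A small further slip: $6\mid n$ does not force $n/2$ to be even---take $n=30$.) Your treatment of the upper bound, deducing $\sigma(G)\sim\left(\frac12\binom{n}{n/2}\right)^m$ from Theorem~\ref{main_1}, is correct and essentially what the paper does via \cite[Theorem 1 (3)]{Martino}.
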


Note that the second statement of the theorem follows from the first one using \cite[Theorem 1 (3)]{Martino}. It is interesting to ask whether $\omega(G)/\sigma(G)$ tends to $1$ when $G$ is a $2$-generated primitive group and $|G| \to \infty$. For an interesting example of a family of groups $G$ for which $\omega(G)/\sigma(G)$ tends to $0$, see \cite[Theorem 1.2]{lm}.

\section{Proof of Theorem \ref{main_1}} \label{section_main_1}

In this section, we prove Theorem \ref{main_1}. The strategy is to apply Lemma 3.1 of \cite{Eric}. Let us explain this here for the convenience of the reader. Assume $G$ is a finite group whose conjugacy classes of maximal subgroups are indexed by a set $I_G$. For $j \in I_G$, let $\mathcal{M}_j$ be the corresponding conjugacy class of maximal subgroups of $G$. Let $J$ be a subset of $I_G$ and assume that $\mathcal{C} = \bigcup_{j \in J} \mathcal{M}_j$ is a covering of $G$. Let $\Pi$ be a subset of $G$ closed under conjugation and denote by $\Pi_j$ the subset of $\Pi$ covered by the conjugacy class $\mathcal{M}_j$, so that $\Pi_j$ is closed under conjugation if $\Pi$ is closed under conjugation. If this is the case, and $M,M'$ are conjugate maximal subgroups of $G$ and $j \in J$, then $|M \cap \Pi|=|M' \cap \Pi|$ and $|M \cap \Pi_j| = |M' \cap \Pi_j|$. For a maximal subgroup $M$ of $G$ such that $M \not \in \mathcal{C}$, let
$$d(M) := \sum_{j \in J} \frac{|M \cap \Pi_j|}{|M_j \cap \Pi_j|}$$
where $M_j$ is any fixed member of $\mathcal{M}_j$. 
\begin{lemma}[Lemma 3.1 of \cite{Eric}] \label{EricLemma}
Assume that the following conditions hold for the covering $\mathcal{C}$ defined above.
\begin{enumerate}
    \item $x^g \in \Pi$, for all $x \in \Pi$ and $g \in G$, i.e. $\Pi$ is closed under conjugation.
    \item For every $\pi \in \Pi$, there is a unique member of $\mathcal{C}$ containing $\pi$.
    \item $d(H) < 1$ for every maximal subgroup $H$ of $G$ not in $\mathcal{C}$.
\end{enumerate}
Then $\mathcal{C}$ is a minimal covering of $G$, meaning that $\sigma(G)=|\mathcal{C}|$. Moreover $\mathcal{C}$ is the unique minimal covering of $G$ consisting of maximal subgroups.
\end{lemma}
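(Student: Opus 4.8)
The plan is to run a weighted double-counting argument over the conjugation-invariant set $\Pi$, exploiting that the pieces $\Pi_j$ ($j\in J$) partition $\Pi$. First I would reduce to coverings by maximal subgroups: any covering of $G$ by proper subgroups refines, without increase in size, to a covering by maximal subgroups (replace each member by a maximal overgroup). So it suffices to prove that every covering $\mathcal{D}$ of $G$ consisting of maximal subgroups satisfies $|\mathcal{D}|\geq|\mathcal{C}|$, with equality forcing $\mathcal{D}=\mathcal{C}$.

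Next I set up the combinatorics. By (1) each $\Pi_j$ is closed under conjugation (intersection of $\Pi$ with the conjugation-invariant set $\bigcup_{M\in\mathcal{M}_j}M$), and by (2) every $\pi\in\Pi$ lies in exactly one member of $\mathcal{C}$, hence in exactly one class $\mathcal{M}_j$ with $j\in J$; thus $\Pi$ is the disjoint union of the $\Pi_j$, $j\in J$. For $M\in\mathcal{M}_j$ one has $M\cap\Pi=M\cap\Pi_j$ directly from the definition of $\Pi_j$, and by (2) each $\pi\in\Pi_j$ lies in a unique member of $\mathcal{M}_j$, whence $|\Pi_j|=|\mathcal{M}_j|\cdot|M_j\cap\Pi_j|$; here we may assume $|M_j\cap\Pi_j|\neq 0$ for every $j\in J$, as this quantity already appears as a denominator in the definition of $d(M)$. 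Define a weight on $\Pi$ by $w(\pi):=1/|M_j\cap\Pi_j|>0$ when $\pi\in\Pi_j$. Then $\sum_{\pi\in\Pi}w(\pi)=\sum_{j\in J}|\Pi_j|/|M_j\cap\Pi_j|=\sum_{j\in J}|\mathcal{M}_j|=|\mathcal{C}|$.

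The crux is then a two-sided estimate of $\Sigma:=\sum_{H\in\mathcal{D}}\sum_{\pi\in H\cap\Pi}w(\pi)$. Since $\mathcal{D}$ covers $G\supseteq\Pi$ and $w$ is positive, $\Sigma\geq\sum_{\pi\in\Pi}w(\pi)=|\mathcal{C}|$. For the upper bound I bound the inner sum for each $H\in\mathcal{D}$: if $H\in\mathcal{C}$, say $H\in\mathcal{M}_j$ with $j\in J$, then $H\cap\Pi=H\cap\Pi_j$ has the same cardinality as $M_j\cap\Pi_j$ (conjugacy of $H$ and $M_j$ together with conjugation-invariance of $\Pi_j$), so the inner sum equals $|H\cap\Pi_j|/|M_j\cap\Pi_j|=1$; if $H\notin\mathcal{C}$ then $H$ is a maximal subgroup not in $\mathcal{C}$, and, as $\Pi=\bigsqcup_{j\in J}\Pi_j$, the inner sum equals $\sum_{j\in J}|H\cap\Pi_j|/|M_j\cap\Pi_j|=d(H)<1$ by (3). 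In either case the inner sum is $\leq 1$, so $\Sigma\leq|\mathcal{D}|$. Combining the two estimates gives $|\mathcal{C}|\leq|\mathcal{D}|$; applied to $\mathcal{D}=\mathcal{C}$ (which is a covering by hypothesis), this shows $\mathcal{C}$ has minimal size, i.e. $\sigma(G)=|\mathcal{C}|$. For uniqueness, let $\mathcal{D}$ be any minimal covering by maximal subgroups, so $|\mathcal{D}|=|\mathcal{C}|$ and all the inequalities above are equalities; in particular $\Sigma=|\mathcal{D}|$ with each of the $|\mathcal{D}|$ inner sums $\leq 1$ forces every inner sum to equal $1$. Since the inner sum is strictly less than $1$ whenever $H\notin\mathcal{C}$, we get $\mathcal{D}\subseteq\mathcal{C}$, and $|\mathcal{D}|=|\mathcal{C}|$ yields $\mathcal{D}=\mathcal{C}$.

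I expect the only genuine obstacle to be the bookkeeping: one must verify carefully that the $\Pi_j$ truly partition $\Pi$ and that a maximal subgroup lying in the class $\mathcal{M}_j$ meets $\Pi$ only inside $\Pi_j$ — both rest on the uniqueness hypothesis (2), and it is precisely this that makes $w$ additive over $\mathcal{C}$ with total weight $|\mathcal{C}|$ and inner sums exactly $1$ on members of $\mathcal{C}$. Everything else is elementary counting, together with the routine but necessary remarks that one may restrict to coverings by maximal subgroups and may assume each $|M_j\cap\Pi_j|$ is nonzero.
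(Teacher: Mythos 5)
Your argument is correct, and it is essentially the proof of this lemma: the paper itself does not reprove the statement but imports it from Swartz's article \cite{Eric}, whose Lemma 3.1 is established by exactly this weighted double-counting over $\Pi$ (weight $1/|M_j\cap\Pi_j|$ on $\Pi_j$, total weight $|\mathcal{C}|$, each maximal subgroup absorbing weight $1$ if it lies in $\mathcal{C}$ and weight $d(H)<1$ otherwise). Your explicit handling of the reduction to coverings by maximal subgroups, of the disjointness of the $\Pi_j$ via hypothesis (2), and of the implicit nonvanishing of the denominators $|M_j\cap\Pi_j|$ is all sound.
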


Let $G$ be the group defined in the statement of Theorem \ref{main_1}. We will construct $J$, $\Pi$ and $\mathcal{C}$ to apply Lemma \ref{EricLemma} to the group $G$. Using the language of \cite{Spagnoli}, which we will often refer to, $G$ is a primitive group of type 2, meaning that $G$ has a core-free maximal subgroup and it admits precisely one minimal normal subgroup, which is nonabelian: its socle, $N=A_n^m$. 

Let $I = \{ -1, 1, 2, \ldots, n/3 - 1 \}$. As in \cite{Eric}, we define collections $B_i$, $i \in I$, as follows. For simplicity of notation, let us denote by $[a_1,\ldots,a_k]$ the conjugacy class of $S_n$ corresponding to the elements of cycle structure $(a_1,\ldots,a_k)$, where of course the $a_i$'s are positive integers which sum to $n$. Set 
$$\begin{array}{ll}
B_{-1} := [n], \\
B_1 := [1, n/2 - 2, n/2 + 1] \\
B_2 := [2,n/2-1,n/2-1] & \mbox{if } n/2 \mbox{ is even,} \\
B_2 := [2,n/2-4,n/2+2] & \mbox{if } n/2 \mbox{ is odd,} \\
B_i := [i, (n-i-1)/2, (n-i+1)/2] & \mbox{if } i \mbox{ is odd, } 3 \leqslant i < n/3, \\
B_i := [i,(n-i)/2,(n-i)/2] & \mbox{if } i \mbox{ is even, } (n-i)/2 \mbox{ is odd, } 4 \leqslant i < n/3, \\
B_i := [i,(n-i)/2-1,(n-i)/2+1] & \mbox{if } i \mbox{ is even, } (n-i)/2 \mbox{ is even, } 4 \leqslant i < n/3.
\end{array}$$ 

Note that $B_i \cap A_n = \varnothing$ for all $i \in I$. We define the set $\Pi_i$ for all $i \in I$ as follows:
$$\Pi_i = \{ (x_1, x_2, \ldots , x_m) \gamma \in G\ :\ x_1 x_2 \ldots x_m \tau \in B_i \}.$$ 
Note that the sets $\Pi_i$ are pairwise disjoint as are the sets $B_i$.

Note that we did not define $\Pi_0$ yet. Rather than defining a unique $\Pi_0$, we will define several sets, which we will call $\Pi_{0,r}$, for every prime $r$ dividing $m$. For such $r$, let $D_1$ be the conjugacy class of $(n-2)$-cycles in $S_n$, and let $D_i$ be the conjugacy class of $n$-cycles in $S_n$ for $i=2,\ldots,r$. Let $\nu := (1 \ldots r)$. For all $\sigma \in \langle \nu \rangle$, let
$$\Pi_{0, r, \sigma} := \{ (x_1, \ldots , x_m) \gamma^r \in G\ :\ x_i x_{i+r} x_{i+2r} \ldots x_{i+m-r} \tau \in D_{\sigma(i)}\ \forall i =1,\ldots,r \}.$$
Assume that either $m$ is even or $r \neq 2$. We define
$$\Pi_{0,r} := \bigcup_{\sigma \in \langle \nu \rangle} \Pi_{0,r,\sigma}.$$
This is a disjoint union. Indeed, let $\sigma_1, \sigma_2 \in \langle \nu \rangle$ with $\sigma_1 \neq \sigma_2$ and assume by contradiction that there exists $(x_1, \ldots , x_m ) \gamma^r \in \Pi_{0, r, \sigma_1} \cap \Pi_{0, r, \sigma_2}$. Since $\sigma_1 \neq \sigma_2$ and $r$ is a prime, there exists $i \in \{1,\ldots,r\}$ such that $\sigma_1(i)=1$ and $j=\sigma_2(i) \neq 1$. Since $x_i x_{i + r} \ldots x_{i + m - r} \tau$ belongs to $D_{\sigma_1(i)} \cap D_{\sigma_2(i)} = D_1 \cap D_j$, we deduce that $D_1=D_j$, a contradiction. It follows that $$|\Pi_{0,r}| = r \cdot |A_n|^{m-r} \cdot \prod_{i=1}^r |D_i|.$$

Assume now that $m$ is odd. We will define $\Pi_{0,2}$. Consider the conjugacy class $C$ of $S_n$ consisting of the elements of cycle structure $(p,n-p)$ where $p$ is a fixed prime number such that $n/3 < p < 2n/3$. Note that $p$ exists by Bertrand's postulate. In this case, we define
$$ \Pi_{0,2} := \{ (x_1, \ldots , x_m) \gamma^2 \in G\ :\ x_1 x_3 \ldots x_m \tau \cdot x_2 x_4 \ldots x_{m-1} \tau \in C \} .$$
Note that 
$$|\Pi_{0,2}| = |A_n|^{m-1} \cdot |C|.$$
Let $J$ be the set of indices consisting of the elements of $I$ and the pairs $(0,r)$ where $r$ is a prime divisor of $2m$ and set $\Pi := \bigcup_{j \in J} \Pi_j$. The following proposition shows that every $\Pi_j$ is closed under conjugation, proving that condition (1) of Lemma \ref{EricLemma} holds. 

\begin{prop} \label{classedeconj_i}
For all $i \in I$, the sets $\Pi_i$, $i \in I$, and $\Pi_{0,r}$, $r$ any prime divisor of $2m$, are closed under conjugation.
\end{prop}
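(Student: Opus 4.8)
The plan is to verify conjugation-invariance by reducing everything to the cycle type of a single product of the coordinate entries, which is the quantity that actually appears in the definition of each $\Pi_j$. Concretely, for an element $g=(x_1,\ldots,x_m)\gamma^k \in G$ (with $k=1$ for $j\in I$ and $k\in\{r,2\}$ for $j=(0,r)$), I would isolate the ``monodromy'' element obtained by running around the cycle: for $\Pi_i$ it is $w(g):=x_1x_2\cdots x_m\tau \in S_n$, and for $\Pi_{0,r,\sigma}$ it is the tuple of elements $w_i(g):=x_ix_{i+r}\cdots x_{i+m-r}\tau$, $i=1,\ldots,r$ (and similarly the single product for $\Pi_{0,2}$ in the odd case). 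The key observation is that membership in $\Pi_j$ depends only on the $S_n$-conjugacy class(es) of these monodromy element(s), possibly up to the cyclic relabelling induced by $\langle\nu\rangle$.

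The main step is then to compute how $w(g)$ transforms under conjugation by an arbitrary element $h=(y_1,\ldots,y_m)\gamma^\ell \in G$. I would first reduce to two types of conjugators: (i) elements of the base group $N=A_n^m$, and (ii) the generator $\gamma$ itself, since these generate $G$. Conjugating $g=(x_1,\ldots,x_m)\gamma$ by $(y_1,\ldots,y_m)\in N$ replaces each $x_i$ by a conjugate-shifted version, and a short computation shows the product $x_1\cdots x_m\tau$ is replaced by its conjugate under $\tau^{-1}y_m^{-1}\tau \cdot(\text{stuff})$ — in any case by a genuine $S_n$-conjugate, so its $S_n$-class is preserved; since each $B_i$ is a full $S_n$-class, $\Pi_i$ is invariant. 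Conjugating by $\gamma$ cyclically permutes the coordinates and applies $\tau$ to one of them, which again only conjugates the monodromy word within $S_n$ (using that $\tau^2=1$ and that cyclic rotation of a product $x_1\cdots x_m$ conjugates it). For $\Pi_{0,r}$ the same analysis applies coordinate-block by coordinate-block: conjugation by $N$ conjugates each $w_i(g)$ inside $S_n$ (preserving each $D_{\sigma(i)}$ up to the fact that the $D_i$ are unions over the relevant classes), while conjugation by powers of $\gamma$ cyclically permutes the $r$ blocks, which is exactly absorbed by replacing $\sigma$ by $\sigma\nu^{\pm 1}\in\langle\nu\rangle$; hence the union $\bigcup_{\sigma}\Pi_{0,r,\sigma}$ is invariant even though the individual $\Pi_{0,r,\sigma}$ are merely permuted. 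The case $\Pi_{0,2}$ with $m$ odd is handled the same way, noting that $\gamma^2$-conjugation swaps the two sub-products $x_1x_3\cdots x_m\tau$ and $x_2x_4\cdots x_{m-1}\tau$, and that the class $C$ of type $(p,n-p)$ is symmetric under this swap because a product $ab$ is always $S_n$-conjugate to $ba$.

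I expect the main obstacle to be bookkeeping the $\tau$'s correctly: the word $w(g)$ involves exactly one $\tau$ (coming from $\gamma=(1,\ldots,1,\tau)\delta$), and one must track precisely where it lands after conjugation and verify it does not multiply out to an even permutation or otherwise fall out of $B_i$. This is why the remark ``$B_i\cap A_n=\varnothing$'' in the excerpt matters, and why $\tau=(1\,2)$ being an involution is used repeatedly. A secondary subtlety is the interaction between the cyclic group $\langle\nu\rangle$ of relabellings and the constraint that $D_1$ (the $(n-2)$-cycle class) is distinguished from $D_2=\cdots=D_r$ (the $n$-cycle class): I would check that $\gamma$-conjugation shifts the ``position of the $(n-2)$-cycle'' by exactly one block, matching the action of $\nu$, so that no $\Pi_{0,r,\sigma}$ is mapped outside the union. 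Once these two points are pinned down, invariance of each $\Pi_j$ follows, and hence condition (1) of Lemma~\ref{EricLemma} holds.
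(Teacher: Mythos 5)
Your proposal is correct and follows essentially the same route as the paper's proof: reduce to conjugation by the base group $A_n^m$ and by $\gamma$, verify that each monodromy word $x_i x_{i+r}\cdots x_{i+m-r}\tau$ gets replaced by an explicit $S_n$-conjugate (the paper computes the conjugators as $y_i$ and $\tau x_m^{-1}\tau$ respectively), and absorb the cyclic shift of blocks under $\gamma$-conjugation into the union over $\sigma\in\langle\nu\rangle$. The subtleties you flag (tracking the single $\tau$, and the position of the distinguished class $D_1$ shifting by exactly one block to match $\nu$) are precisely the points the paper's computation settles.
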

\begin{proof}
Fix $i \in I$. If $(x_1, \ldots , x_m) \gamma \in \Pi_i$, the element
\begin{align*}
((x_1, \ldots , x_m) \gamma)^{\gamma} = (\tau x_m \tau, x_1, \ldots , x_{m-1}) \cdot \gamma
\end{align*}
belongs to $\Pi_i$ because 
$$\tau x_m \tau \cdot x_1 \ldots x_{m-1} \cdot \tau = (x_1 \ldots x_m \tau)^{\tau x_m^{-1} \tau} \in B_i,$$
and if $(y_1, \ldots , y_m) \in A_n^m$, the element
$$ ((x_1, \ldots , x_m) \gamma)^{(y_1, \ldots , y_m)} =  (y_1^{-1} x_1 y_2, y_2^{-1} x_2 y_3 , \ldots, y_{m-1}^{-1} x_{m-1} y_m , y_m^{-1} x_m \tau y_1 \tau) \gamma$$
belongs to $\Pi_i$ because
$$y_1^{-1} x_1 y_2 \cdot y_2^{-1} x_2 y_3 \cdot \ldots \cdot y_{m-1}^{-1} x_{m-1} y_m \cdot y_m^{-1} x_m \tau y_1 \tau \cdot \tau = (x_1 \ldots x_m \tau)^{y_1} \in B_i .$$
Since $G$ is generated by $A_n^m$ and $\gamma$, this proves that $\Pi_i$ is closed under conjugation.

We now prove that $\Pi_{0,r}$ is closed under conjugation. The following argument can be applied to the case $r=2$ when $m$ is odd, so we will assume that either $m$ is even or $r \neq 2$. Let $(x_1,\ldots,x_m) \gamma^r \in \Pi_{0,r,\sigma}$. Note that
\begin{align*}
((x_1, \ldots , x_m) \gamma^r)^{\gamma} & =
(\tau x_m \tau, x_1, \ldots , x_{m-1}) \gamma^r
\end{align*}
and we have the following.
$$\begin{array}{l}
\tau x_m \tau x_r x_{2r} \ldots x_{m-r} \tau = (x_r x_{2r} \ldots x_{m-r} x_m \tau)^{\tau x_m^{-1} \tau} \in D_{\sigma(r)} = D_{\sigma \nu^{-1} (1)} \\
x_i x_{i+r} x_{i+2r} \ldots x_{i+m-r} \tau \in D_{\sigma(i)} = D_{\sigma \nu^{-1} (i+1)} \hspace{.5cm} \forall i=1,\ldots,r-1.
\end{array}$$
 
It follows that $((x_1,\ldots,x_m)\gamma^r)^{\gamma} \in \Pi_{0,r,\sigma \nu^{-1}} \subseteq \Pi_{0,r}$. 
 
For $(y_1, \ldots , y_m) \in A_n^m$ we have that $((x_1, \ldots , x_m) \gamma^r)^{(y_1, \ldots , y_m)}$ equals
\begin{align*}
(y_1^{-1} x_1 y_{r+1}, \ldots , y_{m-r}^{-1} x_{m-r} y_m , y_{m-r+1}^{-1} x_{m-r+1} \tau y_1 \tau, \ldots , y_m^{-1} x_m \tau y_r \tau ) \cdot \gamma^{r}
\end{align*}

Moreover, if $1 \leqslant i \leqslant r$,
$$y_i^{-1} x_i y_{r+i} \cdot y_{r+i}^{-1} x_{r+i} y_{2r+i} \cdot \ldots \cdot y_{m-r+i}^{-1} x_{m-r+i} \tau y_i \tau \cdot \tau 
= (x_i x_{i+r} x_{i+2r} \ldots x_{i+m-r} \tau)^{y_i}$$
belongs to $D_{\sigma(i)}$. This implies that $\Pi_{0,r}$ is closed under conjugation.
\end{proof}

For $i \in I$, $i \neq -1$, define $\mathcal{E}_i$ to be the set of maximal intransitive subgroups of $A_n$ whose orbits have size $i$ and $n-i$ and let $\mathcal{E}_{-1}$ be the set of maximal imprimitive subgroups of $A_n$ with $2$ blocks. Let $\mathcal{F}_i := \{N_{S_n}(M)\ :\ M \in \mathcal{E}_i\}$ for all $i \in I$, $\mathcal{E} := \bigcup_{i \in I} \mathcal{E}_i$ and $\mathcal{F} := \bigcup_{i \in I} \mathcal{F}_i$. Note that $\{A_n\} \cup \mathcal{F}$ is a covering of $S_n$, as observed in \cite{Eric}. By \cite[Lemma 5.2]{Eric}, for $n \equiv 0 \mod 6$, $n \geqslant 30$ and $i \in I$, the only subgroups in $\mathcal{F}$ that contain elements of $B_i$ are the ones belonging to $\mathcal{F}_i$, so that the elements of $\bigcup_{i \in I} B_i$ are partitioned by the subgroups in $\mathcal{F}$. Moreover $\mathcal{E}_i$ and $\mathcal{F}_i$ are conjugacy classes of subgroups of $S_n$ for all $i \in I$.

For $i \in I$, define $\mathcal{M}_i$ to be the set consisting of the subgroups of $G$ of the following type: $H = N_G(M^{a_1} \times \ldots \times M^{a_m})$ where $a_1,\ldots,a_m \in A_n$, $M \in \mathcal{E}_i$ and $N_{S_n}(M) \cap B_i \neq \varnothing$. By \cite[Proposition 1.1.44]{Spagnoli} and \cite{LPS}, $H$ is a maximal subgroup of $G$ supplementing the socle $N=A_n^m$ of $G$, moreover $H \cap N$ is conjugate to $M^m$ in $N$. It follows that $|H|=2m \cdot |M|^m$ and $H$ has $|G:H|=|A_n:M|^m$ conjugates in $G$. For every prime divisor $r$ of $2m$, set $\mathcal{M}_{0,r} = \{M_{0,r}\}$ where $M_{0,r}=A_n^m \rtimes \langle \gamma^r \rangle$ is a normal subgroup of $G$ of index $r$. Let $\mathcal{C}$ be the union of all the $\mathcal{M}_j$, for $j \in J$. The size of $\mathcal{C}$ equals the claimed value for $\sigma(G)$ in the statement of Theorem \ref{main_1}.

\begin{prop}
    $\mathcal{C}$ is a covering of $G$.
\end{prop}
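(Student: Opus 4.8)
The plan is to show that every element $g \in G$ lies in some member of $\mathcal{C}$, splitting into the two cases according to whether $g$ lies in the socle $N = A_n^m$ or not. For $g = (x_1,\ldots,x_m)\gamma^k \in G$ with $\gamma^k \notin N$, write $k = 2^a k'$ with $k'$ odd if $m$ is even, and more generally let $d = \gcd(k,m)$; one checks that $g$ is conjugate to an element of the normal subgroup $M_{0,r} = A_n^m \rtimes \langle \gamma^r \rangle$ whenever some prime $r \mid m$ divides $k$, so the only case not immediately absorbed by the $\mathcal{M}_{0,r}$ is when $\gcd(k,m)$ is coprime to $m$ — equivalently when $\langle \gamma^k \rangle = \langle \gamma \rangle$, i.e. $g$ generates the same $C_m$-part as $\gamma$ (and for $m$ even, the extra case $r=2$ and $k$ odd is handled by $\mathcal{M}_{0,2}$, while for $m$ odd the class $C$ of type $(p,n-p)$ enters). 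After reducing to $g \in N\gamma$, the relevant invariant is the ``norm'' $\pi(g) := x_1 x_2 \cdots x_m \tau \in S_n \setminus A_n$: a coset representative computation (as in Proposition \ref{classedeconj_i}) shows that $N_G(M^{a_1} \times \cdots \times M^{a_m}) \cap N\gamma$ covers exactly those $g$ with $\pi(g)$ lying in a suitable $S_n$-conjugate of $N_{S_n}(M)$.

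The key steps, in order. First I would record the coset decomposition $G = \bigsqcup_{k=0}^{m-1} N\gamma^k$ and observe that $\mathcal{M}_{0,r}$ covers $\bigcup_{r \mid k} N\gamma^k$ for each prime $r \mid m$; hence it remains to cover $N$ itself and the cosets $N\gamma^k$ with $\gcd(k,m)$ a unit mod $m$, plus the extra $r = 2$ coset structure when $m$ is odd. Second, for a coset $N\gamma^k$ with $\langle\gamma^k\rangle = \langle\gamma\rangle$: since raising to an appropriate power is a bijection $N\gamma^k \to N\gamma$ compatible with the covering (powers of elements of a subgroup stay in the subgroup), it suffices to cover $N\gamma$. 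Third, for $g = (x_1,\ldots,x_m)\gamma \in N\gamma$, set $s := \pi(g) = x_1\cdots x_m\tau \in S_n\setminus A_n$; since $\{A_n\}\cup\mathcal{F}$ covers $S_n$ (and $s\notin A_n$), some $N_{S_n}(M) \in \mathcal{F}_i$ contains $s$, with $i \in I$; then after a diagonal conjugation by an element of $N$ (which, by the computation in Proposition \ref{classedeconj_i}, transforms $\pi$ by $S_n$-conjugacy on the first coordinate) one arranges $g \in N_G(M^{a_1}\times\cdots\times M^{a_m})$ for suitable $a_j \in A_n$, and this subgroup lies in $\mathcal{M}_i$ provided $N_{S_n}(M)\cap B_i \neq \varnothing$ — which holds because $\mathcal{F}_i$ is a single conjugacy class and $B_i$ was defined so that its elements lie in members of $\mathcal{F}_i$. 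Fourth, cover $N = A_n^m$ itself: here all the $\mathcal{M}_{0,r}$ already contain $N$ since each $M_{0,r} \supseteq A_n^m$, so $N$ is trivially covered (this is why $\alpha(2m) \geqslant 1$ always contributes). Fifth, handle the parity bookkeeping: when $m$ is odd the prime $2$ still divides $2m$, and $\Pi_{0,2}$ together with $M_{0,2}$... wait — but $2 \nmid m$ means there is no normal subgroup of index $2$ of the form $A_n^m\rtimes\langle\gamma^2\rangle$ since $\langle\gamma^2\rangle = \langle\gamma\rangle$; instead $\gamma^2$ has order $m$, $\langle \gamma, \gamma^2\rangle$... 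I must be careful: when $m$ is odd, $\gamma$ has order $2m$ and $\gamma^2$ generates the index-$2$ subgroup $A_n^m\rtimes\langle\gamma^2\rangle$ which has order $m|A_n|^m$, and the cosets $N\gamma^k$ with $k$ even are covered by $M_{0,2}$, while the cosets with $k$ odd and $\gcd(k,m)>1$ are covered by the odd prime divisors of $m$, leaving only $N\gamma^k$ with $k$ odd and coprime to $m$ — reduce by powering to $N\gamma$ and proceed as before, except now $\pi(g)$ naturally splits as a product of two "norms" over odd/even indices, which is exactly why $\Pi_{0,2}$ is defined via the product of type $(p,n-p)$ in this case; however $C$-type elements of $\Pi_{0,2}$ are transitive, so the covering of the generic element of $N\gamma$ still comes from $\mathcal{M}_i$ as above, and $\mathcal{M}_{0,2}$ is only needed to handle the even-$k$ cosets.

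The main obstacle I anticipate is the bookkeeping in the transitive/mixed-index cases — precisely pinning down which coset of $N$ in $G$ is covered by which member of $\mathcal{C}$ when $\gcd(k,m) \neq 1$, because one must verify that conjugating $g$ into $M_{0,r}$ (equivalently, checking $\langle g^h \rangle \le M_{0,r}$ for some $h$) is automatic from $\gamma^k \in M_{0,r}$, i.e. that $M_{0,r}$ really contains the \emph{whole} coset $N\gamma^k$ and not just some conjugates — but this is clear since $M_{0,r} \trianglelefteq G$ contains $N$ and $\gamma^r$, hence $N\gamma^k \subseteq M_{0,r}$ whenever $r \mid k$. The genuinely delicate point is therefore the final ``generic'' case $g \in N\gamma$ (or its powering-images): one needs that the $S_n$-conjugate of $N_{S_n}(M)$ containing $\pi(g)$ can be realized as $N_{S_n}(M^a)$ with $a \in A_n$ rather than merely $a \in S_n$ — this uses that $\mathcal{E}_i$ is already a single $A_n$-conjugacy class (true for $n$ large by the structure of intransitive and imprimitive maximal subgroups of $A_n$), so that the $S_n$-conjugation can be adjusted by an element of the stabilizer to land in $A_n$. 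Once these two points are in place, combining the cases gives that $\mathcal{C}$ covers $G$.
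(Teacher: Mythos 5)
Your proposal is correct and follows essentially the same route as the paper: the subgroups $M_{0,r}$ absorb every coset $N\gamma^k$ with $\gcd(k,2m)>1$, powering (using $\langle g\rangle=\langle g^t\rangle$ for $t$ coprime to $o(g)$) reduces the remaining cosets to $N\gamma$, and the fact that $\mathcal{F}$ covers the odd permutations of $S_n$ produces a product-type member of $\mathcal{C}$ containing each $(x_1,\ldots,x_m)\gamma$. The only cosmetic difference is that the paper skips the conjugation step by writing down the subgroup $N_G(M\times M^{x_1}\times\cdots\times M^{x_1\cdots x_{m-1}})$ explicitly, which is the ``coset representative computation'' you defer to Proposition \ref{classedeconj_i}.
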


\begin{proof}
Let $g = (x_1,\ldots,x_m)\gamma^k \in G$ where $x_i \in A_n$ for all $i$. If $(k, m) \neq 1$ then $g$ belongs to one of the $\alpha(2m)$ subgroups of $G$ containing the socle, now suppose that $(k, m) = 1$. Since $\langle g \rangle = \langle g^t \rangle$ if $t$ is coprime to the order of $g$, we can assume that $k=1$. Since $\mathcal{F}$ is a covering of $S_n$, there exists $M \in \mathcal{E}$ such that the odd permutation $x_1 \ldots x_m \tau$ belongs to $N_{S_n}(M)$ and
$$H = N_G(M \times M^{x_1} \times M^{x_1x_2} \times \ldots \times M^{x_1 \ldots x_{m-1}})$$
is a member of $\mathcal{C}$ containing $g$. 
\end{proof}

\begin{prop} \label{M_i}
The sets $\mathcal{M}_i$, $i \in I$, are conjugacy classes of subgroups of $G$.
\end{prop}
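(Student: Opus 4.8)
The plan is to show that any two members of $\mathcal{M}_i$ are conjugate in $G$, and conversely that every $G$-conjugate of a member of $\mathcal{M}_i$ is again in $\mathcal{M}_i$. The second part is essentially immediate from the definition: conjugating $H = N_G(M^{a_1} \times \ldots \times M^{a_m})$ by $(b_1,\ldots,b_m) \in N$ replaces each $M^{a_j}$ by $M^{a_j b_j}$, and conjugating by $\gamma$ cyclically permutes the factors and applies $\tau$ to one of them; since $\mathcal{E}_i$ is a conjugacy class of $S_n$ closed under the $S_n$-action, $M^\tau \in \mathcal{E}_i$, and since $N_{S_n}(M^\tau) = N_{S_n}(M)^\tau$ still meets $B_i$ (because $B_i$ is a union of $S_n$-classes, hence $\tau$-invariant), the conjugate is again of the prescribed form. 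So $\mathcal{M}_i$ is a union of $G$-conjugacy classes.

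For the forward direction, I would first reduce to understanding $H \cap N = M^{a_1} \times \ldots \times M^{a_m}$. Given two members $H, H'$ of $\mathcal{M}_i$ with socle intersections $M^{a_1} \times \ldots \times M^{a_m}$ and $(M')^{a_1'} \times \ldots \times (M')^{a_m'}$, use that $M, M' \in \mathcal{E}_i$ are $S_n$-conjugate, say $M' = M^s$ for some $s \in S_n$. If $s$ can be taken in $A_n$, then conjugating $H$ by a suitable element of $N$ followed by (if necessary) a diagonal element makes the two socle intersections equal, and then $H = N_G(H \cap N) = N_G(H' \cap N) = H'$ by maximality (both being the full normalizer in $G$). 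The subtlety is when $M$ and $M'$ are conjugate in $S_n$ but not in $A_n$; here I would invoke the condition $N_{S_n}(M) \cap B_i \neq \varnothing$ built into the definition of $\mathcal{M}_i$. Pick an odd permutation $t \in N_{S_n}(M)$ (it exists because $B_i$ consists of odd permutations — each $B_i \cap A_n = \varnothing$ by the remark in the text); then $M = M^t$, so replacing $s$ by $st$ if needed we may assume $s \in A_n$. Thus the fusion obstruction is killed precisely by the hypothesis $N_{S_n}(M) \cap B_i \neq \varnothing$, and this is where that clause in the definition earns its keep.

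The remaining point is to turn "$M^{a_1} \times \ldots \times M^{a_m}$ and $M^{a_1'} \times \ldots \times M^{a_m'}$ are $N$-conjugate" into an honest statement: conjugating by $(b_1,\ldots,b_m)$ sends the first to $M^{a_1 b_1} \times \ldots \times M^{a_m b_m}$, and we need each $M^{a_j b_j} = M^{a_j'}$, i.e. $a_j b_j \in N_{S_n}(M) a_j'$; since we have arranged that $a_j, a_j' \in A_n$ and may choose the coset representative of $N_{S_n}(M)$ inside $A_n \cap N_{S_n}(M) = N_{A_n}(M)$ (using again an odd element of $N_{S_n}(M)$ to adjust parity), such $b_j \in A_n$ exists. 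Then $H$ and $H'$ have the same socle intersection and hence coincide, being the $G$-normalizers thereof. I expect the main obstacle to be bookkeeping the parity/fusion issue cleanly — making sure that at each step where one wants an element of $S_n$ one can actually find it in $A_n$, uniformly using the single fact that $N_{S_n}(M)$ contains an odd permutation — rather than any deep structural input, since maximality of $H$ and the description of $H \cap N$ are already supplied by \cite[Proposition 1.1.44]{Spagnoli} and \cite{LPS}.
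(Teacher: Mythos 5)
Your argument is correct and follows essentially the same route as the paper: reduce every member of $\mathcal{M}_i$ to an $A_n^m$-conjugate of $N_G(M^m)$, and absorb both the $S_n$-versus-$A_n$ fusion of the base groups and the $\tau$-twist introduced by conjugation with $\gamma$ using the single fact that $N_{S_n}(M)$ contains an odd permutation (you extract it from $N_{S_n}(M)\cap B_i\neq\varnothing$ together with $B_i\cap A_n=\varnothing$; the paper phrases it as $N_{S_n}(M)A_n=S_n$). Your write-up is more detailed on the parity bookkeeping but structurally identical.
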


\begin{proof}
A given subgroup in $\mathcal{M}_i$ is $A_n^m$-conjugate to $H = N_G(M^m)$ where $M \in \mathcal{E}_i$, so since every member of $\mathcal{E}_i$ is an $A_n$-conjugate of $M$, being $N_{S_n}(M)A_n=S_n$, we only need to show that $H^{\gamma} = N_G(M^{\tau} \times M^{m-1})$ is $A_n^m$-conjugate to $H$. This follows from the fact that $M^{\tau}$ is $A_n$-conjugate to $M$, being $N_{S_n}(M)A_n = S_n$.
\end{proof}

We will now describe the maximal subgroups of $G$. A reference for the following discussion is \cite{Spagnoli}. The maximal subgroups of $G$ containing the socle $N=\soc(G)$ are the $M_{0,r}$ where $r$ is any prime divisor of $2m$. Let $U$ be a maximal subgroup of $G$ not containing $N$, so that $UN=G$. Observe that $U \cap N \neq \{1\}$. Indeed, if by contradiction $U \cap N = \{1\}$, then $U \cong G/N$ would be cyclic, generated by an element $u$, therefore the only proper subgroup of $G$ containing $u$ would be $U$, and this contradicts the fact that $\mathcal{C}$ is a covering of $G$ whose members are not cyclic. Then $U$ can be of one of the following two types. The first type is $U = N_G(U \cap N)$ where $U \cap N = M \times M^{a_2} \times \ldots \times M^{a_m}$, $a_2,\ldots,a_m \in A_n$ and $M$ is the intersection between $A_n$ and a maximal subgroup of $S_n$. In this first case, $U$ is called a maximal subgroup of product type 
(see \cite[Proposition 1.1.44, Definition 1.1.45]{Spagnoli}). The second type consists of maximal subgroups of diagonal type (see \cite[Proposition 1.1.55]{Spagnoli}). Fix a partition $\{P_1,\ldots,P_k\}$ of $\Omega=\{1,\ldots,m\}$ and write $P_i=\{a_{ij}\ :\ j=1,\ldots,r_i\}$. Given a collection of automorphisms $\varphi_{ij}$ of $A_n$, with $i=1,\ldots,k$ and $j=2,\ldots,r_i$, let $\Delta_{\varphi}$ be the set of $m$-tuples $(x_1,\ldots,x_m) \in A_n^m$ with the property that $x_{a_{ij}} = x_{a_{i1}}^{\varphi_{i,j}}$ for all $i,j$. Then we set $U$ to be the normalizer of $\Delta_{\varphi}$ in $G$. If $U$ supplements $N$, there is in $U$ an element $u = (y_1,\ldots,y_m) \delta$ where each $y_i$ belongs to $S_n$ and it is easy to see that the partition $P$ is stabilized by $\delta$. If $U$ is a maximal subgroup of $G$, then we may assume that $P$ is minimal, with respect to the relation of refinement, among the nontrivial partitions stabilized by $\delta$, in other words 
$$\Delta_{\varphi} = \{ (y_1, \ldots, y_{m/t}, y_1^{\varphi_{1,2}} , \ldots , y_{m/t}^{\varphi_{m/t, 2}} , \ldots , y_1^{\varphi_{1, t}} , \ldots , y_{m/t}^{\varphi_{m/t, t}} ) :\ y_1, \ldots , y_{m/t} \in A_n\}$$
where $t$ is a prime divisor of $m$, $\varphi_{i, j}$ is an automorphism of $A_n$ for $1 \leqslant i \leqslant m/t$, $2 \leqslant j \leqslant t$, and the matrix $(\varphi_{i, j})_{i,j}$ is denoted by $\varphi$. If $U$ is a maximal subgroup of $G$, supplementing the socle $N$, and of the form $N_G(\Delta_{\varphi})$ with $\varphi$ as above then $U$ is called a maximal subgroup of diagonal type. If this is the case, then $U \cap N = \Delta_{\varphi}$.

In the following discussion, we fix a subgroup $M$ of $A_n$ such that $N_G(M^m)$ is a maximal subgroup of $G$ which supplements the socle $N=\soc(G)$, in other words $N_G(M^m)N=G$.

\begin{lemma} \label{ncontido}
$N_{S_n}(M)A_n=S_n$, in particular $N_{S_n}(M) \nsubseteq A_n$.
\end{lemma}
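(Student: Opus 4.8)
\textbf{Proof proposal for Lemma \ref{ncontido}.}

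The plan is to argue by contradiction: assume $N_{S_n}(M) \subseteq A_n$, equivalently $N_{S_n}(M) = N_{A_n}(M)$, and derive that $N_G(M^m)$ cannot supplement the socle $N = A_n^m$. The key observation is that the quotient $G/N$ is generated by the image of $\gamma$, and $\gamma$ acts on the direct factors of $N$ by twisting the first coordinate by $\tau = (1\,2)$ and cyclically permuting. So any element of $G \setminus N$ has the shape $(z_1,\ldots,z_m)\gamma^k$ with $z_i \in A_n$, and conjugating $M^m$ by such an element involves, among other things, conjugating one of the copies of $M$ by something of the form $\tau z$ with $z \in A_n$, i.e. by an odd permutation. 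For $N_G(M^m)$ to normalize $M^m$ — hence for it to contain an element outside $N$ that normalizes $M^m$ — we would need this odd conjugation to land back inside the normalizer of $M$ in $S_n$; but if $N_{S_n}(M) \subseteq A_n$ there is no odd permutation normalizing $M$, so no element of $G \setminus N$ can normalize $M^m$.

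More precisely, I would compute $N_G(M^m) \cap N$ and $N_G(M^m)$ directly. An element $(z_1,\ldots,z_m) \in N$ normalizes $M^m$ iff each $z_i \in N_{A_n}(M)$, so $N_G(M^m) \cap N = N_{A_n}(M)^m$. Now suppose $g = (z_1,\ldots,z_m)\gamma^k \notin N$ normalizes $M^m$; using the formula $(x_1,\ldots,x_m)^{\gamma} = (x_m^\tau, x_1,\ldots,x_{m-1})$ iterated $k$ times, conjugation by $\gamma^k$ permutes the factors and introduces exactly one factor of $\tau$ (since $\gamma \in S_n \wr S_m$ has $\tau$ in a single coordinate and $(k,m)$ need not be $1$, but $\tau^2 = 1$ means the number of $\tau$'s picked up over a full cycle is exactly one when $\gcd(k,m)=1$, and in general one per orbit of $\delta^k$ on $\Omega$ — I will need to handle this combinatorially). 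The upshot is that conjugation by $g$ sends $M$ (in some coordinate) to $M^{\tau w}$ for a suitable $w \in A_n$ built from the $z_i$, and this must equal $M$, forcing $\tau w \in N_{S_n}(M)$, i.e. $\tau \in N_{S_n}(M) A_n$. Since $w \in A_n$, this gives an odd element of $N_{S_n}(M)$, contradicting $N_{S_n}(M) \subseteq A_n$. Hence $N_G(M^m) \subseteq N$, so $N_G(M^m)N = N \neq G$, contradicting the hypothesis that $N_G(M^m)$ supplements $N$.

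The main obstacle I anticipate is bookkeeping the action of $\gamma^k$ on the coordinates of $N$ and tracking precisely how many (and which) coordinates pick up a factor of $\tau$, so as to be sure that at least one coordinate genuinely acquires an odd twist that cannot be absorbed into $A_n$. This is the place where the structure of $\gamma$ as $(1,\ldots,1,\tau)\delta$ with $\delta = (1\,\ldots\,m)$ matters: one should verify that for any $k$ with $\gamma^k \notin N$, the "product around each $\delta^k$-orbit" contributes an odd total permutation in at least one orbit, which is true because $\tau$ appears in a single coordinate and each orbit of $\langle \delta^k \rangle$ that contains that coordinate accumulates exactly one $\tau$. Once this combinatorial point is nailed down, the rest is the short contradiction above. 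The final sentence "in particular $N_{S_n}(M) \nsubseteq A_n$" is then immediate, and one also records the consequence $N_{S_n}(M)A_n = S_n$ since $A_n$ has index $2$ in $S_n$.
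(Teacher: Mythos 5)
Your overall strategy --- assume $N_{S_n}(M) \subseteq A_n$ and deduce that $N_G(M^m)$ would then be contained in the socle $N$, contradicting the hypothesis $N_G(M^m)N = G$ --- is viable and genuinely different from the paper's argument. The paper proves the equality $N_{S_n}(M)A_n = S_n$ directly: for $\alpha \in S_n \setminus A_n$ the element $(\alpha,\ldots,\alpha) = (\alpha\tau,\ldots,\alpha\tau)\gamma^m$ lies in $G$, so by the supplement hypothesis it factors as $(a_1,\ldots,a_m)h$ with $(a_1,\ldots,a_m) \in A_n^m$ and $h \in N_G(M^m)$; since the left-hand side has trivial coordinate permutation, so does $h$, whence each component of $h$ normalizes $M$ and $\alpha = a_1 b_1 \in A_n N_{S_n}(M)$. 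Your contrapositive version reaches the same conclusion at comparable cost, and your closing reduction (the displayed equality follows from $N_{S_n}(M) \nsubseteq A_n$ because $N_{S_n}(M)A_n$ is a subgroup properly containing the index-two subgroup $A_n$) is correct.

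There is, however, an error in the combinatorial claim you propose to lean on. It is not true that $\gamma^k$ ``introduces exactly one factor of $\tau$,'' nor that the product of twists around each orbit of $\delta^k$ accumulates exactly one $\tau$. Writing $\gamma^k = (t_1,\ldots,t_m)\delta^k$ inside $S_n \wr S_m$, exactly $\min(k,\,2m-k)$ of the $t_i$ equal $\tau$ when $0 < k < 2m$; and the product of twists around the $\langle\delta^k\rangle$-orbit through the distinguished coordinate accumulates $k/\gcd(k,m)$ copies of $\tau$, which can be even --- for $m=6$, $k=4$ each of the two orbits picks up two $\tau$'s, so the ``orbit product'' test detects nothing and that version of the argument fails. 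Fortunately you do not need orbit products: conjugating coordinate-by-coordinate, for $1 \leqslant k \leqslant m$ one has
$$(M\times\cdots\times M)^{(z_1,\ldots,z_m)\gamma^k} = M^{z_{m-k+1}\tau}\times\cdots\times M^{z_m\tau}\times M^{z_1}\times\cdots\times M^{z_{m-k}},$$
and similarly for $m<k<2m$, so $(M^m)^g$ always has $\min(k,2m-k)\geqslant 1$ coordinates of the form $M^{z_i\tau}$ with $z_i\tau$ odd; equating with $M^m$ puts an odd element into $N_{S_n}(M)$, which is exactly the contradiction you want. Replace the orbit-counting by this direct computation and your proof goes through.
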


\begin{proof}
Let $\alpha \in S_n$. If $\alpha \in A_n$ then $\alpha \in N_{S_n}(M)A_n$, so now assume that $\alpha \not \in A_n$. Then $\alpha \tau \in A_n$ and so $(\alpha,\ldots,\alpha) = (\alpha \tau, \ldots , \alpha \tau)(\tau , \ldots , \tau) \in G$, being $(\tau,\ldots,\tau)=\gamma^m$. By assumption, we can write $(\alpha, \ldots , \alpha) = n h$, where $n = (a_1, \ldots , a_m) \in A_n^m$ and $h = (b_1, \ldots , b_m) \gamma^k \in N_G(M^m)$. It follows that $k=0$ and hence $b_i \in N_{S_n}(M)$ for all $i=1,\ldots,m$. Therefore $\alpha = a_1b_1 \in A_n N_{S_n}(M)$. 
\end{proof}

\begin{lemma} \label{pi_0}
Let $g \in G$ and let $r$ be a prime divisor of $2m$. If either $m$ is even or $r \neq 2$, then
$$|N_G(M^m)^g \cap \Pi_{0,r} |  = r \cdot \displaystyle \left( \frac{1}{2} \; | N_{S_n}(M)| \right)^{m-r}  \cdot \prod_{i = 1}^r |D_i  \cap N_{S_n}(M) |.$$ 
If $m$ is odd, then 
$$|N_G(M^m)^g \cap \Pi_{0,2}| = \left( \frac{1}{2} \; |N_{S_n}(M)| \right)^{m-1} \cdot |C \cap N_{S_n}(M)|.$$
\end{lemma}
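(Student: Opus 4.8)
The plan is to compute the intersection $N_G(M^m)^g \cap \Pi_{0,r}$ by first reducing to the case $g=1$. Since $\Pi_{0,r}$ is closed under conjugation by Proposition \ref{classedeconj_i}, we have $|N_G(M^m)^g \cap \Pi_{0,r}| = |N_G(M^m) \cap \Pi_{0,r}^{g^{-1}}| = |N_G(M^m) \cap \Pi_{0,r}|$, so it suffices to count the elements of $\Pi_{0,r}$ lying in $H := N_G(M^m)$. Recall that $H$ supplements the socle, so $H \cap N = M^m$ (up to the identification fixed before the lemma) and $H = M^m \rtimes \langle v \rangle$ for a suitable complement, where $v$ projects onto a generator of $G/N \cong C_{2m}$; concretely one can take $v$ of the form $(b_1,\ldots,b_m)\gamma$ with each $b_i \in N_{S_n}(M)$. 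This is where Lemma \ref{ncontido} enters: it guarantees $N_{S_n}(M) \not\subseteq A_n$, so that such a $v$ with $b_i \in N_{S_n}(M)$ and an overall odd contribution exists, making $H$ genuinely a supplement rather than contained in $N$.

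Next I would describe $H \cap \gamma^r N$ explicitly. An element of $H$ lying in the coset $\gamma^r N$ has the form $(z_1,\ldots,z_m)\gamma^r$ with each $z_i \in N_{S_n}(M)$ (working out the normalizer conditions: conjugating $M^m$ by $(z_1,\ldots,z_m)\gamma^r$ permutes the factors cyclically by $r$ and conjugates the $i$-th copy of $M$ by $z_i$ or $z_i\tau$ appropriately, so each relevant element normalizes $M$, i.e.\ lies in $N_{S_n}(M)$, using that $M^\tau = M$ which holds since $\tau \in N_{S_n}(M)$ can be arranged, again via Lemma \ref{ncontido}). Then I match this against the defining condition of $\Pi_{0,r} = \bigcup_{\sigma \in \langle \nu\rangle} \Pi_{0,r,\sigma}$: the element $(z_1,\ldots,z_m)\gamma^r$ lies in $\Pi_{0,r,\sigma}$ iff $z_i z_{i+r}\cdots z_{i+m-r}\tau \in D_{\sigma(i)}$ for each $i=1,\ldots,r$. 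Since $\langle \nu\rangle$ has order $r$ and the sets $D_1,\ldots,D_r$ are distinct $S_n$-classes, the $r$ choices of $\sigma$ give disjoint conditions, contributing the overall factor of $r$.

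For the counting itself: fix $\sigma$, say $\sigma = \mathrm{id}$ after relabelling. For each residue $i \in \{1,\ldots,r\}$ we must count tuples $(z_i, z_{i+r}, \ldots, z_{i+m-r}) \in N_{S_n}(M)^{m/r}$ — wait, more carefully, each of the $r$ ``threads'' has length $m/r$, but the product condition $z_i z_{i+r}\cdots z_{i+m-r}\tau \in D_i$ with all $z_{i+jr}\in N_{S_n}(M)$ constrains the product. Since $N_{S_n}(M)$ is a group containing $\tau$, the product $z_i z_{i+r}\cdots z_{i+m-r}$ ranges over $N_{S_n}(M)$, and for the product to lie in $D_i \cap N_{S_n}(M)$ (after multiplying by $\tau^{-1}=\tau$, which stays in $N_{S_n}(M)$), the number of tuples with prescribed product is $|N_{S_n}(M)|^{m/r - 1}$ per valid product value; hence the count for thread $i$ is $|N_{S_n}(M)|^{m/r-1}\cdot|D_i \cap N_{S_n}(M)|$. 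Here I must be careful that the condition ``$(z_1,\ldots,z_m)\gamma^r \in H$'' is equivalent to ``$z_j \in N_{S_n}(M)$ for all $j$'' with no further global constraint once we know $H\cap N = M^m$; this follows because $H/(H\cap N)\cong G/N$ is cyclic of order $2m$ and the elements $(z_1,\ldots,z_m)\gamma^r$ with $z_j\in N_{S_n}(M)$ already form a subgroup of the right index, so they exhaust $H \cap \gamma^r N$ together with the $M^m$-translates. Multiplying over the $r$ threads and over the $r$ choices of $\sigma$, and rewriting $|N_{S_n}(M)|^{m/r-1}$ appropriately — noting $\prod_{i=1}^r |N_{S_n}(M)|^{m/r-1} = |N_{S_n}(M)|^{m-r}$ — and then accounting for the fact that $M$ has index $2$ in $N_{S_n}(M)$ only where the tuple entries are forced into $M$ versus $N_{S_n}(M)$... actually the factor $\tfrac12$ appears because exactly the ``free'' entries range over $N_{S_n}(M)$ while it is the product, not each entry, that is pinned down; re-examining, the cleanest bookkeeping is: the number of $(z_1,\dots,z_m)\in N_{S_n}(M)^m$ with the $r$ product conditions is $\prod_{i=1}^{r}\big(|N_{S_n}(M)|^{m/r-1}|D_i\cap N_{S_n}(M)|\big)$, but elements of $\Pi_{0,r}$ are cosets $(x_1,\dots,x_m)\gamma^r$ with $x_j\in A_n$, and an element $(z_1,\dots,z_m)\gamma^r$ of $H$ equals such a thing iff we can absorb the ``odd parts'' — this is exactly where the $\tfrac12$ per free coordinate comes in, giving $\big(\tfrac12|N_{S_n}(M)|\big)^{m-r}\prod_{i=1}^r|D_i\cap N_{S_n}(M)|$.

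For the odd-$m$, $r=2$ case the argument is parallel but uses the single class $C$ of cycle type $(p,n-p)$: an element of $H \cap \gamma^2 N$ is $(z_1,\ldots,z_m)\gamma^2$ with all $z_j \in N_{S_n}(M)$, and it lies in $\Pi_{0,2}$ iff $z_1 z_3\cdots z_m \tau \cdot z_2 z_4 \cdots z_{m-1}\tau \in C$; since $m$ is odd the two threads have lengths $(m+1)/2$ and $(m-1)/2$ but are coupled into a single product condition, so the count is $|N_{S_n}(M)|^{m-2}\cdot|C\cap N_{S_n}(M)|$ up to the $\tfrac12$'s, yielding $\big(\tfrac12|N_{S_n}(M)|\big)^{m-1}|C\cap N_{S_n}(M)|$ — note the exponent is $m-1$, not $m-2$, because coupling the two threads removes one degree of freedom, consistent with the single class $C$ replacing a product of two classes. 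The main obstacle I anticipate is the careful bookkeeping of which coordinates are ``free'' versus ``determined'' and correctly tracking the index-$2$ factors $[N_{S_n}(M):M]=2$: one must verify that $H \cap \gamma^r N = \{(z_1,\ldots,z_m)\gamma^r : z_j \in N_{S_n}(M)\}$ exactly (neither more nor less), which rests on knowing $|H| = 2m|M|^m$ and $H\cap N = M^m$ from the structure theory in \cite{Spagnoli} and \cite{LPS}, together with Lemma \ref{ncontido} to see that $N_{S_n}(M)$ genuinely surjects onto $S_n/A_n$ so the coset representatives can be chosen with entries in $N_{S_n}(M)$; getting the exponents of $\tfrac12$ right then follows by a dimension count comparing $|H\cap \gamma^r N|$ computed two ways.
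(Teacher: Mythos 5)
Your proposal follows essentially the same route as the paper: reduce to $g=1$ by conjugation-invariance of $\Pi_{0,r}$, read off the normalizer conditions coordinate by coordinate from $(M^m)^{(x_1,\ldots,x_m)\gamma^r}=M^m$, then count $m-r$ free coordinates (each contributing $\frac{1}{2}|N_{S_n}(M)|$) and one pinned coordinate per thread (contributing $|D_{\sigma(i)}\cap N_{S_n}(M)|$ via the translate identity $D_{\sigma(i)}z^{-1}\cap N_{S_n}(M)=(D_{\sigma(i)}\cap N_{S_n}(M))z^{-1}$), with the factor $r$ coming from the disjoint union over $\sigma\in\langle\nu\rangle$; the odd-$m$ case is handled identically with the single class $C$. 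The final tallies all agree with the paper.

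One step is justified incorrectly, though the count survives: you assert that $\tau\in N_{S_n}(M)$ ``can be arranged via Lemma \ref{ncontido}'' in order to rewrite the condition on the last $r$ coordinates as $z_j\in N_{S_n}(M)$. Lemma \ref{ncontido} only says $N_{S_n}(M)A_n=S_n$, i.e.\ that $N_{S_n}(M)$ contains \emph{some} odd permutation; it does not put the specific transposition $\tau$ inside $N_{S_n}(M)$, and for $M$ primitive the normalizer need not contain any transposition at all. The correct condition on those coordinates is $x_j\tau\in N_{S_n}(M)\cap(S_n\setminus A_n)$, a set which has cardinality $\frac{1}{2}|N_{S_n}(M)|$ precisely because of Lemma \ref{ncontido}, which is how the paper argues; the product $x_ix_{i+r}\cdots x_{i+m-r}\tau$ still lands in $N_{S_n}(M)$ because its last two factors combine to an element of $N_{S_n}(M)$. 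So your set description of $H\cap\gamma^rN$ is off, but every cardinality you extract from it is right; I would just replace the appeal to $M^\tau=M$ by the equal-halves observation. The auxiliary structural claims (that $H$ splits as $M^m\rtimes\langle v\rangle$) are unnecessary for the count and can be dropped.
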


\begin{proof}
Since $\Pi_{0,r}$ is closed under conjugation, the size of $N_G(M^m)^g \cap \Pi_{0,r}$ equals the size of $N_G(M^m) \cap \Pi_{0,r}$, therefore we may assume that $g=1$. Assume first that either $m$ is even or $r \neq 2$. We will compute $|N_G(M^m) \cap \Pi_{0,r,\sigma}|$ for each $\sigma \in \langle \nu \rangle$ and sum all the contributions. Fix $\sigma \in \langle \nu \rangle$. Let $(x_1, \ldots , x_m) \gamma^r \in N_G(M^m) \cap \Pi_{0,r,\sigma}$, then $M^m$ equals
\begin{align*}
    (M \times \ldots \times M)^{(x_1, \ldots , x_m) \gamma^r} = M^{x_{m-r+1}\tau} \times \ldots \times M^{x_m \tau} \times M^{x_1} \times \ldots \times M^{x_{m-r}}  .
\end{align*}
So $x_{m-r+1} \tau, \ldots , x_m \tau \in N_{S_n}(M) \cap (S_n - A_n)$ and $x_1, \ldots , x_{m-r} \in N_{S_n}(M) \cap A_n$. Since $N_{S_n}(M)$ is not contained in $A_n$, the sets $N_{S_n}(M) \cap A_n$ and $N_{S_n}(M) \cap (S_n-A_n)$ have the same cardinality.  Since $(x_1, \ldots , x_m) \gamma^r \in \Pi_{0,r}$, the $x_i$'s must also satisfy the equations of the definition of $\Pi_{0, r , \sigma}$. So for each equation $$ x_i x_{i+r} \ldots x_{i+m-r} \tau \in D_{\sigma(i)} ,$$ where $i = 1, \ldots , r$, we can freely choose the elements $x_{i+r}, \ldots , x_{m-r+i}$, with $| N_{S_n}(M) \cap A_n | = \frac{1}{2} |N_{S_n}(M)|$ choices for each, and only the elements $x_i$, $i=1,\ldots,r$, need to be chosen in order to satisfy the equation defining $\Pi_{0, r , \sigma}$, which is $x_iz_i \in D_{\sigma(i)}$, where $z_i = x_{i+r} \ldots x_{i+m-r} \tau \in N_{S_n}(M)$. Since
$$D_{\sigma(i)}z_i^{-1} \cap N_{S_n}(M) = (D_{\sigma(i)} \cap N_{s_n}(M))z_i^{-1},$$
there are $|D_{\sigma(i)} \cap N_{S_n}(M)|$ choices for each $x_i$, $i=1,\ldots,r$, and the result follows.

Assume now that $m$ is odd. Let $(x_1, \ldots , x_m) \gamma^2 \in N_G(M^m) \cap \Pi_{0,2}$. Then $M^m$ equals
\begin{align*}
 (M \times \ldots \times M)^{(x_1, \ldots , x_m) \gamma^2} 
 = M^{x_{m-1}\tau} \times M^{x_m \tau} \times M^{x_1} \times \ldots \times M^{x_{m-2}}.
\end{align*}
So $x_{m-1} \tau, x_m \tau \in N_{S_n}(M)\cap(S_n - A_n)$ and $x_1, \ldots , x_{m-2} \in N_{S_n}(M) \cap A_n$. Since $N_{S_n}(M)$ is not contained in $A_n$, we have $\frac{1}{2} |N_{S_n}(M)|$ choices for each of $x_{m-1}$ and $x_m$.  Now we can choose $x_2, \ldots , x_{m-2}$ freely in $N_{S_n}(M) \cap A_n$ and we need to choose $x_1$ in order to satisfy the equation that defines $\Pi_{0,2}$, which is $x_1 t \in C$, where $t = x_3 x_5 \ldots x_m \tau x_2 x_4 \ldots x_{m-1} \tau \in N_{S_n}(M)$. We can choose $x_1$ freely in $Ct^{-1} \cap N_{S_n}(M) = (C \cap N_{S_n}(M))t^{-1}$, so we have $|C \cap N_{S_n}(M)|$ choices for $x_1$. The result follows.
\end{proof}

\begin{cor} \label{pi_0r}
Assume that either $m$ is even or $r \neq 2$. Then $N_G(M^m) \cap \Pi_{0,r} = \varnothing$ if and only if $N_{S_n}(M) \cap D_i = \varnothing$ for at least one $i \in \{1,\ldots,r\}$. Moreover, if $m$ is odd and $r = 2$, then $N_G(M^m) \cap \Pi_{0,2} = \varnothing$ if and only if $N_{S_n}(M) \cap C = \varnothing$.
\end{cor}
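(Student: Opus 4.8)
The plan is to deduce the statement directly from the size formulas established in Lemma \ref{pi_0}, together with the fact that $N_{S_n}(M)$ is not contained in $A_n$ (Lemma \ref{ncontido}). First I would remove the superscript $g$: since $\Pi_{0,r}$ and $\Pi_{0,2}$ are closed under conjugation by Proposition \ref{classedeconj_i}, the cardinality $|N_G(M^m)^g \cap \Pi_{0,r}|$ is independent of $g$, so $N_G(M^m)^g \cap \Pi_{0,r} = \varnothing$ if and only if $N_G(M^m) \cap \Pi_{0,r} = \varnothing$. As a finite set is empty precisely when its cardinality is $0$, it suffices to decide when the relevant formula from Lemma \ref{pi_0} evaluates to zero.

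Consider first the case where $m$ is even or $r \neq 2$. Lemma \ref{pi_0} gives
$$|N_G(M^m) \cap \Pi_{0,r}| = r \cdot \left( \frac{1}{2}\, |N_{S_n}(M)| \right)^{m-r} \cdot \prod_{i=1}^r |D_i \cap N_{S_n}(M)|,$$
a product of non-negative integers, hence zero if and only if one of its factors is zero. The factor $r$ is a positive integer. For the factor $\left( \frac{1}{2}\, |N_{S_n}(M)| \right)^{m-r}$, note that $r$ divides $m$ in this case — $r$ is a prime divisor of $2m$, and either $m$ is even, or $r$ is odd in which case $r \mid 2m$ forces $r \mid m$ — so the exponent $m-r$ is $\geq 0$; moreover, by Lemma \ref{ncontido}, $N_{S_n}(M)$ contains an odd permutation, so its subgroup $N_{S_n}(M) \cap A_n$ has index $2$, whence $|N_{S_n}(M)|$ is even and $\frac{1}{2}|N_{S_n}(M)| = |N_{S_n}(M) \cap A_n| \geq 1$. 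Therefore $\left( \frac{1}{2}\, |N_{S_n}(M)| \right)^{m-r} \geq 1 > 0$, and the whole expression vanishes exactly when $|D_i \cap N_{S_n}(M)| = 0$ for some $i \in \{1,\ldots,r\}$, i.e.\ when $D_i \cap N_{S_n}(M) = \varnothing$ for at least one such $i$. This is the first assertion.

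The case $m$ odd, $r = 2$ is entirely analogous. Here Lemma \ref{pi_0} gives
$$|N_G(M^m) \cap \Pi_{0,2}| = \left( \frac{1}{2}\, |N_{S_n}(M)| \right)^{m-1} \cdot |C \cap N_{S_n}(M)|,$$
and since $m$ is odd and $m \geq 2$ we have $m - 1 \geq 0$, while $\frac{1}{2}|N_{S_n}(M)| \geq 1$ as above; hence the left-hand side is zero if and only if $|C \cap N_{S_n}(M)| = 0$, that is, $C \cap N_{S_n}(M) = \varnothing$. I do not expect any genuine obstacle here — the statement is a direct corollary of Lemma \ref{pi_0} — the only point requiring a moment's care is verifying that the "auxiliary" factors $r$ and $\left( \frac{1}{2}\, |N_{S_n}(M)| \right)^{m-r}$ are strictly positive, which is exactly where Lemma \ref{ncontido} and the divisibility bookkeeping $r \mid m$ are used.
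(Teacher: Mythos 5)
Your argument is correct and is exactly the intended one: the paper states this as an immediate corollary of Lemma \ref{pi_0} (with no written proof), the point being that the cardinality formula vanishes precisely when one of the intersection factors $|D_i \cap N_{S_n}(M)|$ (respectively $|C \cap N_{S_n}(M)|$) is zero, since $r>0$ and $\frac{1}{2}|N_{S_n}(M)|=|N_{S_n}(M)\cap A_n|\geqslant 1$ by Lemma \ref{ncontido}. Your extra care with the exponent $m-r\geqslant 0$ (via $r\mid m$ in the relevant case) is a correct and worthwhile detail that the paper leaves implicit.
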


\begin{lemma} \label{pi_i}
If $i \in I$, then $|N_G(M^m) \cap \Pi_i| = \left( \frac{1}{2} \; |N_{S_n}(M)| \right)^{m-1} \cdot |B_i \cap N_{S_n}(M)|$.
\end{lemma}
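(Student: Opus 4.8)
The plan is to mimic the counting argument of Lemma \ref{pi_0}, but now for the single translate $\gamma$ rather than $\gamma^r$. Since $\Pi_i$ is closed under conjugation by Proposition \ref{classedeconj_i}, the cardinality $|N_G(M^m)^g \cap \Pi_i|$ does not depend on $g$, so I may take $g=1$ and compute $|N_G(M^m) \cap \Pi_i|$ directly. Let $(x_1,\ldots,x_m)\gamma$ be an arbitrary element of $N_G(M^m)$. The condition that this element normalizes $M^m$ means, by the same computation as in Lemma \ref{pi_0} (with $r=1$), that
$$(M \times \cdots \times M)^{(x_1,\ldots,x_m)\gamma} = M^{x_m\tau} \times M^{x_1} \times \cdots \times M^{x_{m-1}} = M^m,$$
which forces $x_m\tau \in N_{S_n}(M)$ and $x_1,\ldots,x_{m-1} \in N_{S_n}(M)$; since the $x_j$ lie in $A_n$, this says $x_1,\ldots,x_{m-1} \in N_{S_n}(M)\cap A_n$ and $x_m \in (N_{S_n}(M)\cap(S_n-A_n))\tau$.

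Next I impose the membership condition defining $\Pi_i$, namely $x_1 x_2 \cdots x_m \tau \in B_i$. By Lemma \ref{ncontido}, $N_{S_n}(M)$ is not contained in $A_n$, so $|N_{S_n}(M)\cap A_n| = \tfrac12 |N_{S_n}(M)|$ and the coset $(N_{S_n}(M)\cap(S_n-A_n))$ has the same size. The idea is to choose $x_1,\ldots,x_{m-1}$ freely in $N_{S_n}(M)\cap A_n$, giving $\left(\tfrac12 |N_{S_n}(M)|\right)^{m-1}$ choices, and then solve for $x_m$. Writing $w := x_1 \cdots x_{m-1} \in N_{S_n}(M)$, the constraint becomes $w x_m \tau \in B_i$, i.e. $x_m\tau \in w^{-1}B_i$. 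On the other hand the normalizer constraint says $x_m \tau \in N_{S_n}(M)\cap(S_n-A_n)$, but actually since $B_i \subseteq S_n - A_n$ (indeed $B_i \cap A_n = \varnothing$, as noted after the definitions) and $w \in A_n$ would force the wrong parity — here I should be careful: $w \in N_{S_n}(M)$ need not be in $A_n$. Let me instead just count directly: the number of $x_m \in A_n$ with $x_m\tau \in N_{S_n}(M)$ and $wx_m\tau \in B_i$ equals $|\{y \in N_{S_n}(M)\cap(S_n-A_n) : wy \in B_i\}| = |w^{-1}B_i \cap N_{S_n}(M)\cap(S_n-A_n)|$. Since $B_i$ is a conjugacy class of $S_n$ contained in $S_n-A_n$, and $N_{S_n}(M)$ is a subgroup, I use that $w^{-1}B_i \cap N_{S_n}(M) = w^{-1}(B_i \cap w N_{S_n}(M)) = w^{-1}(B_i \cap N_{S_n}(M))$ when $w \in N_{S_n}(M)$, which it is; and every element of $B_i$ is odd so $B_i \cap N_{S_n}(M) \subseteq S_n - A_n$ automatically. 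Hence there are exactly $|B_i \cap N_{S_n}(M)|$ valid choices for $x_m$, independently of the chosen $w$.

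Multiplying the $\left(\tfrac12|N_{S_n}(M)|\right)^{m-1}$ free choices for $x_1,\ldots,x_{m-1}$ by the $|B_i \cap N_{S_n}(M)|$ choices for $x_m$ yields the claimed formula. The one subtle point I should verify carefully is that the map sending $(x_1,\ldots,x_m)$ to the corresponding element of $N_G(M^m)\cap\Pi_i$ is a bijection onto — i.e. that every element of $N_G(M^m)\cap\Pi_i$ really does have last component of the form $x_m$ as above with $x_m\tau \in N_{S_n}(M)$, which is exactly the normalizer condition already extracted, and that distinct tuples give distinct elements, which is clear since the tuple is recovered from the element. This is the only place any care is needed; the rest is the same bookkeeping as in Lemma \ref{pi_0}, and I expect no real obstacle.
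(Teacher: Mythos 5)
Your proof is correct and follows essentially the same counting argument as the paper, the only cosmetic difference being that you solve for $x_m$ after fixing $x_1,\ldots,x_{m-1}$ while the paper solves for $x_1$ after fixing $x_2,\ldots,x_m$. The one point where you hesitate — whether $w=x_1\cdots x_{m-1}$ lies in $A_n$ — is in fact automatic (each $x_j\in A_n$), which is what guarantees $w^{-1}(B_i\cap N_{S_n}(M))\subseteq S_n-A_n$ and completes your count.
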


\begin{proof}
Let $(x_1, \ldots , x_m ) \gamma \in N_G(M^m) \cap \Pi_i$, then $M^m$ equals
$$(M^m)^{(x_1, \ldots , x_m)\gamma} = (M^{x_1} \times \ldots \times M^{x_m})^{\gamma} = M^{x_m \tau} \times M^{x_1} \times \ldots \times M^{x_{m-1}} .$$
So $x_m \tau \in N_{S_n}(M)\cap(S_n - A_n)$ and $x_1, \ldots , x_{m-1} \in N_{S_n}(M) \cap A_n$. Since $N_{S_n}(M)$ is not contained in $A_n$, the number of choices for $x_m$ is $\frac{1}{2} |N_{S_n}(M)|$. Now we can choose $x_2, \ldots , x_{m-1}$ freely in $N_{S_n}(M) \cap A_n$ and we need to choose $x_1$ in order to satisfy the equation that defines $\Pi_i$, which is $x_1 t \in B_i$, where $t = x_2 \ldots x_m \tau \in N_{S_n}(M)$. In other words, we can choose $x_1$ freely in $B_i t^{-1} \cap N_{S_n}(M) = (B_i \cap N_{S_n}(M))t^{-1}$ so the number of choices for $x_1$ is $|B_i \cap N_{S_n}(M)|$. The result follows.
\end{proof}

\begin{cor} 
If $i \in I$, then $N_G(M^m) \cap \Pi_i = \varnothing$ if and only if $B_i \cap N_{S_n}(M) = \varnothing$.
\end{cor}

The following proposition implies that condition (2) of Lemma \ref{EricLemma} holds.

\begin{prop} \label{unico}
Let $\pi \in \Pi$, then there is a unique $L \in \mathcal{C}$ such that $\pi \in L$.
\end{prop}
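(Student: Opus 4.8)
The plan is to split the argument according to the type of element $\pi \in \Pi$, matching the way $\Pi$ was defined. Recall $\Pi = \bigcup_{i \in I} \Pi_i \cup \bigcup_{r \mid 2m} \Pi_{0,r}$, and the pieces $\Pi_i$ (for $i \in I$) and $\Pi_{0,r}$ (for $r$ a prime divisor of $2m$) are pairwise disjoint, because the corresponding conjugacy classes $B_i$ of $S_n$, the classes $D_i$, and the class $C$ are pairwise distinct --- here one uses the numerical hypotheses $n \geqslant 30$, $6 \mid n$ and $n/3 < p < 2n/3$ to check that the cycle types really are all different, and that none of them meets $A_n$ (for the $B_i$) or lies in the wrong coset (for $D_1$, $D_i$, $C$). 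So it suffices to prove: (a) if $\pi \in \Pi_i$ with $i \in I$, then exactly one $L \in \mathcal{C}$ contains $\pi$, and that $L$ lies in $\mathcal{M}_i$; (b) if $\pi \in \Pi_{0,r}$, then the only $L \in \mathcal{C}$ containing $\pi$ is $M_{0,r}$.

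For case (b), write $\pi = (x_1,\ldots,x_m)\gamma^k$ with the order of $\gamma^k$ divisible by $r$; in fact by construction $\pi \in M_{0,r} = A_n^m \rtimes \langle \gamma^r \rangle$. I must rule out all the other members of $\mathcal{C}$. The subgroups $M_{0,r'}$ for $r' \neq r$ a prime divisor of $2m$ do not contain $\pi$: the image of $\pi$ in $G/N \cong C_{2m}$ has order divisible by $r$, hence is not in the unique index-$r'$ subgroup. A subgroup $L \in \mathcal{M}_i$ (for $i \in I$) has the form $N_G(M^{a_1} \times \cdots \times M^{a_m})$ with $M \in \mathcal{E}_i$ and $N_{S_n}(M) \cap B_i \neq \varnothing$; by Corollary~\ref{pi_0r} (applied after conjugating so that the factor is $M^m$) such an $L$ meets $\Pi_{0,r}$ only if $N_{S_n}(M)$ meets every $D_i$ (resp.\ meets $C$ when $m$ is odd, $r=2$). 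But the argument of \cite[Lemma 5.2]{Eric} / the disjointness observations show that a group in $\mathcal{F}$ containing an $(n-2)$-cycle and an $n$-cycle, or a $(p,n-p)$-element, together with an element of $B_i$, cannot exist once $n \geqslant 30$ is divisible by $6$; a maximal intransitive $M \in \mathcal{E}_i$ with orbit sizes $i, n-i$ (with $i < n/3$) contains no $n$-cycle at all, and similarly a maximal imprimitive one with two blocks contains no $(n-2)$-cycle nor an element with a prime cycle of length $p$ in the stated range --- here I will cite \cite[Lemma 5.2]{Eric} directly for the relevant non-containments. Hence no $L \in \mathcal{M}_i$ contains $\pi$, and $M_{0,r}$ is the unique member of $\mathcal{C}$ through $\pi$.

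For case (a), let $\pi \in \Pi_i$, so $\pi = (x_1,\ldots,x_m)\gamma$ with $x_1 \cdots x_m \tau \in B_i$. First, $\pi$ lies in no $M_{0,r}$, since the image of $\pi$ in $G/N$ generates (its projection being a generator of the cyclic group of order $2m$, as $\tau\delta$ has order $2m$ and is coprime to... well, $\gamma$ itself generates $G/N$), so $\pi$ is outside every proper subgroup containing $N$. Next, $\pi$ lies in no $L \in \mathcal{M}_j$ with $j \in I$, $j \neq i$: such $L$, after an $N$-conjugation bringing $L \cap N$ to $M^m$ with $M \in \mathcal{E}_j$, satisfies, by Lemma~\ref{pi_i}, $|L \cap \Pi_i| = (\tfrac12|N_{S_n}(M)|)^{m-1} \cdot |B_i \cap N_{S_n}(M)|$, and $B_i \cap N_{S_n}(M) = \varnothing$ because, by \cite[Lemma 5.2]{Eric}, the only subgroups in $\mathcal{F}$ meeting $B_i$ are those in $\mathcal{F}_i$, whereas $N_{S_n}(M) \in \mathcal{F}_j \neq \mathcal{F}_i$. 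It remains to handle $L \in \mathcal{M}_i$. If $L = N_G(M^{a_1} \times \cdots \times M^{a_m})$ contains $\pi$, conjugating by $(a_1^{-1},\ldots)$-type tuples and tracking the cyclic shift one finds, exactly as in the proof that $\mathcal{C}$ is a covering, that $L \cap N$ must equal $M \times M^{x_1} \times M^{x_1 x_2} \times \cdots \times M^{x_1 \cdots x_{m-1}}$ with $x_1 \cdots x_m \tau \in N_{S_n}(M)$; so $M$ is determined up to the freedom $N_{S_n}(M)$ has in the decomposition $x_1 \cdots x_m \tau$, and since each member of $\mathcal{E}_i$ lies in a \emph{unique} member of $\mathcal{F}_i$ (as $\mathcal{E}_i$, $\mathcal{F}_i$ are conjugacy classes with $N_{S_n}(M)$ the normalizer of $M$, and distinct $M$'s in a given orbit have distinct normalizers for these intransitive/imprimitive types --- a standard fact I will quote), $L$ is uniquely determined.

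\textbf{Main obstacle.} The delicate point --- and the step I expect to take the most care --- is the uniqueness within $\mathcal{M}_i$: one must show that the data $(M, a_1,\ldots,a_m)$ producing an $L \ni \pi$ is rigid, i.e.\ that $L \cap N$ is forced to be $M \times M^{x_1} \times \cdots \times M^{x_1\cdots x_{m-1}}$ and that this $m$-tuple of subgroups of $A_n$ determines $L$ as a subgroup of $G$. This rests on two sub-facts: that an element of $B_i$ normalizes exactly one member of $\mathcal{E}_i$ (equivalently, lies in exactly one member of $\mathcal{F}_i$), which is the content of the uniqueness half of \cite[Lemma 5.2]{Eric} for these very explicit cycle types in $A_n$, $n \geqslant 30$, $6 \mid n$; and that $N_G(K_1 \times \cdots \times K_m) = N_G(K_1' \times \cdots \times K_m')$ with all $K_j, K_j'$ conjugate to $M \in \mathcal{E}_i$ forces $K_j = K_j'$ for all $j$, which follows because $K_1 \times \cdots \times K_m$ is recoverable from its normalizer as the product of the socles-of-orbit-stabilizers of the $m$ factors. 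I will also need to double-check the innocuous-looking claim that no element of $\Pi$ lies in a socle-containing subgroup, which for $\Pi_i$ uses that $\gamma$ generates $G/N \cong C_{2m}$, and for $\Pi_{0,r}$ uses the coprimality of the relevant prime powers.
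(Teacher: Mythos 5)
Your proposal is correct and follows essentially the same route as the paper: the same case split between $\Pi_{0,r}$ (where Corollary \ref{pi_0r} plus the cycle-type non-containments force $M_{0,r}$ to be the only candidate) and $\Pi_i$ (where the uniqueness of the member of $\mathcal{F}$ containing $x_1\cdots x_m\tau$, combined with the cyclic conjugation relations $a_jx_ja_{j+1}^{-1}\in N_{S_n}(M)$, pins down the defining product $M\times M^{x_1}\times\cdots\times M^{x_1\cdots x_{m-1}}$). The "main obstacle" you isolate is exactly the step the paper carries out explicitly, and your remaining imprecisions (e.g.\ the phrasing about orders in $G/N$, the intransitive-versus-$C$ check) are routine and at the same level of detail the published proof itself leaves implicit.
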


\begin{proof}
If $\pi \in \Pi_{0,r}$ for some prime $r$ that divides $2m$, then $M_{0,r}$ is the only subgroup in $\mathcal{M}$ that contains $\pi$. This follows from Corollary \ref{pi_0r} and the fact that no subgroup in $\mathcal{F}$ has non-empty intersection with all sets $D_1,\ldots,D_r$, because $n$-cycles do not belong to intransitive subgroups and $(n-2)$-cycles do not stabilize partitions with $2$ blocks.

Now suppose that $\pi \in \Pi_i$ for some $i \in I$. Then $\pi = (x_1, \ldots , x_m)\gamma$ with $x_1 \ldots x_m \tau \in B_i$, in particular $\pi \not \in M_{0,r}$ for every prime $r$ that divides $2m$. There is a unique $H \in \mathcal{F}$ such that $x_1 \cdots x_m \tau \in H$ and $H = N_{S_n}(M)$ where $M=H \cap A_n \in \mathcal{E}$. Suppose that $\pi = (x_1, \ldots , x_m)\gamma$ belongs to $N_G(M^{a_1} \times \ldots \times M^{a_m})$. Then $M^{a_1} \times \ldots \times M^{a_m}$ equals
\begin{align*}
    (M^{a_1} \times \ldots \times M^{a_m})^{(x_1, \ldots , x_m)\gamma} = M^{a_m x_m \tau} \times M^{a_1 x_1} \times \ldots \times M^{a_{m-1} x_{m-1}} .
\end{align*}
So, for $i$ with $1 \leqslant i \leqslant m-1$, $a_i x_i a_{i+1}^{-1} \in H$ and $a_m x_m \tau a_1^{-1} \in H$.
Multiplying all these elements starting from the $i$-th one, we have
$$a_i x_i x_{i+1} \ldots x_m \tau x_1 x_2 \ldots x_{i-1} a_i^{-1} \in H,$$
which can be written as
$$(x_1 \ldots x_m \tau)^{x_1 \ldots x_{i-1}} = x_i x_{i+1} \ldots x_m \tau x_1 x_2 \ldots x_{i-1} \in H^{a_i}.$$
In particular $x_1 \ldots x_m \tau \in H^{a_1}$. Since $\mathcal{F}$ is closed under conjugation, the uniqueness of $H$ implies that $H^{a_1} = H$, therefore $a_1 \in N_{S_n}(H) = H$, being $H$ a maximal subgroup of $S_n$. Now we can rewrite the above equations as
$$a_i \in H x_1 x_2 \ldots x_{i-1}, \hspace{.5 cm} \forall i = 2, \ldots, m. $$
It follows that $M^{a_1}=M$ and $M^{a_i}=M^{x_1 \ldots x_{i-1}}$ for all $i=2,\ldots,m$, hence the only subgroup in $\mathcal{C}$ that contains $\pi$ is
$$N_G(M \times M^{x_1} \times M^{x_1x_2} \times \ldots \times M^{x_1 \ldots x_{m-1}}).$$
This concludes the proof.
\end{proof}

In order to conclude the proof of Theorem \ref{main_1}, we are left to show that condition (3) of Lemma \ref{EricLemma} holds, in other words that $d(H) < 1$ for every maximal subgroup $H$ of $G$ not in $\mathcal{C}$.

\begin{lemma} \label{pi_2}
For $n \geqslant 30$, the value of $|\Pi_{0,2}|$ is smallest when $m$ is even. Moreover, if $g \in G$, then $|N_G(M^m)^g \cap \Pi_{0,2} |/|\Pi_{0,2}| \leqslant 2n(n-2)/|S_n:N_{S_n}(M)|^m$.
\end{lemma}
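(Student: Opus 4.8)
The plan is to reduce both statements to elementary estimates on the class sizes $|D_i|$, $|C|$ and on $|B_i \cap N_{S_n}(M)|$-type quantities, using the exact formulas from Lemma \ref{pi_0} and the explicit cardinalities of $\Pi_{0,2}$ recorded before Proposition \ref{classedeconj_i}.

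First I would handle the claim that $|\Pi_{0,2}|$ is smallest when $m$ is even. When $m$ is even we have $|\Pi_{0,2}| = 2 \cdot |A_n|^{m-2} \cdot |D_1| \cdot |D_2|$, where $D_1$ is the class of $(n-2)$-cycles (of size $|S_n|/(2(n-2)) = (n-1)!/(2(n-2))$, counting that each $(n-2)$-cycle fixes a $2$-subset) and $D_2$ is the class of $n$-cycles (of size $(n-1)!$). When $m$ is odd we have $|\Pi_{0,2}| = |A_n|^{m-1} \cdot |C|$, where $C$ is the class of cycle type $(p, n-p)$ with $n/3 < p < 2n/3$, of size $|S_n|/(p(n-p))$ if $p \neq n-p$ (which holds since $p$ is an odd prime in that range). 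So the comparison is between $2 |D_1| |D_2| / |A_n|$ and $|C|$, i.e. between $4 |D_1| |D_2| / |S_n|$ and $|C|$. Plugging in the sizes, $4|D_1||D_2|/|S_n| = 4 \cdot \frac{(n-1)!}{2(n-2)} \cdot (n-1)! / n! = \frac{2(n-1)!}{(n-2)}\cdot\frac{(n-1)!}{n!} = \frac{2(n-1)\cdot (n-1)!}{n(n-2)}$, while $|C| = n!/(p(n-p)) \leqslant n!/((n/3)(n/3)) = 9\,n!/n^2 = 9(n-1)!/n$. A cleaner route: compare $|\Pi_{0,2}|$ in the two cases directly via $|\Pi_{0,2}^{\mathrm{even}}|/|A_n|^{m-2}$ versus $|\Pi_{0,2}^{\mathrm{odd}}|/|A_n|^{m-2} = |A_n|\cdot|C|$, reducing everything to showing $2|D_1||D_2| \leqslant |A_n|\cdot|C|$, i.e. $4|D_1||D_2| \leqslant |S_n|\cdot|C|$, which for $n \geqslant 30$ becomes a concrete inequality between rational functions of $n$ and $p$ bounded using $n/3 < p < 2n/3$; this is a routine single-variable estimate and I would state it as such, checking the worst case $p$ near $n/3$ or $n/2$.

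Next, for the ratio bound, I would invoke Lemma \ref{pi_0}: if $m$ is odd, $|N_G(M^m)^g \cap \Pi_{0,2}| = (\tfrac12|N_{S_n}(M)|)^{m-1}\cdot|C \cap N_{S_n}(M)|$, and dividing by $|\Pi_{0,2}| = |A_n|^{m-1}|C|$ gives $|C\cap N_{S_n}(M)|/(|C|\cdot|S_n:N_{S_n}(M)|^{m-1})$, using $\tfrac12|N_{S_n}(M)| / |A_n| = |N_{S_n}(M)|/|S_n| = 1/|S_n:N_{S_n}(M)|$. Since $|C\cap N_{S_n}(M)| \leqslant |C|$ trivially, this is at most $1/|S_n:N_{S_n}(M)|^{m-1} \leqslant |S_n:N_{S_n}(M)|/|S_n:N_{S_n}(M)|^m \leqslant 2n(n-2)/|S_n:N_{S_n}(M)|^m$, where at the last step I would need $|S_n:N_{S_n}(M)| \leqslant 2n(n-2)$ — but this is false for most $M$, so instead I should use the stronger fact that $C\cap N_{S_n}(M) = \varnothing$ unless $N_{S_n}(M)$ is transitive of a very restricted kind, and in fact I expect the bound is really needed only to feed into the $d(H)<1$ computation where $|S_n:N_{S_n}(M)|^m$ is large. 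The cleaner argument: if $m$ is even, the same computation with Lemma \ref{pi_0} gives $|N_G(M^m)^g\cap\Pi_{0,2}|/|\Pi_{0,2}| = |D_1\cap N_{S_n}(M)|\cdot|D_2\cap N_{S_n}(M)|/(|D_1||D_2|\cdot|S_n:N_{S_n}(M)|^{m-2})$; since $(n-2)$-cycles and $n$-cycles are huge classes, $|D_i\cap N_{S_n}(M)|/|D_i|$ is small, but to get the clean factor $2n(n-2)$ I would use $|D_i\cap N_{S_n}(M)| \leqslant |N_{S_n}(M)|$ and the sizes $|D_1| = (n-1)!/(2(n-2))$, $|D_2| = (n-1)!$, giving the product bound $\leqslant |N_{S_n}(M)|^2 \cdot 2(n-2)/(n-1)!^2 \cdot |S_n:N_{S_n}(M)|^{2-m}$, and then $|N_{S_n}(M)|^2/|S_n|^2 = |S_n:N_{S_n}(M)|^{-2}$ converts this to $2(n-2)\cdot |S_n|^2/(n-1)!^2 \cdot |S_n:N_{S_n}(M)|^{-m} = 2(n-2)n^2\cdot|S_n:N_{S_n}(M)|^{-m} $, which is $\leqslant 2n(n-2)\cdot|S_n:N_{S_n}(M)|^{-m}$ after noting $n \leqslant n$... this overshoots by a factor $n$, so I would instead track that one of $D_1, D_2$ forces $N_{S_n}(M)$ transitive (an $n$-cycle lies in no intransitive group), tightening $|D_2\cap N_{S_n}(M)| \leqslant |N_{S_n}(M)|/$ (something), or simply absorb the discrepancy by using $|D_1\cap N_{S_n}(M)| \leqslant \varphi(n-2)|N_{S_n}(M)|/(n-2)$-style bounds.

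The main obstacle will be getting the constant exactly $2n(n-2)$ rather than some larger polynomial: the crude bound $|D_i \cap N_{S_n}(M)| \leqslant |N_{S_n}(M)|$ loses too much, so I expect to need the finer observation that an element of $D_1\cup D_2$ in $N_{S_n}(M)$ is an $(n-2)$- or $n$-cycle, whose centralizer in $S_n$ has order $2(n-2)$ or $n$ respectively; hence the number of such elements in $N_{S_n}(M)$ is at most $|N_{S_n}(M)|/|C_{S_n}(\text{elt})| \cdot (\text{number of } N_{S_n}(M)\text{-classes})$, and more usefully $|D_i \cap N_{S_n}(M)| = |D_i| \cdot (\text{probability a uniform element of } D_i \text{ normalizes } M)$ — but the clean way is: the number of $n$-cycles in any subgroup $K \leqslant S_n$ is at most $|K|/n \cdot (\text{number of } K\text{-conjugacy classes of } n\text{-cycles})$; since all we need is an \emph{upper} bound, I would use $|D_1 \cap N_{S_n}(M)| \cdot |D_2 \cap N_{S_n}(M)| \leqslant |N_{S_n}(M)|^2/(2(n-2)\cdot n) \cdot (\#\text{classes})^2$ and observe the number of relevant classes is bounded (at most $\varphi(n-2)$ and $\varphi(n)$ respectively, but crucially at most $|N_{S_n}(M):M|\cdot(\text{stuff})$), and then the ratio becomes $\leqslant |S_n:N_{S_n}(M)|^{2-m}\cdot|N_{S_n}(M)|^2 \cdot 2(n-2)/((n-1)!)^2 \cdot \frac{1}{2n(n-2)}\cdot(\ldots)$, which after the substitution $|N_{S_n}(M)|/|S_n| = |S_n:N_{S_n}(M)|^{-1}$ telescopes to $\leqslant (\ldots)/|S_n:N_{S_n}(M)|^m$ with the right constant. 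I would present this step carefully, splitting into the $m$ even and $m$ odd cases, and in the odd case bounding $|C\cap N_{S_n}(M)| \leqslant |N_{S_n}(M)|/(p(n-p))\cdot\varphi$-type factors and using $p(n-p) > (n/3)(2n/3) = 2n^2/9$ together with $|\Pi_{0,2}| \geqslant |A_n|^{m-1}\cdot n!/(2n^2/9)$ to land on the stated bound, absorbing the small discrepancy into the $2n(n-2) > 9/2 \cdot (n-1)$-type slack available for $n \geqslant 30$.
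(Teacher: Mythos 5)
There is a genuine gap, and it traces back to a single arithmetic slip that then derails the whole second half of your argument. You take $|D_1|=(n-1)!/(2(n-2))$ for the class of $(n-2)$-cycles; the correct value is $|D_1|=n!/(2(n-2))$, since the centralizer of an $(n-2)$-cycle in $S_n$ has order $2(n-2)$ (your own first expression $|S_n|/(2(n-2))$ is right, but it does not equal $(n-1)!/(2(n-2))$). With the correct value, the even-$m$ case of Lemma \ref{pi_0} together with the trivial bound $|D_i\cap N_{S_n}(M)|\leqslant |N_{S_n}(M)|$ gives
$$\frac{|N_G(M^m)\cap\Pi_{0,2}|}{|\Pi_{0,2}|}\leqslant\Bigl(\frac{|N_{S_n}(M)|}{|S_n|}\Bigr)^{m}\cdot\frac{|S_n|^2}{|D_1|\,|D_2|}=\frac{2(n-2)\cdot n}{|S_n:N_{S_n}(M)|^{m}},$$
which is exactly the stated constant $2n(n-2)$. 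There is no overshoot by a factor of $n$; the entire final paragraph of your proposal, where you try to recover that phantom factor via centralizer counts, Euler-$\varphi$ bounds and numbers of conjugacy classes, is solving a problem you created and never actually closes the argument. In the odd-$m$ case you make the analogous wrong first move, bounding $|C\cap N_{S_n}(M)|$ by $|C|$ (which, as you correctly notice, would require the false inequality $|S_n:N_{S_n}(M)|\leqslant 2n(n-2)$); the right move is again $|C\cap N_{S_n}(M)|\leqslant |N_{S_n}(M)|$, which converts the exponent $m-1$ into $m$ and leaves the factor $|S_n|/|C|=p(n-p)\leqslant n^2/4\leqslant 2n(n-2)$. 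This is precisely what the paper does.

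The first assertion of the lemma is handled correctly in spirit: both formulas for $|\Pi_{0,2}|$ are $|A_n|^m$ times a constant, and the comparison reduces to $2p(n-p)\leqslant n(n-2)$, which follows from $n/3<p<2n/3$ (so $p(n-p)<4n^2/9$, or simply $p(n-p)\leqslant n^2/4$) for $n\geqslant 30$; you should carry this out rather than declare it routine, but it is not where the difficulty lies. The conclusion is that your proposal identifies the correct ingredients (Lemma \ref{pi_0}, the explicit class sizes, the bound of an intersection with $N_{S_n}(M)$ by $|N_{S_n}(M)|$) but, as written, does not prove the stated inequality.
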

\begin{proof}
We have $|D_1|=n!/(2(n-2))$, $|D_2|=(n-1)!$ and $|C| = n!/(p(n-p))$.
$$|\Pi_{0,2}| = \left\{
\begin{array}{ll} 
2 |A_n|^{m-2} |D_1| |D_2| = |A_n|^{m} \cdot 4/(n(n-2)) & \mbox{if } m \mbox{ is even} \\
|A_n|^{m-1} |C| = |A_n|^{m} \cdot 4/(2p(n-p)) & \mbox{if } m \mbox{ is odd}.
\end{array} \right.$$
Note that $2p(n-p) < 2(2n/3)^2 \leqslant n(n-2)$ being $n/3 < p < 2n/3$ and $n \geqslant 30$. So the value of $|\Pi_{0,2} |$ is smallest when $m$ is even. We now prove the stated inequality. Since $\Pi_{0,2}$ is closed under conjugation, we may assume that $g=1$. By Lemma \ref{pi_0},
$$\frac{|N_G(M^m) \cap \Pi_{0,2} |}{|\Pi_{0,2}|} = \left\{
\begin{array}{ll} 
 \left( \frac{|N_{S_n}(M)|}{|S_n|} \right)^{m-2} \cdot \frac{|D_1 \cap N_{S_n}(M)| |D_2 \cap N_{S_n}(M)|}{|D_1||D_2|} & \mbox{if } m \mbox{ is even} \\
\left( \frac{| N_{S_n}(M)|}{|S_n|} \right)^{m-1} \cdot \frac{|C  \cap N_{S_n}(M)|}{|C|} & \mbox{if } m \mbox{ is odd}.
\end{array} \right.$$
The inequality in the statement follows by using the fact that the size of the intersection of any one of $D_1$, $D_2$, $C$ with $N_{S_n}(M)$ is at most $|N_{S_n}(M)|$.
\end{proof}

\begin{lemma} \label{a!b}
If $d$ is a divisor of $n$ such that $2 \leqslant d \leqslant n/2$ then $(n/d)!^d \cdot d! \leqslant 2(n/2)!^2$.
\end{lemma}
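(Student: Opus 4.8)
The plan is to show that among the divisors $d$ of $n$ with $2\leqslant d\leqslant n/2$, the quantity $(n/d)!^{d}\,d!$ is largest at $d=2$, where the claimed inequality is an equality. Set $a:=n/d$; since $d\leqslant n/2$ we have $a\geqslant 2$, and $n=ad$ is even (so if $d$ is odd then $a$ is even). I will use freely the elementary bound $k!\geqslant 2^{k-1}$ for integers $k\geqslant 1$, proved by a one-line induction. The argument divides into the cases $d$ even and $d$ odd.

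If $d=2c$ is even then $n/2=ca$, and after cancelling $(a!)^{2c}=(n/d)!^{d}$ from both sides, the statement becomes $(2c)!\leqslant 2\bigl((ca)!/(a!)^{c}\bigr)^{2}$. The key point is that $(ca)!/(a!)^{c}$ is non-decreasing in $a$: its value at $a+1$ divided by its value at $a$ equals $(ca+1)(ca+2)\cdots(ca+c)/(a+1)^{c}$, and each of the $c$ numerator factors is at least $a+1$. Hence $(ca)!/(a!)^{c}\geqslant (2c)!/2^{c}$, its value at $a=2$, and therefore $2\bigl((ca)!/(a!)^{c}\bigr)^{2}\geqslant ((2c)!)^{2}/2^{2c-1}\geqslant (2c)!$ by the bound $k!\geqslant 2^{k-1}$ with $k=2c$.

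If $d$ is odd then $a=2b$ with $b\geqslant 1$ and $n/2=bd$. Using $(2b)!=\binom{2b}{b}(b!)^{2}$ and the factorization $(bd)!=(b!)^{d}\prod_{j=1}^{d}\binom{jb}{b}$, cancelling $(b!)^{2d}$ reduces the statement to $\binom{2b}{b}^{d}d!\leqslant 2\bigl(\prod_{j=2}^{d}\binom{jb}{b}\bigr)^{2}$, the factor $j=1$ contributing $1$. I would then prove the estimate $\binom{jb}{b}\geqslant\tfrac{j}{2}\binom{2b}{b}$ for $j\geqslant 2$: since $\binom{jb}{b}/\binom{2b}{b}=\prod_{i=0}^{b-1}\tfrac{jb-i}{2b-i}$, each factor is at least $j/2$ because $j\geqslant 2$. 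Multiplying over $j=2,\dots,d$ gives $\prod_{j=2}^{d}\binom{jb}{b}\geqslant\binom{2b}{b}^{d-1}\tfrac{d!}{2^{d-1}}$, so it suffices to have $2\binom{2b}{b}^{\,d-2}d!\geqslant 4^{d-1}$; and since $d\geqslant 3$, $\binom{2b}{b}\geqslant 2$, and $d!\geqslant 2^{d-1}$, the left side is at least $2^{d-1}d!\geqslant 4^{d-1}$.

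The main obstacle is the calibration of these estimates: crude bounds such as $\binom{jb}{b}\geqslant j$ or $(bd)!\geqslant (b!)^{d}d!$ lose too much when $a$ and $d$ are both large, so one is forced to quantify precisely how far $\prod_{j}\binom{jb}{b}$ exceeds $\binom{2b}{b}^{\,d-1}$, and likewise to exploit the monotonicity of $(ca)!/(a!)^{c}$, in order to keep exactly the factor $2$ that the statement needs — a factor that cannot be dropped, since equality holds at $d=2$.
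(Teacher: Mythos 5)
Your proof is correct, but it takes a genuinely different route from the paper's. The paper first reduces to the case $d \leqslant n/d$ via the auxiliary inequality $a!^b\, b! > b!^a\, a!$ for $a > b \geqslant 2$, and in that case bounds $(n/d)!^d\, d!$ directly by a telescoping chain based on $d! \leqslant 2d^{d-2}$ and $r! \leqslant r^{r-1}$, absorbing the surplus power $(n/d)^{2(n/2-n/d)}$ into the factors of $\left((n/2)!/(n/d)!\right)^2$. You instead split on the parity of $d$, cancel $(n/d)!^d$ (resp.\ $(b!)^{2d}$) from both sides, and recast the inequality in terms of the multinomial-type quantities $(ca)!/(a!)^c$ and $\prod_{j}\binom{jb}{b}$, exploiting monotonicity in $a$ and the pointwise bound $\binom{jb}{b} \geqslant \tfrac{j}{2}\binom{2b}{b}$; I checked the key steps (the ratio $(ca+1)\cdots(ca+c)/(a+1)^c \geqslant 1$, the telescoping $(bd)! = (b!)^d\prod_{j=1}^d\binom{jb}{b}$, and the final reduction to $2\binom{2b}{b}^{d-2}d! \geqslant 4^{d-1}$) and they all hold. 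Your argument has the merit of locating the extremal case $d=2$ exactly and tracking the factor $2$ sharply throughout; the paper's is shorter once the swap lemma is in place and avoids the parity split, which in your version uses that $n$ is even --- harmless here, since $(n/2)!$ only makes sense for even $n$ and the application has $6 \mid n$.
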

\begin{proof}
We do as in the proof of \cite[Lemma 2.1]{Attila}. Assume first that $d \leqslant n/d$.
\begin{align*}
    (n/d)!^d \cdot d! & 
    \leqslant (n/d)!^{2} \cdot 2 \cdot ((n/d)! \cdot d)^{d - 2}
    \leqslant (n/d)!^{2} \cdot 2 \cdot ((n/d)! \cdot n/d)^{d - 2} \\
    & \leqslant (n/d)!^{2} \cdot 2 \cdot ((n/d)^{n/d})^{d - 2} 
    = (n/d)!^{2} \cdot 2 \cdot (n/d)^{2(n/2 - n/d)} \\
    & \leqslant (n/d)!^{2} \cdot ((n/d) +1)^{2} \cdot \ldots \cdot (n/2)^{2} \cdot 2 = 2 (n/2)!^{2},
\end{align*}
where, in the fourth inequality, we used that $r! \leqslant r^{r-1}$, for all $r \geqslant 2$.

Suppose now that $d > n/d$. Since $2 \leqslant n/d \leqslant n/2$, exchanging the role of $n/d$ and $d$ in the above inequality we obtain $d!^{n/d} \cdot (n/d)! \leqslant 2 (n/2)!^{2}$. If $a > b \geqslant 2$ are integers, then $a!^b \cdot b! > b!^a \cdot a!$, since
\begin{align*}
   a!^{b-1} & = \left( a \cdot (a-1) \cdot \ldots \cdot (b+1) \right)^{b-1} \cdot b!^{b-1} \geqslant \left( (b+1)^{(a-b)} \right)^{b-1} \cdot b!^{b-1} \\ & > b^{(a-b)(b-1)} \cdot b!^{b-1} \geqslant b!^{(a-b)} \cdot b!^{b-1} = b!^{a-1}.
\end{align*} 
Applying this to $a=d$, $b=n/d$ we have $(n/d)!^d \cdot d! < d!^{n/d} \cdot (n/d)! \leqslant 2 (n/2)!^{2}$. 
\end{proof}

\begin{lemma} \label{ordemnormalizador}
Let $H$ be a maximal subgroup of $S_n$ such that $H \notin \mathcal{F}$ and fix $i \in I$, $M \in \mathcal{E}_i$. Then either $|H| \leqslant |N_{S_n}(M)|$ or $H \cap B_i = \varnothing$.
\end{lemma}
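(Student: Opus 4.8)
The plan is to assume $H\cap B_i\neq\varnothing$ and deduce $|H|\leqslant|N_{S_n}(M)|$; if $H\cap B_i=\varnothing$ there is nothing to prove. Recall that $|N_{S_n}(M)|=i!(n-i)!$ when $i\geqslant 1$ and $|N_{S_n}(M)|=2(n/2)!^2$ when $i=-1$. Every element of $B_i$ is an odd permutation, so $B_i\cap A_n=\varnothing$, and in particular $H\neq A_n$. Since $|H|$, the condition ``$H\in\mathcal F$'' and the condition ``$H\cap B_i=\varnothing$'' are all invariant under conjugating $H$, the classification of the maximal subgroups of $S_n$ (see \cite{LPS}) lets us assume, after replacing $H$ by a conjugate, that $H$ is of one of three types: (a) primitive with $H\neq A_n,S_n$; (b) imprimitive, $H=S_a\wr S_b$ with $n=ab$ and $a,b\geqslant 2$; (c) intransitive, $H=S_k\times S_{n-k}$ with $1\leqslant k<n/2$. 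I would treat these in turn.

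In case (a) I would invoke Bochert's theorem, which gives $[S_n:H]\geqslant\lfloor(n+1)/2\rfloor!=(n/2)!$, so $|H|\leqslant n!/(n/2)!$. For $n\geqslant 30$ one has $(n/2)!>2^n>\binom{n}{n/2}$ (the first inequality holds at $n=30$ and propagates, since replacing $n$ by $n+2$ multiplies the two sides by $n/2+1$ and by $4$ respectively), hence $(n/2)!^3>n!$ and so $|H|<(n/2)!^2$. As $a\mapsto a!(n-a)!$ attains its minimum over $\{0,1,\ldots,n\}$ at $a=n/2$ and $i\neq n/2$, we obtain $|N_{S_n}(M)|\geqslant(n/2)!^2>|H|$, whether $i=-1$ or $i\geqslant 1$. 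In case (b), Lemma \ref{a!b} applied with $d=b$ (valid since $b\mid n$ and $2\leqslant b\leqslant n/2$, the latter because $a=n/b\geqslant 2$) yields $|H|=(n/b)!^b\cdot b!\leqslant 2(n/2)!^2$. For $i=-1$ this equals $|N_{S_n}(M)|$. For $i\geqslant 1$, since $i!(n-i)!$ decreases as $i$ grows on $\{1,\ldots,n/3-1\}$, it is enough to verify $2(n/2)!^2\leqslant(n/3-1)!(2n/3+1)!$; writing the quotient $(n/3-1)!(2n/3+1)!/(n/2)!^2$ as the product over $1\leqslant j\leqslant n/6+1$ of the fractions $(n/2+j)/(n/3+j-1)$, each of which exceeds $4/3$, bounds it below by $(4/3)^{n/6+1}\geqslant(4/3)^6>2$ for every $n\geqslant 30$ divisible by $6$.

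In case (c), if $i=-1$ then $H$, being intransitive, contains no $n$-cycle, so $H\cap B_{-1}=\varnothing$ and we are done. If $i\geqslant 1$, the assumption $H\notin\mathcal F$ forces $k\geqslant n/3$: otherwise $k\in\{1,\ldots,n/3-1\}$ and $H$ would lie in the conjugacy class $\mathcal F_k\subseteq\mathcal F$. Hence $i\leqslant n/3-1<n/3\leqslant k<n/2$, and by the strict monotonicity of $a\mapsto a!(n-a)!$ on $\{0,\ldots,n/2\}$ we conclude $|H|=k!(n-k)!<i!(n-i)!=|N_{S_n}(M)|$.

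The case distinction and the ``up to conjugacy'' reductions are routine once the maximal subgroups of $S_n$ are classified via \cite{LPS} and one recalls that each $\mathcal E_i$ and $\mathcal F_i$ is a single conjugacy class. The one genuinely delicate point is the elementary factorial inequality $2(n/2)!^2\leqslant(n/3-1)!(2n/3+1)!$ (together with its companion $(n/2)!^3>n!$), which must hold for \emph{every} $n\geqslant 30$ with $6\mid n$ rather than merely asymptotically; the pairing-of-factors estimate indicated in case (b) settles it uniformly.
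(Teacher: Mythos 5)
Your proof is correct and follows essentially the same route as the paper's: the O'Nan--Scott trichotomy for the maximal subgroups of $S_n$, the monotonicity of $a \mapsto a!\,(n-a)!$, and Lemma \ref{a!b} for the imprimitive case. The only substantive differences are that you invoke Bochert's bound $|S_n : H| \geqslant (n/2)!$ in the primitive case where the paper uses the Praeger--Saxl bound $|H| < 4^n$ from \cite{primitivo} (both suffice for $n \geqslant 30$), and that you supply an explicit pairing-of-factors verification of $2(n/2)!^2 \leqslant (n/3-1)!\,(2n/3+1)!$, which the paper asserts without proof.
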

\begin{proof}
By the O'Nan–Scott Theorem, the maximal subgroups of $S_n$ are of one of the following types: (1) primitive, (2) maximal intransitive, isomorphic to $S_k \times S_{n-k}$ for some $k \in \{1,\ldots,n/2-1\}$ and (3) maximal imprimitive, isomorphic to $S_a \wr S_b$ for $2 \leqslant a,b < n$ with $ab=n$.
If $H$ is intransitive then $H \cong S_k \times S_{n-k}$ with $n/3 \leqslant k < n/2$, therefore $|H| \leqslant (n/3)! (2n/3)! \leqslant (n/3 -1)! (2n/3 +1)! \leqslant |N_{S_n}(M)|$ if $N_{S_n}(M)$ is intransitive. On the other hand, if $N_{S_n}(M)$ is transitive, then $i=-1$ and $H \cap B_{-1} = \varnothing$.

Now suppose that $H$ is transitive. If $H$ is imprimitive then $|H| = (n/d)!^d \cdot d!$, where $d$ is a divisor of $n$, $d \neq 1, 2, n$. By Lemma \ref{a!b}, if $N_{S_n}(M)$ is imprimitive, then $|H| = (n/d)!^d \cdot d! \leqslant (n/2)!^{2} \cdot 2! = |N_{S_n}(M)|$. 
If $N_{S_n}(M)$ is intransitive, then $|H| = (n/d)!^d \cdot d! \leqslant (n/2)!^{2} \cdot 2! \leqslant (n/3 -1)! (2n/3 +1)! \leqslant |N_{S_n}(M)|$. If $H$ is primitive then either $H=A_n$, in which case $H \cap B_i = \varnothing$, or $H \neq A_n$, in which case $|H| < 4^n$ by \cite{primitivo}. Since $n \geqslant 30$ we have $4^n \leqslant (n/2)!^{2} \cdot 2 \leqslant (n/3 -1)! (2n/3 +1)!$ and the result follows.
\end{proof}

\begin{prop} \label{d < 1}
Let $H$ be a maximal subgroup of $G$ not in $\mathcal{C}$. Then $d(H) < 1$. 
\end{prop}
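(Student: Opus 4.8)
The plan is to split according to the type of the maximal subgroup $H$ of $G$ not lying in $\mathcal C$. Every maximal subgroup of $G$ containing the socle $N$ equals some $M_{0,r}$, and these all belong to $\mathcal C$, so we may assume $HN=G$; then $H$ is of product type or of diagonal type. Suppose first $H$ is of product type. Up to $N$-conjugacy we may take $H=N_G(M^m)$, where, by Lemma \ref{ncontido}, $H_0:=N_{S_n}(M)$ is a maximal subgroup of $S_n$ with $H_0A_n=S_n$, and where $H_0\notin\mathcal F$ since $H\notin\mathcal C$. Choosing $M_i\in\mathcal E_i$ as the representative of $\mathcal M_i$ and using $|B_i\cap N_{S_n}(M_i)|=|B_i|\cdot|N_{S_n}(M_i)|/|S_n|$, Lemmas \ref{pi_i} and \ref{pi_0} yield
$$d(H)=\sum_{i\in I}\left(\frac{|H_0|}{|N_{S_n}(M_i)|}\right)^{m-1}\frac{|B_i\cap H_0|}{|B_i\cap N_{S_n}(M_i)|}+\sum_{r}\left(\frac{|H_0|}{|S_n|}\right)^{m-r}\prod_{i=1}^{r}\frac{|D_i\cap H_0|}{|D_i|},$$
the second sum running over the prime divisors $r$ of $2m$, with the term $r=2$ replaced, when $m$ is odd, by $(|H_0|/|S_n|)^{m-1}|C\cap H_0|/|C|$.

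To bound the first sum I would distinguish the O'Nan--Scott type of $H_0$, as in Lemma \ref{ordemnormalizador}: a summand vanishes unless $B_i\cap H_0\ne\varnothing$, in which case Lemma \ref{ordemnormalizador} gives $|H_0|\le|N_{S_n}(M_i)|$, so the $(m-1)$-th power of the ratio is at most $1$. If $H_0\cong S_k\times S_{n-k}$ is intransitive, then $n/3\le k\le n/2-1$, the term $i=-1$ vanishes, and for the other indices each of the two long cycles of a $B_i$-element has length $>n/3$, so at most one lies in the $S_k$-orbit; a short combinatorial argument then shows $B_i\cap H_0\ne\varnothing$ forces a long part of $B_i$ to equal $k$ (so only a bounded number of indices $i$ occur), and distributing the three cycles over the two orbits gives $|B_i\cap H_0|\le 2|B_i|/\binom nk$, so the $i$-th summand is at most $2(\binom ni/\binom nk)^m$; the ratios $\binom ni/\binom nk$ decay roughly geometrically as $i$ decreases, with the largest of them (occurring at $k=n/3+1$) smaller than $1/3$, so the first sum stays below an absolute constant $<1$ for all $n\ge30$, $m\ge2$. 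If $H_0$ is transitive imprimitive, then $|H_0|\le2(n/2)!^2$ by Lemma \ref{a!b}, with strict inequality since the extremal case $d=2$ there corresponds to a subgroup of $\mathcal F$, and a short estimate makes $|H_0|/|N_{S_n}(M_i)|$ exponentially small in $n$ for every relevant $i$; with the crude bound $|B_i\cap H_0|\le|H_0|$ the first sum is negligible. If $H_0$ is primitive and $\ne A_n,S_n$, then $|H_0|<4^n$ by \cite{primitivo}, which is super-polynomially smaller than $|B_i\cap N_{S_n}(M_i)|$, so again the first sum is negligible.

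For the second sum in the product case: if $H_0$ is intransitive it contains no $n$-cycle and no $(n-2)$-cycle, so by Corollary \ref{pi_0r} every term with $r\ge3$ vanishes and the term $r=2$ vanishes too, except when $m$ is odd and $H_0$ has orbit sizes $p$ and $n-p$; in general the $\Pi_{0,2}$-term of $d(H)$ is bounded by $2n(n-2)/|S_n:H_0|^m$ via Lemma \ref{pi_2}, which is negligible since $|S_n:H_0|\ge\binom n{n/3}$. If $H_0$ is transitive the factor $(|H_0|/|S_n|)^{m-r}$ is negligible when $m>r$, and when $m=r$ one bounds each $|D_i\cap H_0|/|D_i|$ directly — for instance the number of $n$-cycles in $S_a\wr S_b$ is $(b-1)!\,(a-1)!\,a!^{b-1}$, super-polynomially smaller than $|D_i|=(n-1)!$. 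Finally, if $H$ is of diagonal type then $H\cap N=\Delta_\varphi\cong A_n^{m/t}$ for a prime $t\mid m$, and since each $\Pi_j$ lies in a single coset of $N$ we get $|H\cap\Pi_j|\le|H\cap N|=|A_n|^{m/t}$ for all $j\in J$; dividing by the values of $|M_i\cap\Pi_i|$ from Lemma \ref{pi_i} and of $|\Pi_{0,r}|$ gives terms bounded by $2^{m}c_i(n!)^{m/t-m}\binom ni^m$ with $c_i$ a centralizer order, and since $m/t-m\le-m/2$ and $(4e/n)^{n/2}<1$ for $n\ge30$, these beat $c_i$, so $d(H)$ is negligible.

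The main obstacle is the intransitive product-type case, the only one in which $|H_0|/|N_{S_n}(M_i)|$ is not exponentially small: one must compute exactly how a $B_i$-element splits its cycles over the two blocks of $S_k\times S_{n-k}$, keeping track of the several parity subcases in the definition of the classes $B_i$ and of the special classes $B_1,B_2$, and then check that the finitely many surviving summands, together with the small $\Pi_{0,r}$-contributions, add up to something strictly below $1$ for every admissible pair $(n,m)$.
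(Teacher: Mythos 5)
Your skeleton (product type versus diagonal type, the displayed formula for $d(H)$, and the identity $|B_i\cap N_{S_n}(M_i)|=|B_i|\,|N_{S_n}(M_i)|/|S_n|$ coming from the partition property of $\mathcal F$) is correct and matches the paper's setup. But the dominant contribution --- the sum $\sum_{i\in I}|B_i\cap H_0|/|B_i\cap N_{S_n}(M_i)|$ when $H_0\cong S_k\times S_{n-k}$ is intransitive with $n/3\leqslant k\leqslant n/2-1$ --- is exactly the step you leave unfinished, and you say so yourself in your closing paragraph. This is a genuine gap: the claims ``$|B_i\cap H_0|\leqslant 2|B_i|/\binom nk$'', ``only a bounded number of indices $i$ survive'', and ``the largest ratio is smaller than $1/3$'' all require the parity-by-parity analysis of how the three cycle lengths of a $B_i$-element can sum to $k$, including the exceptional classes $B_1,B_2$, and without it the conclusion $d(H)<1$ is not established. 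The paper closes this gap economically: having observed (your Lemma \ref{ordemnormalizador}) that $\left(|H_0|/|N_{S_n}(M_i)|\right)^{m-1}\leqslant 1$, it reduces to the case $m=1$ and imports the already-computed bound $\sum_{i\in I}|B_i\cap N_{S_n}(M)|/|B_i\cap N_{S_n}(M_i)|\leqslant\frac{3n^2+27n+54}{4n^2-9}<0.9925$ from Lemmas 5.4, 5.5, 5.9, 5.10 of \cite{Eric}, then checks that the $\Pi_{0,r}$-terms contribute less than the remaining $0.0075$. You should either cite that computation or actually carry out the case analysis you sketch; note that the margin at $n=30$ in the $m=1$ reduction is thin, so hand-waving is not safe here, although for $m\geqslant 2$ the $m$-th powers do give you much more room if you redo the estimate from scratch.

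Your treatment of the diagonal case, on the other hand, is a genuinely different and arguably cleaner route than the paper's. The paper bounds $|H\cap\Pi_j|\leqslant|H|=2m(n!/2)^{m/t}$ and divides by the lossy lower bound $|N_G(M_i^m)\cap\Pi_i|\geqslant(n/2)!^{2(m-1)}$; that estimate fails for $m=2$, forcing the paper into a separate ad hoc argument showing $H\cap\Pi_j=\varnothing$ when $m=2$. Your observation that each $\Pi_j$ lies in a single coset of $N$, so $|H\cap\Pi_j|\leqslant|H\cap N|=(n!/2)^{m/t}$, combined with the exact values of $|N_G(M_i^m)\cap\Pi_i|$ from Lemma \ref{pi_i}, yields terms of order $2^mc_i\left((4e/n)^{n/2}\right)^m$ that are uniformly negligible for all $m\geqslant 2$ and $n\geqslant 30$, with no special case needed. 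That part of your proposal is an improvement, provided you write out the centralizer orders $c_i$ and the Stirling estimates explicitly.
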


\begin{proof}
Assume $H$ has product type. Then $H$ is conjugate to $N_G(M^m)$ where $M$ is the intersection between $A_n$ and a maximal subgroup of $S_n$ not of the form $A_n$ nor $S_{n/2} \wr S_2$ nor $S_i \times S_{n-i}$, $i = 1, 2, \ldots, n/3 - 1$, so that $|N_{S_n}(M)| \leqslant (n/3)! \; (2n/3)!$ by Lemma \ref{a!b} and the fact that $2 (n/2)!^2 \leqslant (n/3)! \; (2n/3)!$ being $n \geqslant 30$. If $M$ is primitive, by \cite{primitivo} we have $|N_{S_n}(M)| < 4^n \leqslant 2 (n/2)!^2 \leqslant (n/3)! \; (2n/3)!$. Since $\Pi_j$ is closed under conjugation for all $j \in J$, we have $|H \cap \Pi_j| = |N_G(M^m) \cap \Pi_j|$ for all $j \in J$. We will use Stirling's inequalities, which are valid for all $k \geqslant 2$:
 $$\sqrt{2 \pi k} \; (k/e)^k \leqslant k! \leqslant  e \sqrt{k} \; (k/e)^k. $$
Assume that either $m$ is even or $r \neq 2$. By Lemma \ref{pi_0} and the fact that $\Pi_{0,r} \subseteq M_{0,r}$,
\begin{align*}
\displaystyle \frac{|H \cap \Pi_{0,r} |}{|M_{0,r} \cap \Pi_{0,r} |} 
& = \frac{ r \cdot \left( \frac{1}{2} \; | N_{S_n}(M)| \right)^{m-r}  \cdot \prod_{i = 1}^r |D_i \cap N_{S_n}(M) |}{r \cdot |A_n|^{m-r} \cdot \prod_{i = 1}^r |D_i|} \\
 & \leqslant  \left( \frac{|N_{S_n}(M)|}{|S_n|} \right)^{m-r} \cdot \prod_{i =1}^r \frac{|N_{S_n}(M) |}{|D_i|}
  =  \left( \frac{|N_{S_n}(M)|}{|S_n|} \right)^{m} \cdot 2(n-2)n^{r-1} \\
 & \leqslant  \left( \frac{(n/3)! \; (2n/3)!}{n!} \right)^{m} \cdot 2n^{r} 
  \leqslant 2 \cdot \left( \frac{2^{2/3}}{3} \right)^{nm} \cdot \left( \frac{n e^2 \sqrt{n}}{3 \sqrt{\pi}} \right)^{m}.
\end{align*}

By Lemma \ref{pi_2}, $|N_G(M^m) \cap \Pi_{0,2} |/|\Pi_{0,2}| \leqslant 2n(n-2)/|S_n:N_{S_n}(M)|^m$, then we have the above inequality also in the case $r = 2$ when $m$ is odd.

According to the proof of \cite[Lemmas 5.4, 5.5, 5.9, 5.10]{Eric}, the largest value of $\sum_{i \; \in \; I} \frac{|B_i \cap N_{S_n}(M)|}{|B_i \cap N_{S_n}(M_i)|}$ is obtained by substituting $n=30$ in the expression $\frac{3n^2+27n+54}{4n^2-9}$, so it is less than $0.9925$. By Lemma \ref{ordemnormalizador},
\begin{align*}
\displaystyle \sum_{i \; \in \; I} \frac{|H \cap \Pi_i |}{|N_G(M_i^m) \cap \Pi_i |} &= \displaystyle \sum_{i \; \in \; I} \frac{ \left( \frac{1}{2} \; |N_{S_n}(M)| \right)^{m-1} \cdot |B_i \cap N_{S_n}(M)|}{\left( \frac{1}{2} \; |N_{S_n}(M_i)| \right)^{m-1} \cdot |B_i \cap N_{S_n}(M_i)|} \\
& \leqslant \sum_{i \; \in \; I} \frac{|B_i \cap N_{S_n}(M)|}{|B_i \cap N_{S_n}(M_i)|} < 0.9925
\end{align*}
We obtain
\begin{align*}
d(H) & = \displaystyle \sum_{r \in P(2m)} \frac{|H \cap \Pi_{0,r} |}{|M_{0,r} \cap \Pi_{0,r} |} +  \sum_{i \; \in \; I} \frac{|H \cap \Pi_i |}{|N_G(M_i^m) \cap \Pi_i |} \\
& < 2m \cdot \left[ \left( \frac{2^{2/3}}{3} \right)^n \cdot \frac{ n e^2 \sqrt{n}}{3 \sqrt{\pi} } \right]^{m} + 0.9925
\end{align*}

This is less than $1$ being $n \geqslant 30$.

We now turn our attention to the maximal subgroups of $G$ of diagonal type and supplementing the socle $N$. Let $H$ be such a subgroup. Recall that $H \cap N = \Delta_{\varphi}$ has order $|A_n|^{m/t}$ where $t$ is a prime divisor of $m$. We have  
\begin{align*}
d(H) & = \displaystyle \sum_{r \in P(2m)} \frac{|H \cap \Pi_{0,r} |}{|M_{0,r} \cap \Pi_{0,r} |} +  \sum_{i \; \in \; I} \frac{|H \cap \Pi_i |}{|N_G(M_i^m) \cap \Pi_i |} \\ & \leqslant \displaystyle |H| \cdot \left( \sum_{r \in P(2m)} \frac{1}{|\Pi_{0,r} |} +  \sum_{i \; \in \; I} \frac{1}{|N_G(M_i^m) \cap \Pi_i |}\right).
\end{align*}
Since $HN=G$, we have $C_{2m} \cong G/N = HN/N \cong H/H \cap N$, hence 
$$|H| = |H:H \cap N| \cdot |H \cap N| = 2m \cdot |\Delta_\varphi| = 2m \cdot \left( n!/2 \right)^{m/t}.$$

Assume first that either $m$ is even or $r \neq 2$. Since $2 \leqslant r \leqslant m$,
\begin{align*}
|\Pi_{0,r}| &=  r \cdot |A_n|^{m-r} \cdot \prod_{i=1}^r |D_i| = r \cdot \left( \frac{n!}{2} \right)^{m-r} \cdot  \frac{n!}{n} \cdot \left( \frac{n!}{2(n-2)} \right)^{r-1} \\ 
& = \frac{r \cdot n!^m}{2^{m-1} n (n-2)^{r-1}} \geqslant \frac{2 \cdot n!^m}{2^{m-1} n^r} \geqslant \frac{n!^m}{2^{m-2} n^m}.
\end{align*}
By Lemma \ref{pi_2}, the smallest value of $|\Pi_{0,2} |$ is when $m$ is even, so the above inequality for $|\Pi_{0,r}|$ holds in all cases.

Fix $M_i \in \mathcal{E}_i$ for all $i \in I$. The smallest possible order of $N_{S_n}(M_i)$, $i \in I$, is when $M_i$ is imprimitive with two blocks, so $|N_{S_n}(M_i)| \geqslant 2 \left(n/2 \right)!^2$. Since $B_i \cap N_{S_n}(M_i) \neq \varnothing$, by Lemma \ref{pi_i} we have
\begin{align*}
    |N_G(M_i^m) \cap \Pi_i| & = \left( \frac{1}{2} \; |N_{S_n}(M_i)| \right)^{m-1} |B_i \cap N_{S_n}(M_i)|
    \geqslant \left(n/2\right)!^{2(m-1)}.
\end{align*}
We deduce that
\begin{align*}
    d(H) & \leqslant \displaystyle 2m \left( \frac{n!}{2} \right)^{m/t} \cdot \left( \sum_{r \in P(2m)} \frac{2^{m-2} n^m}{(n!)^m} +  \sum_{i \; \in \; I} \frac{1}{\left(n/2 \right)!^{2(m-1)}}\right) \\
    & \leqslant 2m \left( \frac{1}{2} (n/e)^n e \sqrt{n} \right)^{m/t} \left( 2^{m-1} \frac{m n^m}{(n/e)^{nm}}+\frac{n}{(n/(2e))^{n(m-1)}}\right) < 1
\end{align*}
for $m \geqslant 3$ and $n \geqslant 30$, where we used the fact that $t \geqslant 2$ and $t=3$ if $m=3$.

Now assume that $m=2$. We will show that $d(H)=0$ by proving that $H \cap \Pi_{0,2}$ and $H \cap \Pi_i$ are empty for all $i \in I$. We have $H=N_G(\Delta_{\varphi})$ where $\Delta_{\varphi} = \{ (\alpha, \alpha^{\varphi}) : \alpha \in A_n \}$ for $\varphi \in \Aut(A_n) \cong S_n$ and
$$\begin{array}{l}
\Pi_{0,2} = \{ (x_1, x_2) \gamma^2\ :\ x_1 \tau \in D_1,\ x_2 \tau \in D_2 \} \cup \{ (x_1, x_2) \gamma^2\ :\ x_1 \tau \in D_2,\ x_2 \tau \in D_1 \}, \\
\Pi_i = \{ (x_1,x_2) \gamma : x_1x_2 \tau \in B_i \}, \hspace{.5cm} i \in I.
\end{array}$$
 
 For $i \in I$ we have that if $(x_1,x_2)\gamma \in H \cap \Pi_i$ then $$(\alpha, \alpha^{\varphi})^{(x_1,x_2)\gamma} = (\alpha, \alpha^{\varphi})^{(x_1,x_2)(1, \tau)\delta} = (\alpha^{x_1}, \alpha^{\varphi x_2 \tau})^{\delta} = (\alpha^{\varphi x_2 \tau}, \alpha^{x_1}) \in \Delta_{\varphi},$$
 
So $\varphi x_2 \tau \varphi = x_1$, equivalently $(\varphi x_2 \tau)^2 = x_1x_2 \tau$ which is false since $(\varphi x_2 \tau)^2 \in A_n$ and $x_1x_2\tau \notin A_n$. Therefore $H \cap \Pi_i = \varnothing$ for all $i \in I$. 
 
 If $(x_1,x_2)\gamma^2 \in H \cap \Pi_{0,2}$ then, for all $\alpha \in A_n$, 
 $$ ( \alpha, \alpha^{\varphi})^{(x_1,x_2)\gamma^2} = (\alpha, \alpha^{\varphi})^{(x_1,x_2)(\tau, \tau)} =  (\alpha^{x_1 \tau}, \alpha^{\varphi x_2 \tau}) \in \Delta_{\varphi}. $$
So $x_1\tau \varphi = \varphi x_2 \tau$, i.e. $\varphi^{-1} x_1 \tau \varphi = x_2 \tau$. This is a contradiction because $x_1 \tau$ and $x_2 \tau$ are not conjugated in $S_n$ by definition of $\Pi_{0,2}$. Therefore $H \cap \Pi_{0,2} = \varnothing$.
\end{proof}

\section{Proof of Theorem 2} \label{section_main_2}

For the calculation of $\omega(G)$, we follow the same strategy used in \cite{MFA}. We use the following result that can be found in \cite{LocalLemma1}. The formulation we use is taken from \cite[Corollary 5.1.2]{LocalLemma} (the ``symmetric case''). Given an event $E$ of a probability space, we denote by $P(E)$ its probability and by $\overline{E}$ its complement. As usual $e$ denotes the base of the natural logarithm.

\begin{teor} [Lovász Local Lemma]
Let $E_1, E_2, \ldots, E_n$ be events in an arbitrary probability space. Let $(V, E)$ be a directed graph, where $V = \{ 1, \ldots, n \}$ is the set of vertices, and assume that,
for every $i \in V$ , the event $E_i$ is mutually independent of the set of events $E_j$ such that $(i, j) \notin E$. Let $d$ be the maximum valency of a vertex of the graph $(V, E)$. If for every $i \in V$ $$P(E_i) \leqslant \frac{1}{e(d+1)} $$ then $P \left( \bigcap_{i \in V} \overline{E_i} \right) > 0$.
\end{teor}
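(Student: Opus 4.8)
The plan is to prove the stronger asymmetric form of the lemma, from which the symmetric statement follows by a short computation. In the asymmetric form one assumes the existence of reals $x_1,\ldots,x_n \in [0,1)$ such that $P(E_i) \leqslant x_i \prod_{j:(i,j)\in E}(1-x_j)$ for every $i$, and concludes that $P\bigl(\bigcap_{i}\overline{E_i}\bigr) \geqslant \prod_i (1-x_i) > 0$. To recover the symmetric case I would set $x_i = 1/(d+1)$ for all $i$: then, writing $d_i \leqslant d$ for the number of out-neighbours of $i$, the required inequality reads $P(E_i) \leqslant \frac{1}{d+1}\bigl(1-\frac{1}{d+1}\bigr)^{d_i}$, and since $\bigl(1-\frac{1}{d+1}\bigr)^{d_i} \geqslant \bigl(\frac{d}{d+1}\bigr)^{d} \geqslant 1/e$ — the last bound being equivalent to the standard estimate $(1+\frac1d)^d \leqslant e$ — the hypothesis $P(E_i) \leqslant \frac{1}{e(d+1)}$ is exactly what is needed.

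The heart of the argument is an auxiliary claim proved by induction on $|S|$: for every index $i$ and every subset $S \subseteq \{1,\ldots,n\}$ with $i \notin S$, one has $P\bigl(E_i \mid \bigcap_{j\in S}\overline{E_j}\bigr) \leqslant x_i$. The base case $S=\varnothing$ is immediate from the hypothesis since $\prod_j(1-x_j)\leqslant 1$. For the inductive step I would split $S$ into the set $S_1$ of out-neighbours of $i$ lying in $S$ and the complementary set $S_2 = S\setminus S_1$, and write the conditional probability as the quotient
$$\frac{P\bigl(E_i \cap \bigcap_{j\in S_1}\overline{E_j} \,\big|\, \bigcap_{k\in S_2}\overline{E_k}\bigr)}{P\bigl(\bigcap_{j\in S_1}\overline{E_j} \,\big|\, \bigcap_{k\in S_2}\overline{E_k}\bigr)}.$$
The numerator is bounded above by $P\bigl(E_i \mid \bigcap_{k\in S_2}\overline{E_k}\bigr)$, which equals $P(E_i) \leqslant x_i\prod_{j:(i,j)\in E}(1-x_j)$ by mutual independence, because every $k\in S_2$ satisfies $(i,k)\notin E$. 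The denominator is expanded by the chain rule, enumerating $S_1 = \{j_1,\ldots,j_r\}$, as a product whose $\ell$-th factor is $1 - P\bigl(E_{j_\ell} \mid \cdots\bigr)$ with conditioning set $\{j_1,\ldots,j_{\ell-1}\}\cup S_2$, a set not containing $j_\ell$ and of size strictly less than $|S|$; the inductive hypothesis therefore bounds each factor below by $1-x_{j_\ell}$, giving denominator $\geqslant \prod_{j\in S_1}(1-x_j) \geqslant \prod_{j:(i,j)\in E}(1-x_j)$. Dividing, the product over neighbours cancels and the ratio is at most $x_i$, completing the induction.

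With the claim in hand, the conclusion follows by telescoping along the natural order of the indices:
$$P\Bigl(\bigcap_{i}\overline{E_i}\Bigr) = \prod_{i=1}^n \Bigl(1 - P\bigl(E_i \mid \textstyle\bigcap_{j<i}\overline{E_j}\bigr)\Bigr) \geqslant \prod_{i=1}^n(1-x_i),$$
which is strictly positive since each $x_i < 1$. Specializing to $x_i = 1/(d+1)$ as above then yields precisely the stated symmetric form.

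The main obstacle is the correct handling of mutual independence in the numerator of the inductive step: the hypothesis guarantees that $E_i$ is independent of the \emph{family} $\{E_k : (i,k)\notin E\}$, and one must use that this entails independence of $E_i$ from the single compound event $\bigcap_{k\in S_2}\overline{E_k}$, so that conditioning on it leaves $P(E_i)$ unchanged. Everything else is bookkeeping — verifying that each conditioning set arising in the denominator is small enough for the induction to apply, and checking the elementary inequality $\bigl(d/(d+1)\bigr)^d \geqslant 1/e$ used in the final specialization.
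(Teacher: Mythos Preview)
Your argument is the standard proof of the Lov\'asz Local Lemma via the asymmetric form, and it is correct; the inductive claim, the numerator/denominator split, and the specialization $x_i = 1/(d+1)$ with the bound $(1-\tfrac{1}{d+1})^d \geqslant 1/e$ are all handled properly. One tiny point you leave implicit is that the conditional probabilities appearing throughout are well defined, i.e.\ that $P\bigl(\bigcap_{j\in T}\overline{E_j}\bigr)>0$ for each conditioning set $T$ that arises; this follows by the same induction (each factor in the telescoping product is at least $1-x_j>0$), so it is not a gap but is worth stating.

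As for comparison with the paper: the paper does \emph{not} prove this theorem at all. It quotes the symmetric Local Lemma from Alon--Spencer \cite{LocalLemma} (originating in \cite{LocalLemma1}), records the meaning of the mutual-independence hypothesis, and then applies the lemma as a black box to the generating-graph problem. So you have supplied a proof where the authors supply a citation; your argument is exactly the classical one found in the cited reference.
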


The mutual independence condition mentioned in the Lovász Local Lemma means the following:
$$ P \left( E_i | \bigcap_{j \in S} \overline{E_j} \right) = P(E_i), $$
for all $i \in V$ and for all subset $S$ of $\{ j \in V : (i, j) \notin E \}$.

Define $$\mathcal{N} = \{ N_G(M \times M^{a_2} \times \ldots \times M^{a_m}) \ :\ M \in \mathscr{F} \}, $$ 
where $\mathscr{F}$ is the family of maximal imprimitive subgroups of $A_n$ with $2$ blocks, $(S_{n/2} \wr S_2) \cap A_n$, and $a_2, \ldots, a_m \in A_n$. Note that if $H \in \mathcal{N}$ then $H$ is conjugate to $N_G(M^m)$ in $G$, for some $M \in \mathscr{F}$. The subgroups of $G$ contained in $\mathcal{N}$ are maximal in $G$ by \cite[Proposition 1.1.44]{Spagnoli} and  \cite{LPS}.

Let $B$ be the set of $n$-cycles in $S_n$ and let $\Pi$ be the set of elements of $G$ of the form $(x_1,\ldots,x_m) \gamma$ with the property that $x_1 \ldots x_m \tau \in B$. Note that these sets are precisely what are called $B_{-1}$ and $\Pi_{-1}$ in Section \ref{section_main_1}. 

\begin{lemma} \label{piclass}
    $\Pi$ is a conjugacy class of $G$.
\end{lemma}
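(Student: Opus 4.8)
The plan is to show that $\Pi$ is a single conjugacy class of $G$ by exhibiting it as the set of $G$-conjugates of a fixed element, and for this I would follow the template already used in the proof of Proposition \ref{classedeconj_i}. First, $\Pi$ is nonempty and closed under conjugation: nonemptiness is clear since $n$-cycles form a nonempty conjugacy class of $S_n$, and closure under conjugation is the special case $i=-1$ of Proposition \ref{classedeconj_i}. So it remains to prove transitivity of the conjugation action on $\Pi$, i.e. that any two elements $(x_1,\ldots,x_m)\gamma$ and $(x_1',\ldots,x_m')\gamma$ with $x_1\cdots x_m\tau$ and $x_1'\cdots x_m'\tau$ both $n$-cycles are $G$-conjugate.

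The key computation is the one already recorded in the proof of Proposition \ref{classedeconj_i}: conjugating $(x_1,\ldots,x_m)\gamma$ by $(y_1,\ldots,y_m)\in A_n^m$ produces an element of $\Pi$ whose associated odd permutation is $(x_1\cdots x_m\tau)^{y_1}$; and conjugating by $\gamma$ produces an element whose associated odd permutation is a conjugate of $x_1\cdots x_m\tau$. So my strategy is: given $(x_1,\ldots,x_m)\gamma\in\Pi$, first use conjugation by a suitable element of $A_n^m$ to reduce to an element of the form $(z,1,\ldots,1)\gamma$ where $z\tau$ is an $n$-cycle — this is possible because, writing $w:=x_1\cdots x_m\tau$, I can pick $y_2,\ldots,y_m$ so that $x_iy_{i+1}=y_i$ recursively (setting $y_1=1$, $y_{i+1}=x_i^{-1}y_i$, which keeps each $y_j\in A_n$ since the $x_i\in A_n$), and then the last coordinate becomes $y_m^{-1}x_m\tau\cdot\tau\cdot\tau = $ (after bookkeeping) the single odd factor carrying all the content, conjugated appropriately. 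Concretely the normal form has first $m-1$ coordinates trivial and last coordinate an odd permutation $z\tau$ with $z\tau$ conjugate to $w$, hence still an $n$-cycle; one further conjugation by $\gamma^{m-1}$ (or by a permutation of coordinates implemented by powers of $\gamma$ together with an $A_n^m$-element) moves the odd entry into the first slot. Then, since all $n$-cycles in $S_n$ that are odd — note $n$ is even here, so $n$-cycles are odd — are conjugate by an element of $A_n$ (the centralizer of an $n$-cycle in $S_n$ is the cyclic group it generates, which is not contained in $A_n$, so the $A_n$-class equals the $S_n$-class), I can conjugate the first coordinate by an element of $A_n$ to bring $z$ to any prescribed value. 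This shows every element of $\Pi$ is $G$-conjugate to one fixed element $(z_0,1,\ldots,1)\gamma$, completing the proof.

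The main obstacle, and the point requiring care, is the parity/centralizer bookkeeping: I must make sure that at each reduction step the conjugating tuple genuinely lies in $A_n^m$ (not merely $S_n^m$), and that the final "any two odd $n$-cycles are $A_n$-conjugate" claim is justified — this uses that $n$ is even (so an $n$-cycle is an odd permutation, and $B=B_{-1}$ really consists of odd elements, consistent with $B_i\cap A_n=\varnothing$) together with the fact that $C_{S_n}(\sigma)=\langle\sigma\rangle\not\subseteq A_n$ for an $n$-cycle $\sigma$, so its $S_n$-conjugacy class does not split in $A_n$. A clean way to organize this is to first prove the auxiliary statement: for $w,w'$ odd permutations of $S_n$ lying in the same $S_n$-class whose $S_n$-centralizer is not contained in $A_n$, and any choice of $(x_1,\ldots,x_m)\gamma,(x_1',\ldots,x_m')\gamma$ with $x_1\cdots x_m\tau=w$, $x_1'\cdots x_m'\tau=w'$, the two elements are $G$-conjugate; then $\Pi$-transitivity is the instance $w,w'$ both $n$-cycles. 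I expect roughly the same amount of computation as in Proposition \ref{classedeconj_i}, reusing verbatim the conjugation formulas displayed there.
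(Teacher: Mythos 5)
Your proposal is correct and follows essentially the same route as the paper: both reduce an arbitrary element of $\Pi$ to the fixed representative $(z,1,\ldots,1)\gamma$ by explicit conjugation with a tuple in $A_n^m$, and both rest on the fact that the $S_n$-class of (odd) $n$-cycles does not split in $A_n$ because the centralizer of an $n$-cycle contains an odd permutation. The paper performs the reduction in one step by taking $y_1=a$ and $y_i=x_i\cdots x_m\tau a\tau$, whereas you normalize in stages; only minor bookkeeping in your outline needs fixing (e.g.\ a single conjugation by $\gamma$, not $\gamma^{m-1}$, moves the nontrivial entry from the last slot to the first).
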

\begin{proof}
Note that $B \nsubseteq A_n$, so there is $z \in A_n$ such that $z \tau \in B$. It follows that $\pi := (z, 1, \ldots, 1)\gamma \in \Pi$. We prove that $\Pi$ is the conjugacy class of $\pi$ in $G$. Let $(x_1, \ldots, x_m)\gamma \in \Pi$, we will prove that this element is conjugate to $\pi$ in $G$. There exists $a \in S_n$ with $(x_1 \ldots x_m \tau)^a = z \tau$. If $a \not \in A_n$, then $b= x_1 \ldots x_m \tau a \in A_n$ and $(x_1 \ldots x_m \tau)^b = (x_1 \ldots x_m \tau)^a$, so we may assume that $a \in A_n$. Set $y_1 := a$ and $y_i := x_i \ldots x_m \tau a \tau$ for $i = 2,\ldots,m$. Then $((x_1, \ldots , x_m) \gamma)^{(y_1, \ldots , y_m)}$ equals
\begin{align*}
(y_1^{-1} x_1 y_2, y_2^{-1} x_2 y_3 , \ldots, y_{m-1}^{-1} x_{m-1} y_m , y_m^{-1} x_m \tau y_1 \tau) \gamma = (z,1,\ldots,1) \gamma = \pi.
\end{align*}
This concludes the proof.
\end{proof}

For $H \in \mathcal{N}$ and $K \leqslant G$, define 
$$C(H) = \Pi \cap H, \hspace{1cm} f_H(K) = \frac{ | C(H) \cap K |}{ |C(H)| }.$$
Let $g \in G$ be such that $H = (N_G(M^m))^g$. By Lemmas \ref{pi_i} and \ref{piclass}, 
\begin{align*}
| C(H) | & =  |H \cap \Pi | = | (N_G(M^m))^g \cap \Pi | = | N_G(M^m) \cap \Pi | \\
& = \left( \frac{1}{2} \; |N_{S_n}(M)| \right)^{m-1} \cdot |B \cap N_{S_n}(M)| = 2/n \cdot \left( n/2 \right)!^{2m}.
\end{align*}
Since $H$ is a non-normal maximal subgroup of $G$, it is self-normalizing. Since $\mathcal{N}$ is the conjugacy class of $H$ in $G$,
\begin{align*}
    l=|\mathcal{N}| = |G:H| = \frac{(n!/2)^m \cdot 2m}{(n/2)!^{2m} \cdot 2m} = \frac{1}{2^m} \binom{n}{n/2}^m < 2^{m(n-1)}.
\end{align*}
Define the graph $\Gamma$ whose vertices are the two-element subsets $v = \{H_1, H_2 \}$ of $\mathcal{N}$, with $H_1 \neq H_2$. There is an edge between two vertices $v$ and $w$ if $v \cap w \neq \varnothing$. Every vertex of $\Gamma$ has valency $d = 2(l-2) < 2^{m(n-1)+1}$. Choose $g_H \in C(H)$ uniformly and independently, for all $H \in \mathcal{N}$, and let $E_v$ be the event $\langle g_{H_1}, g_{H_2} \rangle \neq G$, equivalently $\langle g_{H_1}, g_{H_2} \rangle$ is contained in a maximal subgroup of $G$. It is easy to see that the mutual independence condition is satisfied (see also \cite[Section 3]{MFA}). Our aim is to prove that $P(E_v) \leqslant 1/(e(d+1))$ for every vertex $v$ of $\Gamma$. If this is true, then the Local Lemma implies that there exists a choice of $g_H$ in each $C(H)$, $H \in \mathcal{N}$, with the property that $\langle g_{H_1},g_{H_2} \rangle = G$ for all $H_1 \neq H_2$ in $\mathcal{N}$, therefore these elements form a clique of the generating graph of $G$, in other words $\omega(G) \geqslant |\mathcal{N}|$. This, together with \cite[Theorem 1 (3)]{Martino}, gives the claim of Theorem \ref{main_2}.

In the following discussion we will talk about the various types of maximal subgroups of $G$, which we described in Section \ref{section_main_1}.

Let $\mathcal{M}_1$ be the family of maximal intransitive subgroups of $S_n$, $\mathcal{M}_2$ the family of primitive maximal subgroups of $S_n$ different from $A_n$, $\mathcal{M}_j$ the family of maximal imprimitive subgroups of $S_n$ with $j$ blocks for $j \in \{3,4\}$, $\mathcal{M}_5$ the family of maximal imprimitive subgroups of $S_n$ with at least $5$ blocks. Let $\mathcal{H}$ be the family of all maximal subgroups of $G$ not in $\mathcal{N}$ and $J = \{1, 2, 3, 4, 5, 6 \}$. We write $\mathcal{H}$ as the union $\mathcal{H}_1 \cup \ldots \cup \mathcal{H}_6$ where the $\mathcal{H}_j$'s are defined as follows. For $j$ with $1 \leqslant j \leqslant 5$, $\mathcal{H}_j$ is the subset of $\mathcal{H}$ consisting of subgroups of the form $N_G(M \times M^{a_2} \times \ldots \times M^{a_m})$, where $a_2,\ldots,a_m \in A_n$, $N_{S_n}(M) \in \mathcal{M}_j$ and $N_{S_n}(M) \cap A_n = M$. $\mathcal{H}_6$ is the family of maximal subgroups of $G$ of diagonal type.

Fix a vertex $v = \{ H_1, H_2 \}$ of $\Gamma$. For $j \in J$, let $E_{v,j}$ be the probability that $\langle g_{H_1}, g_{H_2} \rangle$ is contained in a member of $\mathcal{H}_j$. We clearly have
$$ P(E_v) \leqslant \sum_{j \in J} P(E_{v, j}).$$
Let $[H]$ be the conjugacy class in $G$ of a subgroup $H$ of $G$ and $m_{H_i}([H])$ the number of different conjugates of $H$ that contain a fixed element of $C(H_i)$, $i = 1,2$. This is well defined by Lemma \ref{piclass}. In the following sum, $[H]$ varies in the set of conjugacy classes of elements of $\mathcal{H}_j$.
Arguing as in \cite{MFA} we have, for $j \in J$,
\begin{align*}
    P(E_{v,j}) & \leqslant \sum_{[H]} m_{H_1}([H]) \max_{K \in [H]} (f_{H_2}(K)).
\end{align*}
Let $c_{v,j}$ the number of conjugacy classes of subgroups in $\mathcal{H}_j$ such that there exists $H$ in such a class such that $H \cap C(H_1) \neq \varnothing$ and $H \cap C(H_2) \neq \varnothing$. We deduce that
\begin{align} \label{ineq_cvj} \tag{$\star$}
    P(E_{v, j}) \leqslant c_{v,j} \cdot \min_{\{ i_1, i_2 \} = \{ 1, 2 \}} \left( \max_{H \in \mathcal{H}_j, K \in [H]} (m_{H_{i_1}}([H]) \cdot f_{H_{i_2}}(K) ) \right) .
\end{align}
Let $s_{v,j}$ be the number of subgroups $H$ in $\mathcal{H}_j$ such that $H \cap C(H_1) \neq \varnothing$ and $H \cap C(H_2) \neq \varnothing$. Then
\begin{align} \label{ineq_svj} \tag{$\star \star$}
    P(E_{v,j}) \leqslant \sum_{H \in \mathcal{H}_j} f_{H_1}(H) f_{H_2}(H) \leqslant s_{v,j} \cdot \max_{H \in \mathcal{H}_j}(f_{H_1}(H) \cdot f_{H_2}(H)) .
\end{align}
\begin{lemma} \label{cvj}
Let $v = \{ H_1, H_2 \}$ be a vertex of $\Gamma$. Then $c_{v,2} \leqslant n$ for large enough $n$, $c_{v,j} \leqslant 1$ for $j \in \{ 3, 4 \}$, $c_{v,5} \leqslant 2 \sqrt{n}$ and $c_{v,6} \leqslant m \cdot 2^m$.
\end{lemma}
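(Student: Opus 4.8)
To bound the four quantities $c_{v,2}, c_{v,3}, c_{v,4}, c_{v,5}, c_{v,6}$, I would fix the vertex $v = \{H_1, H_2\}$ and an element $\pi_i \in C(H_i)$ for $i = 1, 2$, where $\pi_i = (x_1^{(i)}, \ldots, x_m^{(i)})\gamma$ with $x_1^{(i)} \cdots x_m^{(i)}\tau$ an $n$-cycle. The key structural fact I would rely on, analogous to the argument in \cite{MFA}, is that if a maximal subgroup $H = N_G(M \times M^{a_2} \times \cdots \times M^{a_m})$ of product type contains $\pi_i$, then running through the computation exactly as in Proposition \ref{unico} (multiplying the coset conditions cyclically) forces $x_1^{(i)} \cdots x_m^{(i)}\tau$ to lie in a conjugate of $N_{S_n}(M)$; hence $N_{S_n}(M)$ is a maximal subgroup of $S_n$ containing an $n$-cycle. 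So for $j \in \{1, \ldots, 5\}$, counting conjugacy classes of subgroups in $\mathcal{H}_j$ meeting both $C(H_1)$ and $C(H_2)$ reduces to counting conjugacy classes of maximal subgroups in $\mathcal{M}_j$ that contain a given $n$-cycle — indeed, once $N_{S_n}(M)$ is pinned down up to conjugacy, the $A_n^m$-conjugacy class of $H$ is determined by the element $\pi_i$ itself (as in Proposition \ref{M_i}).

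\textbf{Counting by type.} For $j = 2$ (primitive, not $A_n$): a classical fact (see the references around \cite{primitivo}) is that the number of conjugacy classes of primitive maximal subgroups of $S_n$ containing a fixed $n$-cycle is $O(n)$ — in fact one invokes the classification of primitive groups containing an $n$-cycle (Jones, building on Feit and others), which shows such groups are very restricted, yielding $c_{v,2} \leqslant n$ for $n$ large. For $j \in \{3, 4\}$ (imprimitive with exactly $3$, resp. $4$, blocks): an $n$-cycle lies in $S_{n/d} \wr S_d$ only if, acting on the $d$ blocks, it induces a $d$-cycle, and the block system is then essentially unique once $d$ is fixed (the blocks are the orbits of the unique subgroup of index $d$ in the cyclic group generated by the $n$-cycle's action); hence there is at most one conjugacy class, so $c_{v,j} \leqslant 1$. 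For $j = 5$ (at least $5$ blocks): the number of divisors $d \geqslant 5$ of $n$ is at most $\tfrac12 \tau(n)$, and a crude bound $\tau(n) \leqslant 4\sqrt{n}$ (or any standard divisor bound) gives $c_{v,5} \leqslant 2\sqrt{n}$, again using that for each admissible $d$ the block system compatible with the $n$-cycle is unique. For $j = 6$ (diagonal type): a maximal subgroup of diagonal type is determined by a prime divisor $t$ of $m$ (at most $\alpha(m) \leqslant \log_2 m < m$ choices, but we can be generous) together with the partition of $\{1,\ldots,m\}$ into $m/t$ blocks of size $t$ compatible with $\delta$ and the tuple of automorphisms $\varphi_{i,j}$; the number of relevant conjugacy classes is bounded by $m$ (for $t$) times $2^m$ (a generous bound for the combinatorial data), giving $c_{v,6} \leqslant m \cdot 2^m$.

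\textbf{Main obstacle.} The genuinely delicate case is $j = 2$: making the bound $c_{v,2} \leqslant n$ precise requires the classification of primitive permutation groups containing an $n$-cycle, and then organizing the finitely many families (affine groups of prime degree, $\mathrm{P\Gamma L}$-type groups on projective lines, the sporadic small-degree exceptions, $A_n$ and $S_n$ themselves which are excluded) into conjugacy classes and checking the count is $O(n)$ uniformly; one must also be careful that distinct conjugacy classes of such $M \leqslant A_n$ can merge or split when passed to $S_n$ and then to $G$. The cases $j \in \{3,4,5\}$ hinge on the uniqueness of the block system refining the cycle structure of an $n$-cycle, which is elementary but should be stated carefully: if $\sigma$ is an $n$-cycle preserving a partition into $d$ blocks, the blocks are precisely the cosets of the unique order-$n/d$ subgroup of $\langle \sigma \rangle$ in its regular action, hence unique. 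The case $j = 6$ is purely bookkeeping and poses no real difficulty beyond choosing a clean generous bound. I would present the proof type by type, using the reduction to $n$-cycle-containing maximal subgroups of $S_n$ as the common first step, then dispatching $j = 3, 4, 5, 6$ quickly and spending the bulk of the argument on $j = 2$.
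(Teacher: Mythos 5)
Your overall strategy is sound and, for $j\in\{2,3,4,5\}$, it reaches the stated bounds by a route that is genuinely different from the paper's. The paper does not use the $n$-cycle condition at all in these cases: it bounds $c_{v,j}$ by the total number of $G$-conjugacy classes of subgroups in $\mathcal{H}_j$, reduces this (via the observation that $A_n$-conjugate cores give $G$-conjugate normalizers) to the number of $S_n$-classes of maximal subgroups in $\mathcal{M}_j$, and quotes \cite[Lemma 5]{MFA} for the values $n$, $1$, $1$, $2\sqrt{n}$. Your reduction to maximal subgroups of $S_n$ containing a fixed $n$-cycle, followed by the classification of primitive groups containing an $n$-cycle for $j=2$, is a legitimate alternative (and would in fact yield a much better bound than $n$ there), at the cost of a heavier classification input; note, however, that for $j\in\{3,4\}$ the bound $c_{v,j}\leqslant 1$ needs no $n$-cycle argument at all, since the stabilizers of partitions into $d$ blocks of equal size form a single $S_n$-conjugacy class --- your block-system uniqueness argument is really the one needed for $s_{v,4}\leqslant 1$ (Lemma \ref{sv4}), not for $c_{v,4}$. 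A small slip: the claim that the divisors $d\geqslant 5$ of $n$ number at most $\tfrac{1}{2}\tau(n)$ is false in general (try $n=60$); just use $\tau(n)\leqslant 2\sqrt{n}$ directly.

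The one genuine gap is the case $j=6$. You bound the ``combinatorial data'' of a diagonal subgroup --- which includes the tuple of automorphisms $\varphi_{i,j}\in\Aut(A_n)$ --- by $2^m$, but the number of such tuples is of order $|S_n|^{(t-1)m/t}$, vastly more than $2^m$; what is at most $2^m$ is the number of conjugacy classes for each fixed prime $t\mid m$, and that requires an argument you have not supplied. The paper's argument is: since $n>6$ we have $\Aut(A_n)\cong S_n$, so any two diagonal subgroups of the same shape are conjugate under $S_n^m$ inside $S_n^m\rtimes\langle\delta\rangle$; by the standard inequality $u_X\leqslant |Y:X|\cdot u_Y$ comparing orbit numbers of a subgroup $X\leqslant Y$, the number of $A_n^m$-classes (hence of $G$-classes) of such subgroups is at most $|S_n^m:A_n^m|=2^m$, and multiplying by the number of prime divisors of $m$ (at most $m$) gives $c_{v,6}\leqslant m\cdot 2^m$. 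Without some version of this conjugacy statement your factor $2^m$ is unjustified.
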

The bound $c_{v,2} \leqslant n$ depends on the classification of finite simple groups.
\begin{proof}
Note that $c_{v,j}$ is less than or equal to the number of conjugacy classes of subgroups in $\mathcal{H}_j$. Also, if $H \in \mathcal{H}$ then we can write $H=N_G(H \cap N)$ and this allows to reduce to counting $G$-conjugacy classes of subgroups of the form $H \cap N$ in $N$. 
Also note that if $M$ and $L$ are conjugate in $A_n$, then $N_G(M^m)$ and $N_G(L^m)$ are conjugate in $G$ by an element of the form $(c,c,\ldots,c) \in A_n^m$ such that $M^c=L$. Therefore, for $j$ with $1 \leqslant j \leqslant 5$, the number of conjugacy classes of subgroups in $\mathcal{H}_j$ is less than or equal to the number of conjugacy classes of subgroups of $S_n$ belonging to $\mathcal{M}_j$. Therefore, for $j \neq 6$, we can use the bounds for $c_{v,j}$ calculated in \cite[Lemma 5]{MFA}. In other words $c_{v,2} \leqslant n$ for large enough $n$, $c_{v,j} \leqslant 1$ for $j \in \{ 3, 4 \}$ and $c_{v,5} \leqslant 2 \sqrt{n}$.

It remains to bound $c_{v,6}$. We will use the fact that if $X \leqslant Y$ are finite groups with $Y$ acting on a finite set $\Omega$, then denoting by $u_X$ the number of $X$-orbits and by $u_Y$ the number of $Y$-orbits of this action, we have $u_Y  \leqslant u_X \leqslant |Y:X| \cdot u_Y$. Since $n$ is larger than $6$, $\Aut(A_n) \cong S_n$, therefore any two isomorphic diagonal subgroups $\Delta_{\varphi_1}$, $\Delta_{\varphi_2}$ of the socle $N=A_n^m$ are conjugate in the group $S_n^m \rtimes \langle \delta \rangle$, which contains $G$, via an element of $S_n^m$. It follows that the number of $G$-classes of isomorphic diagonal subgroups is at most the number of $A_n^m$-classes, which is at most $|S_n:A_n|^m=2^m$. We know that the number of isomorphism classes of diagonal subgroups equals the number of prime divisors of $m$ (see Section \ref{section_main_1}). Therefore $c_{v,6} \leqslant m \cdot 2^m$.
\end{proof}

\begin{lemma} \label{sv4}
Let $v$ be a vertex of $\Gamma$ and assume that $4$ divides $n$. Then $s_{v,4} \leqslant 1$.
\end{lemma}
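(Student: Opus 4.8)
The plan is to reduce the statement to an elementary divisibility fact, exploiting the hypothesis $4\mid n$ the same way it is used for $m=1$ in \cite{MFA}. Two reductions come first. Since $\Pi$ is a conjugacy class (Lemma \ref{piclass}), $\mathcal{H}_4$ is a union of conjugacy classes, and $C(H^g)=C(H)^g$ for $g\in G$, the number $s_{v,4}$ depends only on the $G$-conjugacy class of $v=\{H_1,H_2\}$; and since every member of $\mathcal{N}$ is $G$-conjugate to $N_G(L_1^m)$, I may assume $H_1=N_G(L_1^m)$. Write $P$ for the partition into two blocks of size $n/2$ attached to $L_1$, so all $m$ coordinate block systems of $H_1$ equal $P$; let $P_1',\ldots,P_m'$ be the coordinate block systems of $H_2$, and note that $H_1\ne H_2$ forces $P_j'\ne P$ for some $j$.

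Next I describe, for $\pi=(x_1,\ldots,x_m)\gamma\in\Pi$, the members of $\mathcal{H}_4$ containing $\pi$. Put $\sigma:=x_1\cdots x_m\tau$, an $n$-cycle. Since $4\mid n$, the permutation $\sigma^4$ is a product of four cycles of length $n/4$, so $\sigma$ normalizes exactly one partition into four blocks of size $n/4$, namely the orbit partition of $\sigma^4$, and this partition refines the orbit partition of $\sigma^2$, the unique partition into two blocks of size $n/2$ normalized by $\sigma$. Arguing as in the proof of Proposition \ref{unico}, if $\pi$ normalizes $M\times M^{a_2}\times\cdots\times M^{a_m}$ with $N_{S_n}(M)\in\mathcal{M}_4$, then multiplying the relations $a_ix_ia_{i+1}^{-1}\in N_{S_n}(M)$ (indices cyclic, $x_m$ replaced by $x_m\tau$) starting from the $i$-th one gives $\sigma^{x_1\cdots x_{i-1}}\in N_{S_n}(M)^{a_i}$, which forces $M$ to be the intersection with $A_n$ of the stabilizer of the orbit partition of $\sigma^4$, and $M^{a_i}=M^{x_1\cdots x_{i-1}}$. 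Hence there is a unique $H=\Psi(\pi)$ in $\mathcal{H}_4$ containing $\pi$, and its $i$-th coordinate block system is the orbit partition of $(\sigma^{x_1\cdots x_{i-1}})^4$.

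Suppose now $H\in\mathcal{H}_4$ meets $C(H_1)$, say $\pi=(x_1,\ldots,x_m)\gamma\in H\cap C(H_1)$. As in the proof of Lemma \ref{pi_i}, the condition $\pi\in H_1=N_G(L_1^m)$ forces $x_1,\ldots,x_{m-1}\in L_1$ and $x_m\tau\in N_{S_n}(L_1)$, so each $x_1\cdots x_{i-1}$, and hence each $\sigma^{x_1\cdots x_{i-1}}$, lies in $N_{S_n}(L_1)$; consequently the orbit partition of $(\sigma^{x_1\cdots x_{i-1}})^2$ is $P$ and the $i$-th coordinate block system of $H$, the orbit partition of $(\sigma^{x_1\cdots x_{i-1}})^4$, refines $P$. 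Thus every coordinate block system of $H$ refines the corresponding one of $H_1$, and (by the same argument) of $H_2$. Now the divisibility fact: if two partitions of $\{1,\ldots,n\}$ into two blocks of size $n/2$ are distinct, their common refinement has blocks of sizes $a,b,b,a$ with $a+b=n/2$ and $a,b\geqslant1$, and a partition into four blocks of size $n/4$ refines it only if $n/4\mid a$ and $n/4\mid b$, i.e. $a=b=n/4$, in which case that refinement equals the common refinement. Applying this at coordinate $j$ to $P$ and $P_j'$: either no $H\in\mathcal{H}_4$ meets both $C(H_1)$ and $C(H_2)$, so $s_{v,4}=0$, or the $j$-th coordinate block system of any such $H$ is forced to be the common refinement of $P$ and $P_j'$.

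It remains to promote ``determined at coordinate $j$'' to ``$H$ determined'', and this is the step I expect to be the main obstacle — it is the only place where the extra coordinates, absent when $m=1$, really matter. The tool is the rigidity from the second paragraph: for any $\pi=(x_1,\ldots,x_m)\gamma$ realizing $H\in\Psi(C(H_1))$ the tuple $(x_1,\ldots,x_m)$ cyclically conjugates the successive coordinate block systems of $H$ while simultaneously fixing those of $H_1$ (all equal to $P$), and an analogous tuple does so for $H_2$, transporting $P_i'$ to $P_{i+1}'$; since at every coordinate $i$ with $P\ne P_i'$ the block system of $H$ is already forced (being the unique common four-block refinement of the two coarsenings $P$ and $P_i'$ of that block system), one tracks these conjugations around the $m$ coordinates to conclude that the full tuple of coordinate block systems of $H$ is determined by $H_1$ and $H_2$. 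Hence any two members of $\mathcal{H}_4$ meeting both $C(H_1)$ and $C(H_2)$ coincide, so $s_{v,4}\leqslant1$.
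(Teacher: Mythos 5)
Your core argument coincides with the paper's: for each coordinate $j$ and each $i\in\{1,2\}$ you extract from an element of $H\cap C(H_i)$ the $n$-cycle $x_j\cdots x_m\tau x_1\cdots x_{j-1}$ lying in $B$ and normalizing both the $j$-th factor of $H_i\cap N$ and the $j$-th factor of $H\cap N$, and you then use the fact (this is exactly where $4\mid n$ enters) that a partition into four blocks of size $n/4$ which refines two \emph{distinct} partitions into two blocks of size $n/2$ must equal their common refinement. The paper does precisely this and then appeals to the proof of Lemma 5 of \cite{MFA} to conclude that each coordinate factor of $H\cap N$ is determined.

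The gap is the one you flag yourself: this only pins down the $j$-th coordinate block system of $H$ at those coordinates where the block systems of $H_1$ and $H_2$ differ, and your final paragraph does not close the remaining case. ``Tracking the conjugations around the $m$ coordinates'' does not work as stated: the element $x_j$ that carries the $j$-th block system of $H$ to the $(j+1)$-st is read off from a specific element of $H\cap C(H_1)$ and therefore depends on $H$; the only $H$-independent constraint on $x_j$ is that it carries $P$ to $P$ (respectively $P_j'$ to $P_{j+1}'$), and the stabilizer of a $2$-block partition acts transitively on its refinements into four blocks of size $n/4$, so transporting a block system that is determined at a neighbouring coordinate does not determine it at a coordinate where $P=P_j'$. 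As written, your proposal therefore proves only that the block system of $H$ is forced at the coordinates where $H_1$ and $H_2$ disagree. It is worth noting that the paper's own proof reduces to exactly the same per-coordinate nonemptiness statement and delegates the conclusion to \cite[Proof of Lemma 5]{MFA}, where $m=1$ and distinctness of the two $2$-block partitions is automatic from $H_1\neq H_2$; so you have isolated a genuine subtlety of the case $m\geqslant 2$, but you have not resolved it, and the last paragraph needs to be replaced by an actual argument (or the conclusion weakened to a polynomial bound on $s_{v,4}$, which would still suffice for the estimate of $P(E_{v,4})$).
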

\begin{proof}

Let $v = \{ H_1, H_2 \}$ and let $H \in \mathcal{H}_4$. Write 
$$\begin{array}{lll}
H = N_G(R^{b_1} \times \ldots \times R^{b_m}) \in \mathcal{H}_4, & &
H_i = N_G(M_i^{a_{i1}} \times \ldots \times M_i^{a_{im}}) \in \mathcal{N},
\end{array}$$
for $i=1,2$, where each $a_{ij}$ and each $b_j$ belongs to $A_n$, $N_{S_n}(M_i)$ is a maximal imprimitive subgroup of $S_n$ with $2$ blocks for $i=1,2$ and $N_{S_n}(R)$ is a maximal imprimitive subgroup of $S_n$ with $4$ blocks. Suppose that $H \cap C(H_i) = H \cap \Pi \cap H_i \neq \varnothing$ for $i = 1, 2$. We need to show that $H$ is uniquely determined by these conditions, in other words, that each $R^{b_j}$ is uniquely determined. By \cite[Proof of Lemma 5]{MFA}, it is enough to prove that $B \cap N_{S_n}(M_i^{a_{ij}}) \cap N_{S_n}(R^{b_j}) \neq \varnothing$ for $i=1,2$ and for $j=1,\ldots,m$.

Fix $i \in \{1,2\}$ and let $h = (x_1, \ldots, x_m) \gamma \in H \cap C(H_i) = H \cap H_i \cap \Pi$. Since $h \in \Pi$, by definition $x_1 \ldots x_m \tau \in B$. On the other hand, being $h \in H$, $R^{b_1} \times \ldots \times R^{b_m}$ equals
\begin{align*}
    (R^{b_1} \times \ldots \times R^{b_m})^{(x_1, \ldots, x_m)\gamma}
    = R^{b_m x_m \tau} \times R^{b_1 x_1} \times R^{b_2 x_2} \times \ldots R^{b_{m-1} x_{m-1}} .
\end{align*}
We deduce that $b_m x_m \tau b_1^{-1} \in N_{S_n}(R)$ and $b_j x_j b_{j+1}^{-1} \in N_{S_n}(R)$ for $j=1,\ldots,m-1$. Fix $j \in \{1,\ldots,m\}$. Multiplying everything starting from the $j$-th term, we have 
$$b_j x_j x_{j+1} \ldots x_m \tau x_1 x_2 \ldots x_{j-1} b_j^{-1} \in N_{S_n}(R).$$
It follows that the element $x := x_j x_{j+1} \ldots x_m \tau x_1 x_2 \ldots x_{j-1}$ belongs to $N_{S_n}(R^{b_j})$. Since $h \in H_i$, the same argument shows that $x$ belongs to $N_{S_n}(M_i^{a_{ij}})$. Furthermore
$$x= (x_j x_{j+1} \cdots x_m \tau) \cdot x_1 \cdots x_m \tau \cdot (x_j x_{j+1} \cdots x_m \tau)^{-1},$$
so $x$ belongs to $B$. Therefore $x \in B \cap N_{S_n}(M_i^{a_{ij}}) \cap N_{S_n}(R^{b_j})$.
\end{proof}

\begin{lemma} \label{conjnm}
Let $L \leqslant G$ and $g \in \Pi$, then the number of conjugates of $L$ containing $g$ is at most $nm$.
\end{lemma}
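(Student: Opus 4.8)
The plan is to use the standard orbit-counting identity for the number of conjugates of a subgroup that contain a fixed element. First I would fix $L \leqslant G$ and $g \in \Pi$ and note that the map $x \mapsto L^x$ has all fibres of size $|N_G(L)|$, so the number of distinct conjugates $L^x$ with $g \in L^x$ equals $|\{x \in G : g \in L^x\}|/|N_G(L)|$. Then I would rewrite the condition $g \in L^x$ as $g^{x^{-1}} \in L$ and substitute $z = x^{-1}$, reducing the count to $|\{z \in G : g^z \in L\}|$. Since $\Pi$ is a single conjugacy class of $G$ by Lemma \ref{piclass} and $g \in \Pi$, we have $g^G = \Pi$, so every $g^z$ lies in $\Pi$; hence $\{z : g^z \in L\}$ is exactly the preimage of the set $\Pi \cap L$ under the conjugation map $z \mapsto g^z$, which has fibres of size $|C_G(g)|$ by orbit--stabilizer. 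Therefore the number of conjugates of $L$ containing $g$ is
\[
\frac{|\Pi \cap L| \cdot |C_G(g)|}{|N_G(L)|}.
\]

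Next I would bound the two small factors. Since $L$ is normal in $N_G(L)$ and $\Pi \cap L \subseteq L$, we have $|\Pi \cap L| \leqslant |L| \leqslant |N_G(L)|$, so the displayed quantity is at most $|C_G(g)| = |G|/|\Pi|$. It then remains to compute $|\Pi|$. Writing the defining relation as $x_m = x_{m-1}^{-1}\cdots x_1^{-1}\, b\, \tau^{-1}$ with $b$ ranging over the $n$-cycles $B$, one checks that once $x_1,\ldots,x_{m-1}$ are chosen freely in $A_n$ the element $x_m$ is automatically in $A_n$ (as $n$ is even, an $n$-cycle is an odd permutation, so $b\tau^{-1}$ is even), and distinct $b$ give distinct $x_m$; hence $|\Pi| = |A_n|^{m-1}(n-1)! = (n!/2)^{m-1}(n-1)!$. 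Combining this with $|G| = 2m\,(n!/2)^m$ yields $|C_G(g)| = |G|/|\Pi| = nm$, and the bound follows.

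There is no real obstacle here; the whole argument is a short counting computation. The only points needing a little care are the invocation of Lemma \ref{piclass} to identify $g^G$ with $\Pi$ (which is what makes the fibre-counting valid and gives $|C_G(g)| = |G|/|\Pi|$), and the parity check that makes the enumeration of $|\Pi|$ come out to exactly $(n!/2)^{m-1}(n-1)!$ rather than half of that. Everything else is formal.
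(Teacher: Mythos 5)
Your proof is correct and follows essentially the same route as the paper's: both arguments reduce the count to $|G:N_G(L)|\cdot|L\cap\Pi|/|\Pi|$ (you via orbit--stabilizer on the conjugation map, the paper via double counting of pairs $(h,H)$ with $h\in H\cap\Pi$), bound it by $|G|/|\Pi|$ using $|L\cap\Pi|\leqslant|L|\leqslant|N_G(L)|$, and evaluate $|\Pi|=(n-1)!\,(n!/2)^{m-1}$ to obtain $nm$.
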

\begin{proof}
We argue as in the proof of \cite[Lemma 4]{Blackburn}. Let $a(L)$ the number of conjugates of $L$ containing $g$. Note that $a(L)$ does not depend on $g$ because $\Pi$ is a conjugacy class in $G$. Consider the set $R$ of pairs $(h, H)$ such that $h \in H \cap \Pi$ and $H$ is conjugated to $L$ in $G$. On the one hand, since $\Pi$ is a conjugacy class of $G$, $|R| = |\Pi| \cdot a(L)$. On the other hand, since $L$ has $|G : N_G(L)|$ conjugates in $G$ and $|L^g \cap \Pi|=|L \cap \Pi|$ for all $g \in G$, $|R| = |G : N_G(L)| \cdot |L \cap \Pi| \leqslant |G : L| \cdot |L| = |G|$. Therefore $|\Pi| \cdot a(L) \leqslant |G|$ hence
$$a(L) \leqslant \frac{|G|}{|\Pi|} = \frac{2m \cdot (n!/2)^m}{(n-1)! \cdot (n!/2)^{m-1}} = n m .$$
This concludes the proof.
\end{proof}
Fix a vertex $v=\{H_1,H_2\}$ of $\Gamma$ and let $i \in \{1,2\}$, $H:=H_i$. By Lemma \ref{conjnm}, $m_{H}([K]) \leqslant n m$ for all $K \leqslant G$. We now bound $f_{H}(K)=|C(H) \cap K|/|C(H)|$ for $K \in \mathcal{H}_j$ and $P(E_{v,j})$ for $j = 1, \ldots, 6$. Since $\Pi$ is closed under conjugation, when bounding $f_H(K)$ we may assume that $H = N_G(L^m)$ where $L$ is a maximal imprimitive subgroup of $A_n$ with $2$ blocks. As in Section \ref{section_main_1}, we will use Stirling's inequalities. By Lemma \ref{pi_i}, $C(H) = H \cap \Pi$ has size $(2/n) \cdot (n/2)!^{2m} \geqslant (2/n) (n/(2e))^{nm}$.

\begin{enumerate}
    \item [(1)] Case $j=1$. Let $K \in \mathcal{H}_1$ be a conjugate of $N_G(M^m)$ in $G$, where $M$ is a maximal intransitive subgroup of $A_n$. Notice that $K \cap \Pi = \varnothing$ by Lemma \ref{pi_i}, because $N_{S_n}(M)$ is intransitive and hence it does not contain $n$-cycles. Therefore $f_{H}(K)=0$, implying that $P(E_{v,1})=0$.     
    
     \item [(2)] Case $j=2$. Assume $K$ is a maximal subgroup of $G$ conjugate to $N_G(M^m)$ where $M^m=K \cap N$, $M$ is the intersection between $A_n$ and a primitive maximal subgroup of $S_n$ distinct from $A_n$. Since $|M| \leqslant 4^n$ by \cite{primitivo}, $KN=G$ and $K \cap N$ is conjugate to $M^m$, we have $|C(H) \cap K| \leqslant |K|=2m \cdot |M|^m \leqslant 2m \cdot 4^{mn}$. Therefore, by Inequality (\ref{ineq_cvj}) and Lemmas \ref{cvj}, \ref{conjnm},
\begin{align*}
    P(E_{v, 2}) & \leqslant n \cdot mn \cdot \frac{mn \cdot 4^{mn}}{(n/(2e))^{mn}} 
    = m^2n^3 \cdot \left( \frac{8 e}{n} \right)^{n m} .
\end{align*}

    \item [(3)] Case $j=3$. Assume $K = N_G(M \times M^{a_2} \times \ldots \times M^{a_m})$, $M$ is a maximal imprimitive subgroup of $A_n$ with $3$ blocks, and $a_2, \ldots, a_m \in A_n$. We will bound the size of $C(H) \cap K$.
    Let $g \in C(H) \cap K = H \cap \Pi \cap K$, then $g = (x_1, \ldots, x_m)\gamma$, where $x_1 \ldots x_m\tau \in B$, and the fact that $g \in H \cap K$ implies that $x_1, \ldots, x_{m-1}, x_m \tau \in N_{S_n}(L)$, $a_i x_i a_{i+1}^{-1} \in N_{S_n}(M)$ for $i=1,\ldots,m-1$, where $a_1=1$, and $a_m x_m \tau \in N_{S_n}(M)$. We deduce that
$$\begin{array}{l}
x_1 \ldots x_i \in N_{S_n}(L) \cap N_{S_n}(M) a_{i+1} \hspace{1cm} \forall i=1,\ldots,m-1, \\
 x_1 \ldots x_m\tau \in B \cap N_{S_n}(L) \cap N_{S_n}(M)
 \end{array}$$
By induction, the number of choices for $x_i$ is $|N_{S_n}(L) \cap N_{S_n}(M)a_{i+1}|$, which is at most $|N_{S_n}(L) \cap N_{S_n}(M)|$, for every $i=1,\ldots,m-1$. Moreover, after choosing $x_1,\ldots,x_{m-1}$, the number of choices for $x_m$ is $|B \cap N_{S_n}(L) \cap N_{S_n}(M)|$, which is at most $|N_{S_n}(L) \cap N_{S_n}(M)|$. Therefore
\begin{align*}
|C(H) \cap K| \leqslant |N_{S_n}(L) \cap N_{S_n}(M)|^{m}.
\end{align*}
The above discussion implies that, if $B \cap N_{S_n}(L) \cap N_{S_n}(M)$ is empty, then $f_H(K)=0$, so now we may assume that there is an element $\sigma \in B \cap N_{S_n}(L) \cap N_{S_n}(M)$. Then $\sigma$ is an $n$-cycle normalizing $L$ and $M$. Let $\Delta$ and $\overline{\Delta}$ be the blocks of $L$, i.e. the two orbits of $\langle \sigma^2 \rangle$, and let $B_1$, $B_2$, $B_3$ be the blocks of $M$, i.e. the three orbits of $\langle \sigma^3 \rangle$. Then the six orbits of $\langle \sigma^6 \rangle$ are $\Delta \cap B_i$, $i=1,2,3$, and $\overline{\Delta} \cap B_i$, $i=1,2,3$, forming a partition $P$ of $\{1,\ldots,n\}$ consisting of $6$ blocks of size $n/6$. Clearly, $N_{S_n}(L) \cap N_{S_n}(M)$ is contained in the stabilizer of the partition $P$, which is isomorphic to $S_{n/6} \wr S_6$, hence
\begin{align*}
f_H(K) & = \frac{|C(H) \cap K|}{|C(H)|} \leqslant \frac{|N_{S_n}(L) \cap N_{S_n}(M)|^{m}}{|C(H)|}
\leqslant \frac{n}{2} \cdot \left( \frac{(n/6)!^6 \cdot 6!}{(n/2)!^2} \right)^m.
\end{align*}
An easy application of Stirling's inequalities shows that this is at most 
$n^{O(1)m} (1/3)^{n m}$.
By Inequality (\ref{ineq_cvj}) and Lemmas \ref{cvj}, \ref{conjnm}, the same bound holds for $P(E_{v,3})$.

    \item [(4)] Case $j=4$. Assume $K$ is a maximal subgroup of $G$ conjugate to $N_G(M^m)$ where $K \cap N = M^m$ and $M$ is a maximal imprimitive subgroup of $A_n$ with $4$ blocks. Since $KN=G$ and $K \cap N$ is conjugate to $M^m$, $|K| = 2m \cdot |M|^m$, hence an application of Stirling's inequalities gives
\begin{align*}
    f_H(K) & \leqslant \frac{|K|}{|C(H)|} = \frac{2m \cdot ((n/4)!^4 \cdot 4!)^m}{2/n \cdot (n/2)!^{2m}}
    \leqslant n^{O(1)m} \cdot \left( \frac{1}{2} \right)^{n m}.
\end{align*}
Therefore, by Inequality (\ref{ineq_svj}) and Lemma \ref{sv4}, $P(E_{v,4}) \leqslant n^{O(1)m} (1/4)^{n m}$.

 \item [(5)] Case $j=5$. Assume $K$ is a maximal subgroup of $G$ conjugate to $N_G(M^m)$ where $K \cap N = M^m$ and $M$ is a maximal imprimitive subgroup of $A_n$ with $5$ or more blocks. By \cite[Theorem 3]{Blackburn}, $|M| \leqslant n^{O(1)} \cdot (n/(5e))^{n}$, and since $|K| = 2m \cdot |M|^m$,
\begin{align*}
    f_H(K) &  \leqslant \frac{2m \cdot ((n/(5e))^{n} \cdot n^{O(1)})^m}{2/n \cdot (n/(2e))^{nm}} 
    \leqslant n^{O(1)m} \cdot \left( \frac{2}{5} \right)^{n m}.
\end{align*}
 
By Inequality (\ref{ineq_cvj}) and Lemmas \ref{cvj}, \ref{conjnm}, the same bound holds for $P(E_{v,5})$.
     
     \item [(6)] Case $j=6$. Assume $K = N_G(\Delta_{\varphi})$ is a maximal subgroup of $G$ of diagonal type, so that $|K|=2m \cdot |A_n|^{m/t}$ where $t$ is a prime divisor of $m$. Using $t \geqslant 2$ and Stirling's inequalities,
     \begin{align*}
    f_{H}(K) & \leqslant \frac{|K|}{|C(H)|} = \frac{2m (n!/2)^{m/t}}{(2/n) (n/2)!^{2m}}
     \leqslant n^{O(1)m} \cdot \left( \frac{2\sqrt{e}}{\sqrt{n}} \right)^{mn}.
\end{align*}
     By Inequality (\ref{ineq_cvj}) and Lemmas \ref{cvj}, \ref{conjnm}, the same bound holds for $P(E_{v,6})$.

\end{enumerate}

We now finish the proof by showing that $P(E_v) \leqslant \frac{1}{e(d+1)}$ for sufficiently large $n$. Recall that $d \leqslant 2^{mn}$. The above discussion implies that $P(E_{v,j}) \leqslant n^{O(1)m} (2/5)^{nm}$ for all $j=1,\ldots,6$, and since $P(E_v) \leqslant \sum_{j=1}^6 P(E_{v,j})$, it all boils down to showing that $n^{O(1)m} (2/5)^{mn} \leqslant (1/2)^{mn}$, which is true for large enough $n$.

\end{document}